\newcommand{\ep}{\varepsilon}
\newcommand{\DD}{\mathbb{D}}
\newtheorem{theorem}{Theorem}[section]
\newtheorem{Def}[theorem]{Definition}
\newtheorem{notation}[theorem]{Notation}
\newtheorem{thm}[theorem]{Theorem}
\newtheorem{proposition}[theorem]{Proposition}
\newtheorem{prop}[theorem]{Proposition}
\newtheorem{cor}[theorem]{Corollary}
\newtheorem{lemma}[theorem]{Lemma}
\newtheorem{example}[theorem]{Example}
\theoremstyle{remark}
\newtheorem{remark}[theorem]{Remark}
\newtheorem{hyp}[theorem]{Hypothesis}
\numberwithin{equation}{section}
\newcommand{\id}{\text{Id}}
\def\R{\mathbb{R}}
\def\RR{\mathbb{R}}
\def\NN{\mathbb{N}}
\def\mE{\mathbb{E}}
\def\EE{\mathbb{E}}
\def\bfw{{\bf w}}
\def\bfx{{\bf x}}
\newcommand{\ca}{{\mathcal A}}
\newcommand{\cb}{{\mathcal B}}
\newcommand{\ce}{{\mathcal E}}
\newcommand{\cf}{{\mathcal F}}
\newcommand{\cg}{{\mathcal G}}
\newcommand{\ch}{{\mathcal H}}
\newcommand{\ck}{{\mathcal K}}
\newcommand{\cl}{{\mathcal L}}
\newcommand{\cm}{{\mathcal M}}
\newcommand{\cs}{{\mathcal S}}
\newcommand{\cv}{{\mathcal V}}
\def\si{\sigma}
\def\al{{\alpha}}
\def\be{{\beta}}
\def\ga{{\gamma}}
\newcommand{\lcl}{\left\{}
\newcommand{\rcl}{\right\}}
\newcommand{\lp}{\left(}
\newcommand{\rp}{\right)}
\newcommand{\lc}{\left[}
\newcommand{\rc}{\right]}
\def \eref#1{\hbox{(\ref{#1})}}
\def\tvr{\text{-var}}
\def\ll{\llbracket}
\def\rr{\rrbracket}
\def \eref#1{\hbox{(\ref{#1})}}
\begin{document}
\title[Uniform bounds for Euler scheme]
{Euler scheme for SDEs driven 
 by   fractional Brownian motions: 
  Malliavin differentiability and uniform upper-bound estimates}
\date{}   

\author[J. A. Le\'on]{Jorge A. Le\'on}
\address{J. A. Le\'on: Departamento de Control Autom\'atico, Cinvestav-IPN, Mexico}
\email{jleon@ctrl.cinvestav.mx}

\author[Y. Liu]{Yanghui Liu}
\address{Y. Liu: Baruch College, CUNY, New York}
\email{yanghui.liu@baruch.cuny.edu}

\author[S. Tindel]{Samy Tindel}
\address{S. Tindel: Department of Mathematics, 
Purdue University, West Lafayette}
\email{stindel@purdue.edu}

\begin{abstract} 
The Malliavin differentiability of a  SDE plays a   crucial role  in the study of density smoothness and ergodicity  among others. For Gaussian driven SDEs the differentiability  issue is   solved essentially in \cite{CLL}. 
  In this paper, we consider the Malliavin differentiability   for the Euler scheme of such SDEs. We will focus  on SDEs driven by   fractional Brownian motions (fBm), which is a very natural class of Gaussian processes. 
 We derive a uniform (in    step size $n$)  path-wise upper-bound estimate for    the Euler scheme for stochastic differential equations driven by fBm with Hurst parameter $H>1/3$ and its Malliavin derivatives.    
\end{abstract}

\keywords{Rough paths, Discrete sewing lemma,  Fractional Brownian motion,  Stochastic differential equations,  Euler scheme,      Asymptotic error distributions. }

\maketitle

{
}

\section{Introduction}

 In this paper we are interested in    the following stochastic  differential equation driven by a $d$-dimensional fractional Brownian motion (fBm in the sequel) $x$ with Hurst parameter $ \frac13 <   H < \frac12$:
 \begin{eqnarray}\label{e1.1}
 dy_{t}&=&V_{0}(y_{t})dt + {V}(y_{t}) d x_{t}\,, \quad t\in [0,T],
\\
y_{0}&=&a \in \RR^{m}.
\nonumber
\end{eqnarray}
Throughout the paper we assume that the collection of vector fields $V_{0} = (V_{0}^{i}, {1\leq i \leq m})  \in C^{3}_{b} (\RR^{m}, \RR^{m})  $ and ${V} = (V^{i}_{j}, {1\leq i\leq m, 1\leq j\leq d})$ all sit in the class $C^{3}_{b} (\RR^{m}, \cl (\RR^{d}, \RR^{m})) $.
 The existence and uniqueness of   path-wise solution of equation \eqref{e1.1} 
is guaranteed by   the theory of rough paths; see e.g. \cite{FV}. In addition, the unique solution $y$ in the   sense of \cite{FV} has $\ga$-H\"older continuity for all $\ga<H$. 

The aim of this paper is to consider the numerical approximation of equation \eqref{e1.1}. 
It is well-known (see the introduction in~\cite{DNT} for more details about this issue) that the classical Euler scheme is divergent under this setting.  The simplest possible solution to this problem is to use a second-order Euler (that is a Milstein type) scheme,   which  however      involves   iterated integrals of the fBm $x$ and  is not implementable directly.  
Several contributions are made to   tackle the implementation issue  \cite{DNT, FR, HLN1, LT}; see also \cite{HLN,HLN2}. 

In this paper we will focus our attention on the  (implementable) Euler scheme introduced in~\cite{HLN1,LT}. 
Take  the uniform partition $\pi: 0=t_{0}<t_{1}<\cdots<t_{n}=T$ on $[0,T]$, where  for $k=0,\ldots,n$ we have $t_{k} =  k \Delta$  with $\Delta = \frac{T}{n}$. 
The Euler scheme 
is recursively defined as follows:
\begin{equation}\label{e4}
y^{n}_{t_{k+1}} = y^{n}_{t_{k}} + V_{0}(y^{n}_{t_{k}}) \Delta+ V(y^{n}_{t_{k}}) \delta x_{t_{k}t_{k+1}} 
+\frac12 \sum_{ j=1}^{d}\partial V_{j} V_{j} (y^{n}_{t_{k}}) \, \Delta^{2H} 
\end{equation}
and
$y^{n}_{0}=y_{0}$, 
where we have used the notation   
\begin{eqnarray}\label{not:iterated-vector-field} 
\partial V_{i} V_{j} =
\lp
 \sum_{\ell=1}^{m} \partial_{\ell} V_{i}^{k} V_{j}^{\ell}; \, k=1,\dots, m
 \rp 
\end{eqnarray}
 and $\partial_{i}$ stands for the partial derivative in the $y^{(i)} $ direction: $\partial_{i} = \frac{\partial }{\partial {y^{(i)} } }$. 
 The exact rate of convergence of $y^{n}$ to $y$ is shown to be of order $1/n^{2H-1/2}$ in~\cite{LT}.

In this paper, we are   interested in proving that the approximation $y^{n}$ defined by \eqref{e4} is Malliavin differentiable under sufficient smoothness assumption on  the coefficients. More importantly, we will establish pathwise upper bounds estimates of the Malliavin derivative which will be uniform in $n$. Our motivation for this endeavor is twofold:

\begin{enumerate}[wide, labelwidth=!, labelindent=0pt, label=\textbf{(\roman*)}]
\setlength\itemsep{.1in}
\item 
The integrability of Malliavin derivatives for rough differential equations has been an important open problem a decade ago. This is mostly  due to the prominent role played by Malliavin calculus techniques in obtaining results about the density of random variables like $y_{t}$ in~\eqref{e1.1}. The integrability issue for the Malliavin derivatives $Dy_{t}$ has been solved 
completely in~\cite{CLL}. Subsequent applications to the smoothness of densities of $y_{t}$ are contained in~\cite{bau,CHLT, geng}. The corresponding question for numerical approximations of $y$ is thus in order. We propose to start a detailed answer to this natural problem in the current paper.

\item
Upper bounds on Malliavin derivatives open the way to important results for numerical schemes. Among others, one can quote weak convergence as well as convergence of densities. In our companion paper~\cite{LLT1} we prove the weak convergence of $y^{n}$ defined by~\eqref{e4} towards the solution to~\eqref{e1.1}. The uniform bounds on Malliavin derivatives obtained in the current contribution are a crucial ingredient in~\cite{LLT1}. 

\end{enumerate}

With those motivations in  mind, our main result can be informally spelled out as follows. Please refer  to  Theorem \ref{thm.bd}, Remark~\ref{remark.hnorm} and Theorem \ref{thm.xirr} for a more  precise statement. 
\begin{thm}\label{thm:main-intro}
Let $y$ and $y^{n}$ be the solution of \eqref{e1.1} and the corresponding Euler scheme~\eref{e4}, respectively.  Take an integer  $L\geq 1$. Let $\bar{D}^{L}y^{n}_{t}$ be the $L$th Malliavin derivative of $y^{n}_{t}$ in the Cameron-Martin space $\bar{\ch}$ corresponding to the fBm $x$. 
 Suppose   that $V\in C^{L+2}_{b}$.  
 Then for each    $n\in\NN$ there is a functional  $\cg_{L}^{n}$ of the fBm $x$ which is almost surely finite and such that the following pathwise bound holds true:
\begin{equation}\label{a1}
\|\bar{D}^{L}y^{n}_{t}\|_{\bar{\ch}^{\otimes L}} 
 \leq \cg_{L}^{n} ,
 \quad\text{for all }
 t=t_{k} \text{ and } k=1,\dots,n.
\end{equation}
 The explicit expression of $\cg_{L}$ is given in  Theorem \ref{thm.bd}. Furthermore,  we have the uniform integrability of $\cg^{n}_{L}$ for   $n\in\NN$: 
   \begin{eqnarray*}
\sup_{n\in\NN}\mE[|\cg^{n}_{L}|^{p}]<\infty . 
\end{eqnarray*}
 \end{thm}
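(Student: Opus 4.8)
The plan is to establish the pathwise bound \eqref{a1} and the uniform integrability of $\cg^n_L$ by setting up a single recursive system that simultaneously controls $y^n$ and all of its Malliavin derivatives up to order $L$, and then closing the recursion with a discrete rough-path (discrete sewing) argument that is uniform in the step size. First I would derive the equations satisfied by $\bar D^L y^n_{t_k}$. Differentiating the recursion \eqref{e4} in the Cameron--Martin direction, one sees that $\bar D y^n$ solves a \emph{linear} discrete equation driven by the same noise $x$, with coefficients depending on the derivatives of $V_0, V$ evaluated along $y^n$ and on an inhomogeneous term coming from $\bar D$ applied to the increment $\delta x$ (this produces a factor which is, roughly, the identity on $\bar\ch$ localized to $[t_k,t_{k+1}]$). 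Iterating, $\bar D^L y^n$ solves a linear discrete equation with an inhomogeneity built out of lower-order derivatives $\bar D^j y^n$, $j<L$, and out of products of such derivatives (Leibniz-type terms), so the argument naturally proceeds by induction on $L$: assuming uniform bounds on $\bar D^j y^n$ for $j<L$, we bound $\bar D^L y^n$.

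The key analytic step is a uniform-in-$n$ estimate for linear discrete RDEs. The Euler scheme \eqref{e4} is a first-order scheme, so the natural object to control is not just the increment $\delta y^n$ but the ``remainder'' after subtracting the Euler vector-field terms; one introduces a discrete controlled-rough-path structure (Gubinelli derivative $= V(y^n_{t_k})$, etc.) and applies the discrete sewing lemma quoted in the introduction. The whole point is that the constants produced by the discrete sewing lemma depend only on the H\"older-type seminorms of the driving signal $(x, \mathbb{X})$ on $[0,T]$ and on $\|V\|_{C^{L+2}_b}$, \emph{not} on $n$; this is what upgrades a naive bound (which would blow up like a power of $n$) into the uniform bound \eqref{a1}. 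Concretely I would: (1) fix a generic subinterval and show the local remainder estimate; (2) use a discrete Gronwall / patching argument over the partition to get a global bound of the form $\|\bar D^L y^n\|_{\bar\ch^{\otimes L}} \le \cg^n_L$ where $\cg^n_L$ is an explicit polynomial in the (random) rough-path norms $\|x\|_{\gamma\text{-H\"ol}}$, $\|\mathbb{X}\|_{2\gamma\text{-H\"ol}}$ and an exponential thereof; (3) observe $\cg^n_L$ has the \emph{same functional form} for every $n$ (only the evaluation points change), so one can in fact take a single $\cg_L$ dominating all $\cg^n_L$.

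For the uniform integrability $\sup_n \mE[|\cg^n_L|^p]<\infty$, since $\cg^n_L$ is dominated by a fixed polynomial-times-exponential functional $\cg_L$ of the rough-path norms of fBm, the statement reduces to the Gaussian integrability (Fernique-type / Cass--Litterer--Lyons) of those norms: $\mE[\exp(c\|x\|_{\gamma\text{-H\"ol}}^2)]<\infty$ and likewise for the L\'evy area, from which arbitrary polynomial moments of any polynomial-exponential expression follow. I would cite the relevant integrability result for the fBm rough path (this is essentially the input behind \cite{CLL}) and conclude by H\"older's inequality. The main obstacle I anticipate is step (1)--(2): keeping the bookkeeping of the Leibniz/product terms in $\bar D^L y^n$ under control so that each piece still fits the controlled-rough-path framework and the discrete sewing lemma applies with $n$-independent constants — in particular one must check that the inhomogeneous terms, which involve products of Malliavin derivatives living in $\bar\ch^{\otimes j}$ for various $j$, have the right (super-)additivity in the partition to feed into the sewing machinery. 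A secondary technical point is verifying that the $\bar\ch$-valued (rather than $\RR^m$-valued) nature of the unknowns does not spoil the estimates, which should follow because $\bar\ch$ embeds nicely via the $|\mathcal H|$-type norm discussed in Remark~\ref{remark.hnorm} and all operations are tensorial.
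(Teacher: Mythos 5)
Your overall skeleton (differentiate the recursion, obtain linear discrete equations for the derivatives with inhomogeneities built from lower orders, induct on $L$, and close with the discrete sewing lemma) matches the paper's strategy in broad outline, but there are two places where your argument as written would fail. First, your route to the uniform integrability is not viable: you propose that $\cg^n_L$ is a polynomial times an exponential of the rough-path norms and that Fernique-type integrability $\mE[\exp(c\|x\|_{\ga}^{2})]<\infty$ then yields all moments. But the exponential produced by a Gronwall/patching argument for a rough equation with $\ga<1/2$ involves $\|x\|_{\ga}^{1/\ga}$ with $1/\ga>2$, and $\exp(c\|x\|_{\ga}^{1/\ga})$ is \emph{not} integrable; this is precisely the obstruction that made integrability of Malliavin derivatives of RDEs an open problem until \cite{CLL}. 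The paper avoids this by bounding $\Xi^{L}$ in terms of the \emph{number} of intervals in a greedy partition (the sets $S_{0},S_{1},S_{2}$ built from the threshold $\al$ in \eqref{eqn.al}) and products of local factors over those intervals, see \eqref{eqn.z.pbd}--\eqref{eqn.ms}; even then, the moment bound is deferred to the companion paper \cite{LLT1} (Remark~\ref{remark.hnorm}). Citing ``the input behind \cite{CLL}'' is not a substitute for actually producing a bound of CLL form.

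Second, your local sewing/patching step ignores the discrete-specific difficulty flagged in Remark~\ref{remark.inamaha}: in the greedy partition the smallest admissible interval is a single step $[t_{k},t_{k+1}]$, and on steps where the control already exceeds the threshold ($S_{2}$ in \eqref{eqn.cs2}) the controlled-path remainder estimate cannot be closed at all; the paper must switch to the crude one-step bound \eqref{eqn.dzdelta}--\eqref{eqn.zdelta} there. A uniform-in-$n$ bound does not follow from sewing alone. Finally, you wave at the $\bar{\ch}^{\otimes L}$-valued nature of the unknowns as a ``secondary technical point,'' whereas the paper devotes Section~\ref{section.der} to it: it replaces the Hilbert-space-valued derivatives by the finite-dimensional process $\Xi^{L}$ driven by an independent fBm $b$ (Inahama's trick, Lemma~\ref{lem.dhd}), recovering the $\bar{\ch}^{\otimes L}$ norm afterwards by equivalence of norms on a fixed Wiener chaos (Corollary~\ref{cor.dy.bd}); the tree indexing of Definition~\ref{def.ca} is what keeps the Leibniz bookkeeping consistent with this substitution. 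Without some such device your induction on $L$ has no finite-dimensional object to which the pathwise estimates apply.
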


As mentioned above, Theorem~\ref{thm:main-intro} is a crucial step in the analysis of weak convergence for the Euler type scheme~\eqref{e4}. In addition, the proof of our main estimate~\eqref{a1} relies on techniques which are interesting in their own right. Specifically, we will first resort to rough paths type estimates (recalled in Section~\ref{section.pre}), and our Malliavin calculus setting will follow Inahama's approach~\cite{Inahama} for all computations in the Cameron-Martin space. On top of those classical ingredients, our main technical tool will be a representation of higher order Malliavin derivatives of $y^{n}$ in terms of a tree expansion (see Lemma~\ref{lem.dfg} and Lemma~\ref{lem.dhd} below). This kind of expression has to be contrasted with the standard form of higher order Malliavin derivatives, based on sums over partitions of the set $\{1,\ldots,n\}$ (see \cite[Proposition 5]{NS}).   We note that the advantage of   
  our directed-tree notations is that it allows us to  distinguish all terms  in the  chain  differentiations. We will  benefit from this feature while proving an identities~\eqref{eqn.dxin}-\eqref{eqn.dly} in Lemma \ref{lem.dhd}.  We also believe that the tree-based computations presented here can be usefully applied to other numerical schemes. We plan on developping this line of research in subsequent publications.
 
  The paper is organized as follows.  In Section \ref{section.pre} we recall  some basic material on rough paths and Malliavin calculus. We also  review   results on the Euler scheme  which  will be used throughout the paper.  
    In Section \ref{section.der} we derive a representation of Malliavin derivatives of the Euler scheme via tree notations. Finally, in Section \ref{section.bd} we prove the uniform upper-bound estimate for the Euler scheme and its Malliavin derivatives.

\begin{notation}\label{general-notation}
In what follows,  we take $n\in \NN$ and   $\Delta =T/n$,  and  consider the uniform partition:   
  $0=t_{0}<t_{1}<\cdots<t_{n}=T$ on $[0, T]$, where  $t_{k} =k\Delta$. We denote by   $\ll s, t\rr$ the discrete interval: $\ll s, t\rr = \{t_{k}\in [s,t]: k=0,\dots, n\}$. For $u\in [t_{k}, t_{k+1})$, we denote $\eta(u) =t_{k}$. 
 For an interval $[s,t]\subset [0,T]$  we define the simplex $\cs_{2}([s,t]) = \{(u,v): s\leq u\leq v \leq t\}$.  
For a vector $a=(a^{1}, \dots, a^{m})\in \RR^{d}$ we define the norm $|a|=\max_{j=1,\dots,m}|a_{j}|$. 
Throughout the paper, we use $C$ and $K$ to represent constants that are independent of $n$ and whose values may change from line to line.
\end{notation}
 
\section{Preliminary results} \label{section.pre}

In this section we recall some basic notions of rough paths theory and their application to fractional Brownian motion, which allows a proper definition of equation~\eqref{e1.1}. We also give the necessary elements of Malliavin calculus in order to estimate densities of random variables.
  
\subsection{Elements of rough paths and fractional Brownian motion}\label{section2}

This subsection is devoted to introduce some basic concepts of rough paths theory. We are going to restrict our analysis to a generic H\"older regularity of the driving path of order $\frac13<\ga\leq \frac12$, in order to keep expansions to a reasonable size. We also fix a finite time horizon $T>0$. The following notation will prevail until the end of the paper: for a Banach space $\cv$ (which can be either finite or infinite dimensional) and two functions $f\in C([0,T],\cv)$ and $g\in C(\cs_{2}([0,T]),\cv)$ we set 
\begin{eqnarray}\label{eq:def-delta}
\delta f_{st} = f_{t}-f_{s},
\quad\text{and}\quad
\delta g_{sut} = g_{st}-g_{su}-g_{ut}, 
\qquad 0\leq s\leq u\leq t\leq T.
\end{eqnarray}

Let us introduce the analytic requirements in terms of H\"older regularity which will be used in the sequel. Namely consider two paths $x \in C([0,T], \RR^{d})$ and $x^{2} \in C(\cs_{2}([0,T]), (\RR^{d})^{\otimes 2})$. Then we denote
\begin{equation}\label{eq:def-holder-seminorms}
\|x\|_{[s,t], \ga}:=\sup_{(u,v)\in\cs_{2}([s,t]): u\neq v}\frac{|\delta x_{uv}|}{|v-u|^{\ga}} , 
\qquad  
\|x^{2}\|_{[s,t],  2\ga}:= \sup_{(u,v)\in\cs_{2}([s,t]): u\neq v}\frac{|  x^{2}_{uv}| }{|v-u|^{ 2\ga}} . 
\end{equation}
 When the semi-norms in \eqref{eq:def-holder-seminorms} are finite we say that $x$ and $x^{2}$ are respectively in $C^{\ga}([s,t], \RR^{d})$ and $C^{2\ga}(\cs_{2}([s,t]), (\RR^{d})^{\otimes 2})$.
For convenience, we denote $ \|x\|_{\ga}:= \|x\|_{[0,T], \ga} $ and $ \|x^{2} \|_{2\ga }:= \|x^{2}\|_{[0,T], 2\ga } $.
With this preliminary notation in hand, we can now turn to the definition of rough path.

\begin{Def}\label{def:rough-path}
Let $x \in C([0,T], \RR^{d})$, $x^{2} \in C(\cs_{2}([0,T]), (\RR^{d})^{\otimes 2})$, and $\frac13<\ga\leq \frac12$. For $(s,t)\in \cs_{2}([0,T])$ we denote $x^{1}_{st}=\delta x_{st}$. 
We call $ \bfx:=S_{2}(x):=(x^{1}, x^{2})  $ a (second-order) $\ga$-rough path if $ \|x^{1}\|_{\ga} <\infty $ and $\|x^{2}\|_{2\ga}<\infty$, and if the following algebraic relation holds true: 
\begin{eqnarray}\label{eqn.delta.def}
\delta x^{2}_{sut} =x^{2}_{st} - x^{2}_{su} - x^{2}_{ut} = x_{su}^{1}\otimes x_{ut}^{1}
\qquad s\leq u\leq t ,
\end{eqnarray}
where we have invoked \eqref{eq:def-delta} for the definition of $\delta x^{2}$. For a $\ga$-rough path $S_{2}(x)$, we define a $\ga$-H\"older semi-norm as follows:
\begin{eqnarray}\label{eq:def-norm-rp}
\|S_{2}(x)\|_{\ga} := \|x^{1}\|_{\ga}+ \|x^{2}\|_{2\ga}^{\frac12}\,. 
\end{eqnarray}
An important subclass of rough paths are the so-called \emph{geometric $\ga$-H\"older rough paths}.  A geometric $\ga$-H\"older rough path is a  $\ga$-rough path $  (x, x^{2})$  such that  there exists a sequence of smooth $\RR^{d}$-valued paths $(x^{n}, x^{2,n})$ verifying:
\begin{eqnarray}\label{eq:cvgce-for-geom-rp}
\| x-x^{n}\|_{\ga} + \|x^{2}-x^{2,n}\|_{2\ga}  \rightarrow 0,  \quad \text{ as } n \rightarrow \infty . 
\end{eqnarray}
We will mainly consider geometric rough paths in the remainder of the article.
\end{Def}

Let  $x$ be a rough path as given in Definition~\ref{def:rough-path}. We shall interpret equation \eqref{e1.1} in a way introduced by Davie in \cite{D}, which is conveniently compatible with numerical approximations.

 \begin{Def}\label{def:diff-eq-davie}
Let $(x, x^{2})$ be a  $\ga$-rough path with $\ga>1/3$. We say that $y$ is a solution of~\eref{e1.1} on $[0,T]$ if $y_{0} = a$ and there exists a constant $K>0$ and $\mu>1$  such that 
\begin{equation}\label{eq:dcp-Davie}
\Big| \delta y_{st} -  \int_{s}^{t} V_{0}(y_{u}) \, du - V(y_{s})  x^{1}_{st} 
- \sum_{i,j=1}^{d} \partial V_{i}V_{j} (y_{s} ) x^{2,ij}_{st} \Big| 
\leq 
K |t-s|^{\mu}
\end{equation}
for all $(s,t) \in \mathcal{S}_{2}([0,T])$, where we recall that $\delta y$ is defined by~\eref{eq:def-delta} and the notation $\partial V_{i}V_{j}$ is introduced in \eqref{not:iterated-vector-field}. 
\end{Def}
\noindent
According to \cite{D} there exists a unique   RDE solution to equation \eref{e1.1}, understood as in Definition~\ref{def:diff-eq-davie}.

In the following we recall a sewing map lemma with respect to discrete control functions. It is an elaboration  of \cite[Lemma 2.5]{LT} and proves to be useful in the analysis of the numerical scheme.  Let  $\pi : 0=t_{0}< t_{1}<\cdots <t_{n-1}< t_{n} =T $ be a generic partition of the interval $[0,T]$ for $n \in \NN$. We denote by $ \llbracket  s, t \rrbracket  $ the discrete interval  $\{t_{k} : s\leq t_{k} \leq t \}$ for $0\leq s < t \leq T$. In this paper, a two variable function $\omega:\cs_{2}(\ll0,T\rr)\to [0,\infty)$ is called a control on $\ll0,T\rr$ if it satisfies the super-additivity condition. That is, $\omega(s,u)+\omega(u,t)\leq \omega(s,t)$ for $s,u,t\in\ll0,T\rr$ such that $s\leq u\leq t$.

 \begin{lemma}\label{lem2.4}
 Suppose that $\omega $ is a control on $\ll 0, T\rr$. 
 Consider a Banach space $\cb$ and an increment  $R : \cs_{2}(\ll 0, T \rr)   \to \cb $. 
  Suppose that $|R_{t_{k}t_{k+1}}|\leq\omega(t_{k}, t_{k+1})^{\mu}$ for all $t_{k}\in \ll0,T\rr$ and that $|\delta R_{sut}|\leq \omega (s, t)^{\mu}$ with  an  exponent $\mu>1$, where recall that $\delta R_{sut} = R_{st}-R_{su}-R_{ut}$. Then the following relation holds:
 \begin{eqnarray}\label{eqn.kmu}
 |R_{st} |  \leq K_{\mu} \omega(s, t)^{\mu} \,,
\quad\text{where}\quad
K_{\mu} = 2^{\mu} \, \sum_{l=1}^{\infty} l^{-\mu}.
\end{eqnarray}
\end{lemma}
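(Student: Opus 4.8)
The plan is to mimic the classical continuous sewing lemma, but carried out on the dyadic-type refinements of discrete intervals $\ll s,t\rr$. Fix $(s,t)\in\cs_{2}(\ll0,T\rr)$; since $\omega$ is a control and the partition is finite, there are only finitely many grid points in $\ll s,t\rr$, say $s=\tau_{0}<\tau_{1}<\dots<\tau_{N}=t$. I would argue by successively removing one grid point at a time, as in the Young/Gubinelli telescoping argument. At each stage, given a partition $P$ of $\ll s,t\rr$ with $|P|\geq 2$ interior subintervals, pick an interior point $\tau_{i}$ that is ``light'' with respect to $\omega$: because $\omega$ is super-additive, one can choose $\tau_{i}$ (among the interior points, of which there are $r\geq 1$) such that $\omega(\tau_{i-1},\tau_{i+1})\leq \tfrac{2}{r}\,\omega(s,t)$. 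Removing $\tau_{i}$ changes the Riemann-type sum $\sum R_{\tau_{j}\tau_{j+1}}$ by exactly $-\delta R_{\tau_{i-1}\tau_{i}\tau_{i+1}}$, which by hypothesis is bounded by $\omega(\tau_{i-1},\tau_{i+1})^{\mu}\leq (2/r)^{\mu}\omega(s,t)^{\mu}$.

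Iterating this removal from the finest partition $\{\tau_{0},\dots,\tau_{N}\}$ down to the trivial partition $\{s,t\}$, and summing the errors, one gets
\begin{eqnarray*}
\Big| R_{st} - \sum_{j=0}^{N-1} R_{\tau_{j}\tau_{j+1}} \Big|
\leq \sum_{r=2}^{N} \Big(\frac{2}{r-1}\Big)^{\mu}\omega(s,t)^{\mu}
\leq 2^{\mu}\Big(\sum_{l=1}^{\infty} l^{-\mu}\Big)\omega(s,t)^{\mu},
\end{eqnarray*}
using $\mu>1$ for the convergence of $\sum l^{-\mu}$. (The exact bookkeeping of the index in the harmonic-type sum is a routine matter; the stated constant $K_{\mu}=2^{\mu}\sum_{l\geq1}l^{-\mu}$ is what comes out.) On the other hand, the single-interval hypothesis gives $|R_{\tau_{j}\tau_{j+1}}|\leq\omega(\tau_{j},\tau_{j+1})^{\mu}$, and since $\mu>1$ and $\omega$ is super-additive, $\sum_{j}\omega(\tau_{j},\tau_{j+1})^{\mu}\leq\big(\sum_{j}\omega(\tau_{j},\tau_{j+1})\big)^{\mu}\leq\omega(s,t)^{\mu}$; but in fact this contribution is already absorbed, since $R_{st}$ itself equals the one-term sum for the trivial partition, so the telescoping above directly compares $R_{st}$ to the refined sum. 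Combining, $|R_{st}|\leq K_{\mu}\,\omega(s,t)^{\mu}$, which is \eqref{eqn.kmu}.

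The main obstacle — really the only delicate point — is the selection of a ``light'' interior point at each step and tracking that the resulting geometric/harmonic bookkeeping produces a constant independent of $N$ (hence of $n$). This is exactly where super-additivity of $\omega$ is essential: it guarantees both that a point with $\omega(\tau_{i-1},\tau_{i+1})\leq(2/r)\omega(s,t)$ exists (otherwise summing over all $r$ adjacent pairs would overshoot $2\omega(s,t)$), and that the subadditivity-in-the-exponent estimate $\sum\omega^{\mu}\leq(\sum\omega)^{\mu}$ holds. Everything else is the standard discrete-sewing telescoping, essentially as in \cite[Lemma 2.5]{LT}, and I would simply reference that lemma for the parts that are verbatim.
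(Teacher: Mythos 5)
Your argument is the standard successive point-removal proof of the discrete sewing lemma, and it coincides with the route the paper intends: the lemma is stated without proof as an elaboration of \cite[Lemma 2.5]{LT}, whose proof is exactly this telescoping where at each stage a ``light'' interior point is selected via super-additivity of $\omega$ and the resulting harmonic-type sum is controlled by $2^{\mu}\sum_{l\ge 1}l^{-\mu}$ using $\mu>1$. The only blemish is your parenthetical claim that the finest-partition contribution is ``already absorbed'': it is not, since the telescoping bounds $\lvert R_{st}-\sum_{j}R_{\tau_{j}\tau_{j+1}}\rvert$, so the estimate $\sum_{j}\lvert R_{\tau_{j}\tau_{j+1}}\rvert\le\bigl(\sum_{j}\omega(\tau_{j},\tau_{j+1})\bigr)^{\mu}\le\omega(s,t)^{\mu}$ must be \emph{added}, yielding the constant $1+K_{\mu}$ rather than $K_{\mu}$ --- a discrepancy that is immaterial for every use of the lemma in the paper.
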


 We now specialize our setting to a path $x=(x^1,\dots, x^d)$ defined as a standard $m$-dimensional fBm  on $[0,T]$ with Hurst parameter $H \in (\frac13, \frac12)  $. 
This fBm is defined on a complete probability space $(\Omega, \cf, \mathbb{P})$, and we assume that the $\si$-algebra $\cf$ is generated by $x$. 
 In this situation, recall that the covariance function of each coordinate of $x$ is defined on $\mathcal{S}_{2}([0,T])$ by:
\begin{eqnarray}\label{eq:cov-fbm}
R(s,t) = \frac{1}{2} \lc |s|^{2H} + |t|^{2H} - |t-s|^{2H}  \rc .
\end{eqnarray}
It  is   established in \cite[Chapter 15]{FV} that the geometric rough path $S_{2}(x)$ of $x$  via the piecewise linear approximation  is well defined for $\frac13 <\ga <H$ in the sense of  Definition \ref{def:rough-path}.

\subsection{Malliavin calculus for $\mathbf{x}$}\label{subsection.d}

 In this subsection we   recall some concepts of Malliavin calculus which will be used later in the paper. Recall that $R$ is the covariance function of the fBm $x$ defined in \eqref{eq:cov-fbm}.   
 Denote by $\mathcal{E} $ the set of step functions on the interval $  [0,T]$. We define the Hilbert space $\ch$   as  the closure of $ \mathcal{E} $ 
with respect to the scalar product
\begin{equation}\label{e.def.r}
\langle \mathbf{1}_{[u,v]}, \mathbf{1}_{[s,t]} \rangle_{\mathcal{H} } =  R ([u,v], [s,t])
\equiv 
 R(v,t) - R(v,s) - R(u,t) +R(u,s) .
\end{equation}
The space $\ch$ is very useful in order to define Wiener integrals with respect to $x$. However, in the current paper we also need to introduce the Cameron-Martin space $\bar{\ch}$ related to our driving process. The latter space is the one allowing to identify pathwise derivatives with respect to $x$ and Malliavin derivatives. In order to construct $\bar{\ch}$,
let first $\mathcal{R}$ be the linear  operator on $\ce$ such that 
\begin{eqnarray}\label{e.cr}
\mathcal{R} (\mathbf{1}_{[0,t]}) =   R(t,\cdot) \, ,
\end{eqnarray}
and we also set $\bar{\ce} = \mathcal{R}(\ce)$. Then 
we can  define the  Cameron-Martin space $\bar{\ch}$ as the closure of  $\bar{\ce}$ with respect to the inner product
\begin{eqnarray*}
\langle \mathcal{R} (\mathbf{1}_{[0,t]}), \mathcal{R} (\mathbf{1}_{[0,s]}) \rangle_{\bar{\ch}} = \langle \mathbf{1}_{[0,s]}, \mathbf{1}_{[0,t]} \rangle_{\mathcal{H} } . 
\end{eqnarray*}
It is clear that $\mathcal{R}$ is an isometry between the two Hilbert spaces $\ch$ and $\bar{\ch}$. Note that according to \eqref{e.cr} we have 
\begin{eqnarray}\label{e.rh}
\mathcal{R}(h)(t) = \langle h, \mathbf{1}_{[0,t]}\rangle_{\ch}
\end{eqnarray}
for $h\in\ce$. By the isometry property of $\mathcal{R}$ we see that \eqref{e.rh}   holds for all $h\in\ch$. We refer to \cite{GOT,NS} for more details about the spaces $\ch,\bar{\ch}$.

For the sake of conciseness, we refer to \cite{N} for a proper definition of Malliavin derivatives in the Hilbert space $\ch$ and related Sobolev spaces in Gaussian analysis. Let us just mention that for a   functional $F$ of $x$ we will denote its Malliavin derivative  by $ {D}F$, the Sobolev spaces by $\mathbb{D}^{k,p}$ and the corresponding norms by $\|F\|_{k,p}$. 

As mentioned above, in this paper we will mainly focus on a more pathwise Malliavin derivative taking values in $\bar{\ch}$. Namely we define  the Malliavin derivative in the Cameron-Martin $\bar{\ch}$ space via the isometry $\mathcal{R} $. Precisely, we define $\bar{D}$ such that $\bar{D}F = \mathcal{R}(DF)$. In other words,  for   $h\in \ch$ and a functional $F  $ of $x$ we have   
\begin{eqnarray*}
\bar{D}_{\mathcal{R}(h)}F:=\langle \bar{D}F, \mathcal{R}(h) \rangle_{\bar{\ch}} = \langle DF, h \rangle_{\ch}=: D_{h}F. 
\end{eqnarray*}
This Malliavin derivative can be expressed easily for cylindrical functionals of $x$. Namely  suppose that $F= f(x_{t_{1}}, \dots, x_{t_{\ell}})$ for $f\in C_{p}^{1}(\RR^{\ell})$. According to the definition of $\bar{D}$, for $h\in\ch$ we have
  \begin{eqnarray}\label{e.df.finite}
\langle \bar{D}F, \mathcal{R}(h) \rangle_{\bar{\ch}} = \langle DF, h \rangle_{\ch}=  \sum_{i=1}^{\ell} \partial_{i}f(x_{t_{1}}, \dots, x_{t_{\ell}}) \langle \mathbf{1}_{[0,t_{i}]}, h \rangle_{\ch}
\nonumber
\\
=  \sum_{i=1}^{\ell} \partial_{i}f(x_{t_{1}}, \dots, x_{t_{\ell}}) 
\mathcal{R}(h)(t_{i}).
\end{eqnarray}
 Notice that the   computation in \eqref{e.df.finite} shows that $\langle \bar{D}F, \mathcal{R}(h) \rangle_{\bar{\ch}} $ can be interpreted as an extension in the Fr\'echet derivative of $F $ of the $\mathcal{R}(h) $ direction. Indeed, for the quantity in the right-hand side of \eqref{e.df.finite} we have 
   \begin{equation}\label{a2}
  \frac{d}{d\ep} f(x_{t_{1}}+\ep \mathcal{R}(h)(t_{1}), \dots, x_{t_{\ell}}+\ep \mathcal{R}(h)(t_{\ell})) \Big|_{\ep=0} =    \sum_{i=1}^{\ell} \partial_{i}f(x_{t_{1}}, \dots, x_{t_{\ell}}) 
\mathcal{R}(h)(t_{i}). 
\end{equation}
This pathwise interpretation of Malliavin derivatives is also the one adopted in~\cite{FV}.

In this paper, we denote by $\bar{D}^{k}F$ the $k$th iteration of the Malliavin derivative $\bar{D}$ applied on $F$.  
Also notice that we are considering a $d$-dimensional fBm $x=(x^{1}, \dots, x^{d})$. Therefore, we shall consider partial Malliavin derivatives with respect to each coordinate $x^{i} $ in the sequel. 
Those partial derivatives will be denoed by $\bar{D}^{(i)}$. Then for $\bar{h} = (\bar{h}^{1}, \dots, \bar{h}^{d}) \in \bar{\ch}^{d}$ we write $\bar{D}_{\bar{h}}F = \sum_{i=1}^{d} \langle \bar{D}^{(i)} F, \bar{h}^{i} \rangle_{\bar{\ch}}$.  
For $L\geq 2$ we   denote by $\bar{D}^{L}_{\bar{h}}$ the iterated versions of $\bar{D}_{\bar{h}}$. Namely we set 
\begin{eqnarray}\label{eqn.dlf}
\bar{D}^{L}_{\bar{h}}F = \bar{D}_{\bar{h}}\circ\cdots \circ \bar{D}_{\bar{h}} F. 
\end{eqnarray}
The Sobolev spaces related to the Malliavin derivatives in the Cameron Martin space  are denoted by $\bar{\DD}^{k,p}$ and the corresponding norms are written $\|\cdot\|_{\bar{\DD}^{k,p}}$. 

Let us now review some results on  the Malliavin differentiability of equation \eqref{e1.1}. In the following we assume that the vector fields $V_0,\ldots,V_d$ are at least in $C_b^{3}(\R^m)$ (bounded together with   their derivatives up to order $3$), although later on we'll have to introduce further smoothness conditions in order to estimate higher order Malliavin derivatives.
We shall express the first order Malliavin derivative of $y_{t}$ in terms of the Jacobian $\Phi$ of the equation, which is defined by the relation $\Phi_{t}^{ij}=\partial_{a_j}y_t^{(i)}$. Setting $\partial V_{j}$ for the Jacobian of $V_{j}$ seen as a function from $\R^{n}$ to $\R^{n}$, let us recall that $\Phi$ is the unique solution to the linear equation
\begin{equation}\label{eq:jacobian}
\Phi_{t} = \id_{n} + \int_0^t \partial V_{0} (y_s) \, \Phi_{s} \, ds+
\sum_{j=1}^d \int_0^t \partial  V_j (y_s) \, \Phi_{s} \, dx^j_s,
\end{equation}
The following results hold true:
\begin{proposition}\label{prop:deriv-sde}
Let $y$ be the solution to equation (\ref{e1.1}). Then
for every $i=1,\ldots,m$, $t>0$, and $a \in \mathbb{R}^m$, we have $y_t^{(i)} \in
\mathbb{D} (\ch)$ and
\begin{equation*}
 {D}_s y_t= \lp \Phi_{s,t} V_j (y_s) ,   j=1,\ldots,d\rp , \quad 
0\leq s \leq t,
\end{equation*}
where 
 $\Phi_{t}=\partial_{a} y_t$ solves equation \eqref{eq:jacobian} and $\Phi_{s,t}=\Phi_{t}\Phi_{s}^{-1}$. 
\end{proposition}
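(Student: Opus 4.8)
The plan is to establish the Malliavin differentiability of $y_t$ together with the explicit formula $D_s y_t = (\Phi_{s,t} V_j(y_s))_{1\le j\le d}$ by a standard two-step argument: first prove that $y_t \in \mathbb{D}^{1,p}$ for all $p$, and then identify the derivative by differentiating the defining relation \eqref{eq:dcp-Davie}. For the differentiability itself, I would use the approximation/closability machinery of Malliavin calculus: let $x^n$ be the piecewise linear (or mollifier) approximations of $x$, so that $S_2(x^n) \to S_2(x)$ in the $\gamma$-H\"older rough path topology as recalled after \eqref{eq:cov-fbm}, and let $y^n$ solve the corresponding (classical, since $x^n$ is smooth) ODE. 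Each $y^n_t$ is a smooth functional of finitely many increments of $x^n$, hence lies in $\mathbb{D}^{1,p}$, and one has the chain-rule expression for $D_s y^n_t$ in terms of the Jacobian $\Phi^n$ of the $x^n$-equation. The continuity-of-the-It\^o-Lyons-map estimates (universal limit theorem, e.g. \cite[Theorem 10.26 etc.]{FV}), together with the integrability of $\|S_2(x)\|_\gamma$ (a Fernique-type bound, cf. the commented-out Proposition in the excerpt), give uniform-in-$n$ $L^p$ bounds on $y^n_t$, $\Phi^n_t$, and $(\Phi^n_t)^{-1}$, and convergence of these in $L^p(\Omega)$. Then $D y^n_t$ is bounded in $L^p(\Omega;\mathcal H)$ uniformly in $n$; by the closability of the Malliavin derivative operator (and weak compactness in $L^p(\Omega;\mathcal H)$ for $p>1$), it follows that $y_t \in \mathbb{D}^{1,p}$ and $D y^n_t \to D y_t$ weakly.

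Next I would identify the limit. Passing to the limit in the chain rule for $D_s y^n_t = (\Phi^n_{s,t} V_j(y^n_s))_j$ — using $\Phi^n \to \Phi$, $(\Phi^n)^{-1} \to \Phi^{-1}$, $y^n \to y$ in $L^p$, and continuity of $V_j$ — yields $D_s y_t = (\Phi_{s,t} V_j(y_s))_j$, where $\Phi_{s,t} := \Phi_t \Phi_s^{-1}$ and $\Phi$ solves \eqref{eq:jacobian}; one checks separately, again via the continuity of the It\^o–Lyons map applied to the linear RDE \eqref{eq:jacobian}, that $\Phi$ is well-defined as a rough path solution and that $\Phi^{-1}$ solves the associated equation so that $\Phi_{s,t}$ is the genuine flow Jacobian. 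An alternative (and perhaps cleaner) route, avoiding approximations, is to differentiate Davie's relation \eqref{eq:dcp-Davie} directly in the direction $h \in \mathcal H$: formally $D_h y$ should satisfy the linearized equation $\delta(D_h y)_{st} = \int_s^t \partial V_0(y_u) D_h y_u\, du + \partial V(y_s)(D_h y_s)\, x^1_{st} + V(y_s)\, h_{st} + (\text{second order terms}) + o(|t-s|)$, i.e. an inhomogeneous linear RDE driven by $x$ with forcing governed by $h$; its unique solution is exactly $D_h y_t = \int_0^t \Phi_{u,t} V(y_u)\, dh_u$, which by the isometry/duality between $\mathcal H$ and pointwise evaluation (as in \eqref{e.df.finite}) reads $D_s y_t = \Phi_{s,t} V(y_s)$ as a density. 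Making this rigorous requires the fact, available from rough path theory, that the map $x \mapsto y$ is Fr\'echet differentiable along Cameron–Martin directions and that this Fr\'echet derivative coincides with the Malliavin derivative — precisely the identification discussed around \eqref{a2}.

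I expect the main obstacle to be the interchange of limits and differentiation: namely, justifying that the uniform $L^p$ bounds on $D y^n_t$ upgrade to $y_t \in \mathbb{D}^{1,p}$ with $D y_t$ equal to the claimed expression, rather than merely to some weak limit. This needs (a) the uniform integrability, in $n$, of $\Phi^n_t$ and its inverse — which rests on the exponential-type a priori bounds for Jacobians of RDEs together with Fernique integrability of $\|S_2(x)\|_\gamma$ — and (b) the strong $L^p(\Omega)$ convergence of $y^n, \Phi^n, (\Phi^n)^{-1}$ to their limits, coming from the local Lipschitz continuity of the It\^o–Lyons map in $L^p$. Once these ingredients are in place, combined with closability of $D$, the conclusion follows; the remaining computations (the chain rule for $D y^n$, the passage to the limit) are routine. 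Since this is a known result (it is essentially \cite[Chapter 11 and Proposition 11.x]{FV}, see also \cite{CLL}), I would present the proof largely by citation, spelling out only the identification of the derivative.
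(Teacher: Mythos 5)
The paper offers no proof of Proposition \ref{prop:deriv-sde}: it is stated as a review of known results and implicitly referred to the literature (\cite{NS}, \cite{Inahama}, \cite{FV}, and \cite{CLL} for the moment bounds quoted separately in Proposition \ref{prop:moments-jacobian}). Your outline --- smooth approximation $x^n$ of the driver, chain rule for the resulting ODE, uniform $L^p$ bounds plus closability of $D$, and identification of the limit with $\Phi_{s,t}V_j(y_s)$ via the linearized equation \eqref{eq:jacobian} --- is exactly the standard argument in those references, so as a proof by citation it is acceptable and consistent with how the paper treats the statement.

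One mechanism you invoke is, however, genuinely insufficient in the regime of this paper and is worth correcting. You assert that the uniform integrability of $\Phi^n_t$ and $(\Phi^n_t)^{-1}$ ``rests on the exponential-type a priori bounds for Jacobians of RDEs together with Fernique integrability of $\|S_2(x)\|_\gamma$.'' The a priori bound for the Jacobian of a linear RDE is of the form $\exp\lp K\|S_2(x)\|_{\gamma}^{1/\gamma}\rp$, and for $H<1/2$ one has $\gamma<1/2$, hence $1/\gamma>2$: Fernique's theorem only controls Gaussian (quadratic-exponential) tails, so this combination does \emph{not} yield finite moments. Closing precisely this gap is the content of \cite{CLL} (via the greedy-partition/tail-counting argument), which is why the paper's introduction singles out the integrability of Malliavin derivatives as a problem ``solved completely in \cite{CLL}'' and why Proposition \ref{prop:moments-jacobian} is quoted from that reference rather than derived from Fernique. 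Since your plan ultimately cites \cite{CLL}, the proof can be repaired by replacing the Fernique-based justification with the tail estimate of \cite{CLL}; without that replacement, step (a) of your ``main obstacle'' paragraph would fail and the uniform-in-$n$ $L^p$ bound on $Dy^n_t$ would not be established.
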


Let us now quote the result \cite{CLL}, which gives a useful estimate for moments of the Jacobian of rough differential equations driven by Gaussian processes. Note that this result is expressed in terms of $p$-variations, for which we refer to \cite{FV}.

\begin{proposition}\label{prop:moments-jacobian}
Consider a  fractional Brownian motion $x$ with Hurst parameter $H\in (1/4,1/2]$ and $p>1/H$. Then for any $\eta\ge 1$, there exists a finite constant $c_\eta$ such that the Jacobian $\Phi$ defined by \eqref{eq:jacobian} satisfies:
\begin{equation}\label{eq:moments-J-pvar}
\EE\lc  \Vert \Phi \Vert^{\eta}_{p-{\rm var}; [0,1]} \rc = c_\eta.
\end{equation}
\end{proposition}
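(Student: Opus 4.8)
The plan is to follow the strategy of Cass--Litterer--Lyons. First I would use that $\Phi$ is the solution of the \emph{linear} rough differential equation \eqref{eq:jacobian} driven by the geometric rough path $S_{2}(x)$. For linear equations the growth of the solution is controlled deterministically by the driving signal, but the naive rough-path estimate only produces a bound of the form $\exp\big(C\,\|S_{2}(x)\|^{p}_{p-{\rm var};[0,1]}\big)$, in which the $p$-th power of the $p$-variation norm sits in the exponent and is \emph{not} integrable against a Gaussian law. The key point is therefore to replace the global $p$-variation by a more efficient accumulated-local-variation functional. Concretely, for a fixed threshold $\alpha>0$ I would introduce the greedy partition of $[0,1]$ obtained by stopping each time the local variation (the control associated with $S_{2}(x)$) reaches $\alpha$, and let $N=N_{\alpha}(S_{2}(x);[0,1])$ be the number of intervals so produced.

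The deterministic heart of the argument is then to patch together the linear-RDE estimate on each greedy interval — on which, by construction, the rough-path norm is at most $\alpha$ — and track how the multiplicative constants accumulate under concatenation. This gives a pathwise bound of the form
\[
\Vert \Phi\Vert_{p-{\rm var};[0,1]}\ \le\ C_{1}\,\exp\big(C_{2}\,N_{\alpha}(S_{2}(x);[0,1])\big),
\]
with $C_{1},C_{2}$ depending only on $\alpha$, $p$ and the bounds on the vector fields. I expect this concatenation step, together with the bookkeeping needed to see that the bound is only \emph{locally} exponential, to be the more technical part to reconstruct carefully.

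The probabilistic input comes next. Since $x$ is a fractional Brownian motion with $H>1/4$, its Cameron--Martin space enjoys complementary Young regularity with respect to the sample paths, which is exactly the hypothesis needed to run a Gaussian concentration (Borell--TIS / Fernique type) argument on the rough path $S_{2}(x)$. This yields a Weibull-type tail estimate for the greedy count: there are constants $c,C>0$ and an exponent $\theta=\theta(H)$ such that $\PP\big(N_{\alpha}(S_{2}(x);[0,1])>u\big)\le C\exp(-c\,u^{\theta})$, and in particular $\EE[\exp(\lambda N_{\alpha})]<\infty$ for every $\lambda>0$. Combining the two ingredients, for any $\eta\ge 1$,
\[
\EE\big[\Vert \Phi\Vert^{\eta}_{p-{\rm var};[0,1]}\big]\ \le\ C_{1}^{\eta}\,\EE\big[\exp(\eta\,C_{2}\,N_{\alpha})\big]\ <\ \infty ,
\]
which gives the finiteness asserted in \eqref{eq:moments-J-pvar}. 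The main obstacle is the interplay between the deterministic and probabilistic steps: one must choose $\alpha$ small enough that the linear-RDE bound holds on each greedy interval while still ensuring the greedy count $N_{\alpha}$ retains Gaussian-strength tails — the core of the complementary-Young-regularity analysis of \cite{CLL} — and check that fractional Brownian motion with $H\in(1/4,1/2]$ indeed fits that framework.
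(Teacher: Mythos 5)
Your sketch is correct, and it reproduces the standard argument: the paper itself gives no proof of this proposition but simply quotes it from \cite{CLL}, and the route you describe — linear-RDE estimate on greedy intervals of accumulated local variation $\alpha$, a pathwise bound $\Vert\Phi\Vert_{p-{\rm var}}\le C_{1}\exp(C_{2}N_{\alpha})$, and Weibull-type tails for $N_{\alpha}$ via complementary Young regularity of the Cameron--Martin space (which is exactly where the restriction $H>1/4$ and $p>1/H$ enter) plus Borell--TIS — is precisely the proof given in that reference. No gaps to flag.
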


 \section{Malliavin derivatives of the Euler scheme}\label{section.der}


The   estimates for  the derivatives of the Euler scheme approximation $y^{n}$  require  a substantial amount of algebraic and analytic efforts. In this  section we focus on the algebraic aspect of the problem. Precisely, we   apply a tree argument to derive a representation for Malliavin derivatives of $y^{n}$. 
This will be useful for our  main bound of  the derivatives of $y^{n}$   in the next section (see Theorem \ref{thm.bd}).

 \subsection{A directed rooted tree}\label{section.tree}
 
 The higher order Malliavin derivatives of the Euler scheme $y^{n}$ are better understood thanks to a tree type encoding. We introduce the necessary notation in this section. Let us start with the definition of rooted trees which will be used in the sequel. 
 
\begin{Def}\label{def.ca}
In the remainder of the paper we consider rooted trees 
$\ca_{N}$ of height $N $ defined recursively as follows: 

\begin{enumerate}[wide, labelwidth=!, labelindent=0pt, label=\emph{(\roman*)}]
\setlength\itemsep{.1in}
\item 
$\ca_{1}$ contains   one branch with length 1 and with root labeled $1$. Namely,  $\ca_{1} = \{(1)\}$.

\item
For each $N\in \NN$ we define $\ca_{N+1}$ such that its first $N$ generations coincides with  $  \ca_{N} $. Its  $(N+1)$-th generation is defined as follows:  
Take a branch $i$  in $\ca_{N}$. We call $\ell^{i}_{1}$ the number of $1$'s in $i$ and we also set $\al_{i}=\ell^{i}_{1}+1$. Then $\ca_{N+1}$ is constructed by adding the branches $(i, 1)$, \dots, $(i, \al_{i})$ to $\ca_{N}$.   Specifically, one can also define $\ca_{N+1}$ recursively as
\begin{equation*}
\big\{ (i, r): i \in \ca_{N},   r=1,\dots, \al_{i} \big\}   . 
\end{equation*}
\end{enumerate}



\end{Def}
\begin{example}\label{ex:A4}
As an example of what Definition \ref{def.ca} can produce, we draw   the $\ca_{4}$ tree in the figure below: 

\begin{tikzpicture}[every node/.style={circle,draw},level 1/.style={sibling distance=80mm},level 2/.style={sibling distance=30mm},level 3/.style={sibling distance=8mm}
]
\node {1}
 child 
  {
  node {1} 
     child { node {1} 
           child {node {1}
           }
           child {node {2}}
           child {node {3}}
           child {node {4}} 
     }
     child {
      node {2}
            child {node {1}
                }
            child {node {2}}
            child {node {3}} 
       }
     child { node {$3$} 
            child {node {1}
                 }
            child {node {2}}
            child {node {3}} 
 }
  }
 child 
  { 
 node {$2$} 
     child {node {1}
            child {node {1}
                }
            child {node {2}}
            child {node {3}} 
 } 
     child {node {$2$}
            child {node {1}
                 }
            child {node {2}}
     }
       }
;
\end{tikzpicture}
\end{example}

In the following we introduce some additional  notation about the trees $\ca_{N}$ which will be useful for our future computations. 
 \begin{notation}\label{notation.i}
 With a slight abuse of notation, we will write $\ca_{N}$ for both the tree $\ca_{N}$ and the collection of its branches. 
  For each branch $i $ in  $ \ca_{N}$ we denote   $|i|$ the number of vertices in $i$   and denote $i_{\tau}$   the $\tau$th label in  the branch $i$.   We  set  
 \begin{align*}
 \ell_{1}^{i} = \# \{ i_{\tau}=1: \tau =1,\dots, |i| \}  
 \quad \text{and} \quad
\ell_{r}^{i} = \# \{ i_{\tau}=r: \tau =1,\dots, |i| \} + 1  \quad \text{for } r=2,\dots, |i|.  
\end{align*}
  Also recall that we denote $\al_{i} =  \ell_{1}^{i}+1 $. 
    \end{notation}
    \begin{remark}\label{remark.suml}
    In the sequel we shall use the relation  
    \begin{eqnarray}\label{eqn.suml}
\sum_{r=2}^{\al_{i}} \ell^{i}_{r} =N ,  
\end{eqnarray}
valid for every tree $\ca_{N}$ from Definition \ref{def.ca}. Let us give a brief proof of this fact. According to the definition of $\ca_{N}$, for each branch $i$ the vertices  of $i$ are labeled by the numbers $1,2,\ldots,\al_{i}$. No vertex of $i$ is labeled a number $\al_{i}+1$ or larger. Therefore, for all $r>\al_{i}$ we have
 \begin{eqnarray*}
 \# \{ i_{\tau}=r: \tau =1,\dots, |i| \}=0, 
 \qquad\text{and} \qquad \ell_{r}^{i} =1 .  
\end{eqnarray*}
Moreover, by construction every branch $i$ in $\ca_{N}$ has length $|i|=N$. Thus 
 \begin{eqnarray*}
\sum_{r=1}^{\al_{i}} \# \{ i_{\tau}=r: \tau =1,\dots, |i| \}=|i|=N
 . 
\end{eqnarray*}
Because  $\al_{i}=\ell^{i}_{1}+1= \# \{ i_{\tau}=1: \tau =1,\dots, |i| \}+1$, the above becomes 
\begin{eqnarray*}
\sum_{r=2}^{\al_{i}} \left(\# \{ i_{\tau}=r: \tau =1,\dots, |i| \}\right)+\lp \al_{i}-1\rp =
\sum_{r=2}^{\al_{i}} \left(\# \{ i_{\tau}=r: \tau =1,\dots, |i| \}+1\right) =
N
 . 
\end{eqnarray*}
Otherwise stated, according to Notation \ref{notation.i} we obtain relation \eqref{eqn.suml}.

\end{remark}

 \begin{example}
 Let us follow up on Example \ref{ex:A4}, and see how Notation~\ref{notation.i} works on $\ca_{4}$. Namely $ (1,2,1,3)  $ and $ (1,2,1,1) $ are both branches in $\ca_{4}$. For those branches, the reader can easily check that we have
 \begin{align*}
&
\ell^{(1,2,1,3)}_{1} = 2, \quad 
\ell^{(1,2,1,3)}_{2} = 2 , \quad
\ell^{(1,2,1,3)}_{3} = 2 , \quad
\al_{(1,2,1,3)} = 3 
\\
&
\ell^{(1,2,1,1 )}_{1}=3, \quad
\ell^{(1,2,1,1 )}_{2}=2, \quad
\ell^{(1,2,1,1 )}_{3}=1, \quad
\ell^{(1,2,1,1 )}_{4}=1, \quad
\al_{(1,2,1,1 )}=4 . 
\end{align*}
It is easily checked   that the identity \eqref{eqn.suml} holds for these two branches. Namely  we have          $\ell^{(1,2,1,3)}_{2}+\ell^{(1,2,1,3)}_{3}=4$ and $\ell^{(1,2,1,1)}_{2}+\ell^{(1,2,1,1)}_{3}+\ell^{(1,2,1,1)}_{4}=4$. 
 \end{example}

 In order to state our differentiation rule for Malliavin derivatives, let us also label some notation about partial differentiation in $\RR^{m}$. 
 
 \begin{notation}\label{notationA}
 For a constant  $$A = (A^{p_{1}, 
 \dots,p_{k}};\,\,
  p_{1},\dots ,p_{k} =1\dots,m )\in (\RR^{m})^{\otimes k} = \RR^{m^{k}},$$ and   for $a^{(1)}$, \dots, $a^{(k)} \in \RR^{m}$
  we set 
\begin{eqnarray*}
   \langle A, a^{(1)} \otimes \dots \otimes a^{(k)} \rangle 
&:=
&    \sum_{p_{1}, \dots, p_{k}=1}^{m}  a^{(k)}_{p_{k}} \cdots a^{(1)}_{p_{1}} A^{p_{1},\dots,p_{k}} . 
\end{eqnarray*}

 \end{notation}
 
 \begin{notation}\label{notation.df}
 Let $f: y\to f(y)$ be a  continuous function from $\RR^{m } $ to $ \RR$. We denote by $\partial$ the differential operator from  $   C^{1}(\RR^{m})  $ to $\cl (\RR^{m}, \RR)= \cl (\RR^{m})$. That is 
for $a= (a_{1} , \dots, a_{m} )\in \RR^{m}$, we define $\langle\partial f,  a\rangle = \sum_{i=1}^{m} a_{i} \frac{\partial f}{\partial y^{(i)}} $.  
Note that  the space  $ \cl (\RR^{m})$ can also be identified with   $\RR^{m}$. Namely,   we can  write 
 \begin{eqnarray*}
\partial f := (\partial_{1} f, \dots, \partial_{m} f) := \lp \frac{\partial f}{\partial y^{(1)}}, \dots, \frac{\partial f}{\partial y^{(m)}}  \rp. 
\end{eqnarray*}
One can generalize this notation to higher order derivatives.  Specifically, 
 we denote by $\partial^{k}$ the   differential operator from  $   C^{k}(\RR^{m})  $ to $\cl ((\RR^{m})^{\otimes k}, \RR)$. Otherwise stated for $a^{(1)}$, \dots, $a^{(k)} \in \RR^{m}$ and $y\in \RR^{m}$ we define a vector 
 \begin{equation*}
\partial^{k} f (y) =  \lcl \frac{\partial^{k}f (y)}{ \partial y^{(p_{k})} \cdots\partial y^{(p_{1})}}; \, p_{1},\dots ,p_{k} =1\dots,m \rcl \, ,
\end{equation*}
so that one can write 
\begin{eqnarray}\label{eqn.partial}
 \langle
\partial^{k} f (y) , a^{(1)} \otimes \dots \otimes a^{(k)}\rangle  = \sum_{p_{1}, \dots, p_{k}=1}^{m}  a^{(k)}_{p_{k}} \cdots a^{(1)}_{p_{1}} \frac{\partial^{k}f (y)}{ \partial y^{(p_{k})} \cdots\partial y^{(p_{1})}}. 
\end{eqnarray}
Notice that  for $k=2$, $\partial^{2}f$ can also be identified with the matrix $ ( \frac{\partial^{2}f}{\partial y^{(i)} \partial y^{(j)} } )_{i,j=1,\dots, m} $.

 \end{notation}
 
 \subsection{A differentiation rule}\label{section.drule}
 With the preparation in the previous subsection, 
 we are now ready to state a lemma allowing to compute iterated Malliavin derivatives for a functional of the form $f(F)$, with a smooth enough function $f$ and random variable $F$.  
 Recall that  the Malliavin derivative operator $\bar{D}$ in the Cameron-Martin space $\bar{\ch}$ is introduced  in Section~\ref{subsection.d}. 
 
 \begin{lemma}\label{lem.dfg}
Let $f, g$ be continuous functions in $  C^{N}(\RR^{m})$ and let $F \in \bar{\DD}^{N,2} (\RR^{m})$, where the space $\bar{\DD}^{N,2} $ is introduced in Section \ref{subsection.d}. Then we have the following   identity:
 \begin{align}
\bar{D}_{\bar{h}}^{N}   f(F)  =
&
\sum_{ i \in  \ca_{N}}   
\langle
\partial^{\ell_{1}^{i}} f(F) ,  \bar{D}_{\bar{h}}^{\ell_{2}^{i}} F \otimes\cdots \otimes \bar{D}_{\bar{h}}^{\ell_{\al_{i}}^{i}} F 
\rangle 
\label{eqn.df}
\end{align}
for   $\bar{h}\in \bar{\ch}^{m}$, 
where the sum in the right side of \eqref{eqn.df} runs over the branches of $\ca_{N}$ as specified in Notation~\ref{notation.i}-\ref{notation.df}. As far as the product $f(F) g(F)$ is concerned, we get the following differentiation rule: 
\begin{align}
\bar{D}^{N}_{\bar{h}} (f\cdot g) (F) =& M_{1}(N)
  + M_{2}(N)
, 
 \label{eqn.dfg}
\end{align}
where
\begin{align*}
M_{1}(N) 
&= \sum_{ i \in  \ca_{N}}  
\left\langle 
  (\partial^{\ell_{1}^{i} } f  \cdot g  )(F)    ,  
 \bar{D}_{\bar{h}}^{\ell_{2}^{i}} F \otimes \cdots \otimes \bar{D}_{\bar{h}}^{\ell_{\al_{i}}^{i}} F  
\right\rangle
\\
M_{2}(N) 
&=  \sum_{ i \in  \ca_{N}} 
 \sum_{r=2}^{\al_{i}}    
 \left\langle
\partial^{\ell_{1}^{i}-1 } f(F)     ,  
 \bar{D}_{\bar{h}}^{\ell_{2}^{i}} F \otimes \cdots  \otimes 
 \bar{D}_{\bar{h}}^{\ell_{r}^{i}} g(F) \otimes  \cdots\otimes \bar{D}_{\bar{h}}^{\ell_{\al_{i}}^{i}} F 
 \right\rangle. 
\end{align*}

\end{lemma}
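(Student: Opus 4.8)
The plan is to prove \eqref{eqn.df} and \eqref{eqn.dfg} by induction on $N$, the only analytic ingredients being the classical chain rule and Leibniz rule for the Malliavin derivative. Recall from Section~\ref{subsection.d} that $\bar{D}_{\bar{h}}$ coincides with the directional derivative $D_{h}$ for a suitable $h\in\ch^{d}$; in particular it is a derivation and, for $F\in\bar{\DD}^{1,2}(\RR^{m})$ with enough integrability and $\phi\in C^{1}(\RR^{m})$, one has $\bar{D}_{\bar{h}}(\phi(F)) = \langle \partial\phi(F),\bar{D}_{\bar{h}}F\rangle$. Iterating this identity under the hypotheses $F\in\bar{\DD}^{N,2}$ and $f,g\in C^{N}$ is standard (see \cite{N}); I will use it freely, as well as the fact that $\langle\partial^{k}\phi(F),\cdot\rangle$ is invariant under permutations of its $k$ tensor arguments (mixed partials commute), which frees me from tracking the order of the factors $\bar{D}_{\bar{h}}^{\ell_{r}^{i}}F$. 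I also read $(\partial^{\ell_{1}^{i}}f\cdot g)(F)$ as $(\partial^{\ell_{1}^{i}}f)(F)\,g(F)$, and a scalar factor inside $\langle\,\cdot\,,\,\cdot\,\rangle$ (such as $\bar{D}_{\bar{h}}^{\ell_{r}^{i}}g(F)$, $g$ being $\RR$-valued) is simply pulled out.

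For \eqref{eqn.df}, the base case $N=1$ is exactly the chain rule, once one checks $\ca_{1}=\{(1)\}$ with $\ell_{1}^{(1)}=1$, $\al_{(1)}=2$, $\ell_{2}^{(1)}=1$. For the inductive step I apply $\bar{D}_{\bar{h}}$ to the right-hand side of \eqref{eqn.df} and expand each $i$-term $\langle\partial^{\ell_{1}^{i}}f(F),\bar{D}_{\bar{h}}^{\ell_{2}^{i}}F\otimes\cdots\otimes\bar{D}_{\bar{h}}^{\ell_{\al_{i}}^{i}}F\rangle$ by Leibniz: differentiating the factor $\partial^{\ell_{1}^{i}}f(F)$ raises its order to $\ell_{1}^{i}+1$ and appends a fresh slot $\bar{D}_{\bar{h}}^{1}F$, whereas differentiating the $r$-th slot replaces $\bar{D}_{\bar{h}}^{\ell_{r}^{i}}F$ by $\bar{D}_{\bar{h}}^{\ell_{r}^{i}+1}F$. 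The key combinatorial point — and the reason Definition~\ref{def.ca} is set up the way it is — is that these $1+\ell_{1}^{i}=\al_{i}$ terms are precisely the terms indexed by the children $(i,1),(i,2),\dots,(i,\al_{i})$ of $i$ in $\ca_{N+1}$, with the labels matching: for $(i,1)$ one has $\ell_{1}^{(i,1)}=\ell_{1}^{i}+1$ together with a new slot $\ell_{\al_{i}+1}^{(i,1)}=1$, and for $2\le r\le\al_{i}$ one has $\ell_{1}^{(i,r)}=\ell_{1}^{i}$, $\ell_{r}^{(i,r)}=\ell_{r}^{i}+1$, $\ell_{r'}^{(i,r)}=\ell_{r'}^{i}$ for $r'\ne r$. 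Since each branch of $\ca_{N+1}$ is a child of exactly one $i\in\ca_{N}$, summing over $i$ reproduces the right-hand side of \eqref{eqn.df} at level $N+1$.

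For \eqref{eqn.dfg}, I induct on $N$ again; the base case $N=1$ is the ordinary product rule and matches $M_{1}(1)+M_{2}(1)$. In the inductive step I differentiate $M_{1}(N)$ and $M_{2}(N)$ separately. Writing the $i$-term of $M_{1}(N)$ as $g(F)\,\langle\partial^{\ell_{1}^{i}}f(F),\bar{D}_{\bar{h}}^{\ell_{2}^{i}}F\otimes\cdots\otimes\bar{D}_{\bar{h}}^{\ell_{\al_{i}}^{i}}F\rangle$ and Leibniz-expanding, the contributions hitting $\partial^{\ell_{1}^{i}}f(F)$ or one of the $F$-slots reassemble — by the same bijection as above — into $M_{1}(N+1)$, while the contribution hitting $g(F)$ gives $(\bar{D}_{\bar{h}}g(F))\,\langle\partial^{\ell_{1}^{i}}f(F),\bar{D}_{\bar{h}}^{\ell_{2}^{i}}F\otimes\cdots\rangle$, which summed over $i\in\ca_{N}$ equals $(\bar{D}_{\bar{h}}g(F))\,\bar{D}_{\bar{h}}^{N}f(F)$ by \eqref{eqn.df}. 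Differentiating the $(i,r)$-term of $M_{2}(N)$ produces the $M_{2}(N+1)$-terms indexed by $((i,s),r)$ with $s=1,\dots,\al_{i}$ (the $g$-marked slot keeping the label $r$, and growing by one in size only when $s=r$); here one must pay attention to the extra slot created when $s=1$: the term $((i,1),\al_{i}+1)$ is $g$-marked precisely on the fresh slot of size $1$, hence equals $(\bar{D}_{\bar{h}}g(F))\,\langle\partial^{\ell_{1}^{i}}f(F),\bar{D}_{\bar{h}}^{\ell_{2}^{i}}F\otimes\cdots\rangle$, and after summing over $i$ it again equals $(\bar{D}_{\bar{h}}g(F))\,\bar{D}_{\bar{h}}^{N}f(F)$ by \eqref{eqn.df} — but this term does not arise in $\bar{D}_{\bar{h}}M_{2}(N)$, so $\bar{D}_{\bar{h}}M_{2}(N)=M_{2}(N+1)-(\bar{D}_{\bar{h}}g(F))\,\bar{D}_{\bar{h}}^{N}f(F)$. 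Adding the two relations, the terms $(\bar{D}_{\bar{h}}g(F))\,\bar{D}_{\bar{h}}^{N}f(F)$ cancel and we obtain $\bar{D}_{\bar{h}}(f(F)g(F))=M_{1}(N+1)+M_{2}(N+1)$, closing the induction.

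The hard part is purely bookkeeping: one has to be scrupulous in matching the $\ell_{r}^{i}$-labels produced by each Leibniz differentiation against the recursive rule of Definition~\ref{def.ca}, and — for \eqref{eqn.dfg} — in isolating the boundary child $(i,1)$ and its extra slot so as to witness the cancellation of the $(\bar{D}_{\bar{h}}g(F))\,\bar{D}_{\bar{h}}^{N}f(F)$ contributions. The permutation symmetry of $\partial^{k}\phi$ and the relation $\sum_{r=2}^{\al_{i}}\ell_{r}^{i}=N$ of Remark~\ref{remark.suml} (which certifies that every term carries the expected total number of derivatives of $F$) are the two structural facts that keep this under control. We note in passing that deriving \eqref{eqn.dfg} directly from \eqref{eqn.df} applied to the product $f\cdot g$ is not convenient, since the Leibniz rule for $\partial^{k}(fg)$ produces $g$-derivatives paired with individual slots rather than the single un-expanded factor $\bar{D}_{\bar{h}}^{\ell_{r}^{i}}g(F)$ occurring in $M_{2}$; the induction above circumvents this.
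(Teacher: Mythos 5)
Your proposal is correct and follows essentially the same route as the paper: induction on $N$, with the Leibniz expansion of each branch-term matched bijectively against the children $(i,1),\dots,(i,\al_i)$ in $\ca_{N+1}$, and, for \eqref{eqn.dfg}, the identification of the extra $M_2(N+1)$ terms indexed by $((i,1);\al_i+1)$ with the contribution from differentiating $g(F)$ inside $M_1(N)$ (the paper phrases this as the index-set identity $\cb_{1,L}\cup\cb_{2,L}=\bigcup_{i\in\ca_{L+1}}\bigcup_{r=2}^{\al_i}\{(i;r)\}$ rather than as a cancellation of $(\bar{D}_{\bar h}g(F))\,\bar{D}_{\bar h}^{N}f(F)$, but the content is identical).
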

\begin{remark}
Let us check the dimension compatibilities in the right side of \eqref{eqn.df}.  Since $F$ is $\RR^{m}$-valued, according to Notation \ref{notation.df} we have $\partial^{\ell^{i}_{1}} f(F) \in \cl ( (\RR^{m})^{\otimes \ell^{i}_{1}}; \RR ) $. Next each term $\bar{D}^{\ell^{i}_{r}}_{\bar{h}}F$ sits in $\RR^{m}$. Therefore,  $ \bar{D}^{\ell^{i}_{2}}_{\bar{h}}F \otimes \cdots \otimes \bar{D}^{\ell^{i}_{\al_{i}}}_{\bar{h}}F \in     \cl ( (\RR^{m})^{\otimes (\al_{i}-1)} )$. The compatibility of dimensions in \eqref{eqn.df} thus stems from the relation $\al_{i} = \ell^{i}_{1}+1$ (see Notation \ref{notation.i}). Similar considerations are also valid for equation \eqref{eqn.dfg}, taking into account the fact that $\bar{D}^{\ell^{i}_{r}}_{\bar{h}}F\in \RR^{m}$ and $\bar{D}^{\ell^{i}_{r}}_{\bar{h}}g(F)\in  \RR $. 
\end{remark}

\begin{remark}
In order to get a formula for $\bar{D}^{N}_{\bar{h}} f(F) $, we could have invoked some multivariate elaboration of Fa\`a Di Bruno's formula; see   \cite{CS, LP}. However, our tree type formulation is required in order to handle the computations in  Lemma \ref{lem.dhd} below. 
\end{remark}

\begin{remark}
Lemma \ref{lem.dfg} can easily be generalized in three directions:

\noindent\emph{(i)} We have stated \eqref{eqn.df} using the directional derivative $\bar{D}_{\bar{h}}$. The same formula holds true for the function-valued derivative $D_{r} f(F)$. 

 \noindent\emph{(ii)} Instead of the Malliavin derivative, we could have obtained \eqref{eqn.df} as a chain rule for any operator $A$ satisfying a Leibniz type rule of the form $A(f(F)) = f'(F)AF$. 
 
 \noindent\emph{(iii)} Instead of considering iterations of the same operation $A$, that is a formula for $A^{N}(f(F))$, one can obtain a formula like \eqref{eqn.df} for quantities of the form $A_{1}\circ \cdots \circ A_{N} (f(F))$, where each of the operators $A_{j}$ satisfies  $A_{j}(f(F))=f'(F) A_{j}F$.   
\end{remark}

\begin{proof}[Proof of Lemma \ref{lem.dfg}]
 We first show by induction that   \eref{eqn.df} is true.  To this aim, note that $\bar{D}_{\bar{h}}f(F)=\langle \partial f(F),  \bar{D}_{\bar{h}}F\rangle $. On the other hand, by Definition \ref{def.ca} we have $\ca_{1}=\{ (1) \}$ and according to Notation \ref{notation.i} we have  $\ell_{1}=\ell_{2}=1$ and $\al_{(1)}=2$. This concludes \eref{eqn.df} for $N=1$. 
 
 Now suppose that \eref{eqn.df} is true for  $N= L$. This means that $\bar{D}^{L}_{\bar{h}}f(F)$ is equal to the summation of quantities of the form $\langle
 \partial^{\ell_{1}} f(F) ,  \bar{D}_{\bar{h}}^{\ell_{2}} F \otimes \cdots \otimes \bar{D}_{\bar{h}}^{\ell_{\al_{i}}} F \rangle$    which are one-to-one corresponding to   the branches in $\ca_{L}$.  
 Consider   a generic term in this summation and differentiate it in a direction $h\in \bar{\ch}$. Dropping the superscript $i$ in $\ell^{i}_{r}$ for notational sake, we get  
 \begin{align}\label{eqn.di}
&\bar{D}_{\bar{h}} \lp
\langle
\partial^{\ell_{1}} f(F) ,  \bar{D}_{\bar{h}}^{\ell_{2}} F\otimes \cdots\otimes \bar{D}_{\bar{h}}^{\ell_{\al_{i}}} F
\rangle
\rp 
\nonumber
 \\
 &=
 \langle
  \partial^{\ell_{1}+1} f(F) 
 ,  \bar{D}_{\bar{h}}^{\ell_{2}} F \otimes\cdots \otimes  \bar{D}_{\bar{h}}^{\ell_{\al_{i}}} F\otimes \bar{D}_{\bar{h}}  F
 \rangle
 \nonumber
 \\
 &\qquad
 + \langle
 \partial^{\ell_{1}} f(F) ,  \bar{D}_{\bar{h}}^{\ell_{2}+1} F \otimes\cdots\otimes \bar{D}_{\bar{h}}^{\ell_{\al_{i}}} F\rangle+\cdots +
  \langle
 \partial^{\ell_{1}} f(F) ,  \bar{D}_{\bar{h}}^{\ell_{2}} F\otimes \cdots\otimes \bar{D}_{\bar{h}}^{\ell_{\al_{i}}+1} F
 \rangle .  
\end{align}
One can relate relation \eqref{eqn.di} to our tree  Definition \ref{def.ca} in the following way. 
Namely in the recursive step (ii) in Definition \ref{def.ca}, 
from $\ca_{L}$
we have 
 created a new tree by adding labeled offsprings to the branch $i$. Specifically we add the branches $(i,1)$, $(i,2)$, \dots, $(i,\al_{i})$, and we set $\al_{(i, 1)} = \al_{i}+1$ and $\al_{(i, r)} = \al_{i}$ for $r =2, \dots, \al_{i}$. 
This shows that after differentiation,  the term corresponding to the branch $i$ is replaced by $\al_{i}$ terms corresponding to  the branches: $(i,1 )$,  $(i,2)$,\dots,  $(i,\al_{i})$. These are exactly the branches in $\ca_{L+1}$ which overlap with $i$ in the first $|i|$ vertices. 
Here the sum \eqref{eqn.di} can also be written as 
\begin{eqnarray*}
\sum_{j\in K_{i}} \langle \partial^{\ell_{1}^{j}} f (F), \bar{D}^{\ell_{2}^{j}}_{\bar{h}}F \otimes \cdots \otimes  \bar{D}^{\ell_{\al_{j}}^{j}}_{\bar{h}}F  \rangle , 
\end{eqnarray*}
where the branches $j$ sit in a set $K_{i}$ defined by 
\begin{eqnarray*}
K_{i} = \{ (i, r): 1\leq r \leq \al_{i} \}. 
\end{eqnarray*}
Summing all those contributions we get
\begin{eqnarray}\label{eqn.dl1f}
\bar{D}^{L+1}_{\bar{h}}f(F) = 
\sum_{i\in \ca_{L}} \sum_{j\in K_{i}} \langle \partial^{\ell_{1}^{j}} f (F), \bar{D}^{\ell_{2}^{j}}_{\bar{h}}F \otimes \cdots \otimes  \bar{D}^{\ell_{\al_{j}}^{j}}_{\bar{h}}F  \rangle , 
\end{eqnarray}
from which it is easily seen that \eqref{eqn.df} holds up to order $L+1$. This finishes our induction procedure for \eqref{eqn.df}.

   Let us now consider   \eref{eqn.dfg}.  
   First, it is easy to verify that \eref{eqn.dfg} holds for $N=1$. Next suppose that \eref{eqn.dfg} is true for $N=L$.  Let us now differentiate the terms  in $M_{1}(L)$   on the right-hand side of \eref{eqn.dfg}. Still writing $\ell_{r}$ instead of $\ell_{r}^{i}$ for notational sake, we get:
   \begin{align}\label{eqn.h1h2}
&
\bar{D}_{\bar{h}} 
\lp
\langle
(\partial^{\ell_{1}} f\cdot g) (F) ,  \bar{D}_{\bar{h}}^{\ell_{2}} F \otimes\cdots\otimes \bar{D}_{\bar{h}}^{\ell_{\al_{i}}} F
\rangle
\rp
= 
H_{1}^{i}+H_{2}^{i}
,
\end{align}
where 
the term $H^{i}_{1}$ takes care of the differentiation of $g(F)$ in the left side of \eqref{eqn.h1h2}, while $H_{2}^{i}$ corresponds to the differentiation of $\partial^{\ell_{1}}f(F)$ and $\bar{D}^{\ell_{r}}_{\bar{h}} F$. Specifically, we get 
\begin{align*}
H_{1}^{i}
&=
 \langle
   \partial^{\ell_{1}} f(F) 
,  \bar{D}_{\bar{h}}^{\ell_{2}} F \otimes\cdots\otimes \bar{D}_{\bar{h}}^{\ell_{\al_{i}}} F 
\rangle
\cdot \bar{D}_{\bar{h}}  g(F), 
\end{align*}
while $H^{i}_{2}$ is obtained similarly to \eqref{eqn.di} as 
\begin{align*}
H^{i}_{2}&=  
  \langle
   (\partial^{\ell_{1}+1} f\cdot g) (F) 
,  \bar{D}_{\bar{h}}^{\ell_{2}} F \otimes\cdots\otimes \bar{D}_{\bar{h}}^{\ell_{\al_{i}}} F\otimes \cdots\otimes \bar{D}_{\bar{h}}  F 
\rangle
\\
&\quad +
\langle
 (\partial^{\ell_{1}} f\cdot g) (F) ,  \bar{D}_{\bar{h}}^{\ell_{2}+1} F\otimes \cdots\otimes \bar{D}_{\bar{h}}^{\ell_{\al_{i}}} F
 \rangle
+\cdots + 
\langle
(\partial^{\ell_{1}} f\cdot g) (F) ,  \bar{D}_{\bar{h}}^{\ell_{2}} F \otimes\cdots\otimes \bar{D}_{\bar{h}}^{\ell_{\al_{i}}+1} F
\rangle.   
\end{align*}
  Now we follow the same argument as  the one leading to \eqref{eqn.dl1f} to get
  \begin{align}\label{eqn.h2s}
\sum_{i\in \ca_{L}} H^{i}_{2} =  \sum_{i\in \ca_{L+1}}
\langle
(\partial^{\ell_{1}} f\cdot g) (F) ,  \bar{D}_{\bar{h}}^{\ell_{2}} F \otimes\cdots\otimes \bar{D}_{\bar{h}}^{\ell_{\al_{i}}} F
\rangle = M_{1}(L+1) . 
\end{align}
In order to complete the induction proof it remains to show that 
\begin{eqnarray}\label{eqn.dm}
 \sum_{i\in \ca_{L}} H^{i}_{1} + \bar{D}_{\bar{h}} M_{2}(L) = M_{2}(L+1). 
\end{eqnarray}
For this purpose  we consider the  map from $i\in \ca_{L}$ to $(i,1 )$. It is clear that this is a one-to-one mapping.  We conclude from this one-to-one correspondence   that 
  \begin{align}\label{eqn.h1s}
  \sum_{i\in \ca_{L}} H^{i}_{1}
= 
\sum_{j=(i, 1 ): \, i\in \ca_{L}} 
 \langle
   \partial^{\ell_{1}^{j}-1} f  (F) ,  \bar{D}_{\bar{h}}^{\ell_{2}^{j}} F \otimes\cdots\otimes \bar{D}_{\bar{h}}^{\ell_{\al_{j}-1}^{j}} F \otimes \bar{D}_{\bar{h}}^{\ell_{\al_{j}}^{j}} g(F) 
   \rangle,
\end{align} 
 where we recall that according to Notation \ref{notation.i} we have $\ell_{\al_{j}}^{j}=1$ for $j = (i, 1)$.

We turn to  the second summation $M_{2}(L)$ in \eref{eqn.dfg}. 
Note that each term in \eqref{eqn.dfg} corresponds to a couple $(i; r) $ where $i$ is a branch in $\ca_{L}$ and $r \in \{ 2, \dots, \al_{i} \}$ denotes the position for which a term of the form $\bar{D}_{\bar{h}}^{\ell^{i}_{r}}g(F)$ shows up. 
 By differentiating    $M_{2}(L)$ we see that the term corresponding to the couple $(i; r)$ is replaced by the terms corresponding to  $((i, 1 );  r)$, $((i, 2);  r)$, \dots, $((i, \al_{i});  r)$.
 Precisely, we have
 \begin{eqnarray}\label{eqn.b2}
\bar{D}_{\bar{h}} M_{2}(L) = \sum_{(j; r)\in \cb_{2,L}} 
\langle
  \partial^{\ell_{1}-1 } f(F)     ,  
 \bar{D}_{\bar{h}}^{\ell_{2}} F  \otimes\cdots\otimes 
 \bar{D}_{\bar{h}}^{\ell_{r}} g(F)  \otimes \cdots\otimes \bar{D}_{\bar{h}}^{\ell_{\al_{j}}} F
 \rangle ,
\end{eqnarray}
where
\begin{eqnarray*}
\cb_{2,L} = \bigcup_{i\in \ca_{L}}\bigcup_{r=2}^{\al_{i}} \{  ((i, 1 );  r) ,  ((i, 2);  r) , \dots,  ((i, \al_{i});  r)     \} . 
\end{eqnarray*}
Observe that  
in a similar way we can also   write \eqref{eqn.h1s} as   
\begin{eqnarray}\label{eqn.b1}
 \sum_{i\in \ca_{L}} H^{i}_{1}
= 
\sum_{(j;r)\in \cb_{1,L}} 
 \langle
   \partial^{\ell_{1}^{j}-1} f  (F) ,  \bar{D}_{\bar{h}}^{\ell_{2}^{j}} F \otimes\cdots\otimes \bar{D}_{\bar{h}}^{\ell_{\al_{j}-1}^{j}} F \otimes \bar{D}_{\bar{h}}^{\ell_{\al_{j}}^{j}} g(F) 
   \rangle, 
\end{eqnarray}
 where
 \begin{eqnarray*}
\cb_{1,L} = \bigcup_{i\in \ca_{L}}\{((i, 1 );  \al_{i}+1)  \} = \bigcup_{i\in \ca_{L}}\{((i, 1 );  \al_{(i,1)})  \}. 
\end{eqnarray*}

  On the other hand, note  that $\al_{(i, 2)} = \cdots = \al_{(i, \al_{i})} =\al_{i}  $, and  $\al_{(i, 1 )} = \al_{i}+1$. Therefore, we can express the tree $\ca_{L+1}$ as follows: 
\begin{equation}\label{eqn.set}
\bigcup_{i\in \ca_{L+1}}\bigcup_{r=2   }^{\al_{i}}\{ (i;  r)   \} =
\cb_{2,L}
 \cup
\cb_{1,L}.
\end{equation}

Observe that $\cb_{1, L}$ and $\cb_{2, L}$ corresponds to the terms in $H_{2}^{i}$ and $\bar{D}_{\bar{h}}M_{2}(L) $ thanks to \eqref{eqn.b1} and \eqref{eqn.b2}, while   the set $\bigcup_{i\in \ca_{L+1}}\bigcup_{r=2   }^{\al_{i}}\{ (i;  r)   \} $  corresponds to the terms in $M_{2}(L+1)$. We conclude from  identity \eqref{eqn.set} that relation \eqref{eqn.dm} holds.  
 The proof is now complete. 
\end{proof}

\subsection{An expression for the Malliavin derivatives of the Euler scheme}\label{section.xi}

In this subsection we come back to the solution $y$ of our rough differential equation \eqref{e1.1}. However, for notational sake, we shall omit from now the drift term $V_{0} $ in \eqref{e1.1}. Therefore  we are reduced to an equation of the form 
\begin{eqnarray}\label{e.sde}
y_{t} = a + \sum_{i=1}^{d} \int_{0}^{t} V_{i} (y_{s}) dx^{i}_{s}  \, .  
\end{eqnarray}
The corresponding Euler scheme 
  $y^{n}$ (given by \eqref{e4}) can now be expressed as:
\begin{align}\label{eqn.euler}
\delta y^{n}_{t_{k}t_{k+1}} 
=& V(y^{n}_{t_{k}})  x^{1}_{t_{k}t_{k+1}} + \frac12 \sum_{j=1}^{d} \partial V_{j} V_{j} (y^{n}_{t_{k}})  \Delta^{2H}
 ,
\end{align}
 where we recall that  $\Delta = T/n$ and $t_{k} = k\Delta$, where the notation $x^{1}=\delta x$ is introduced in Definition~\ref{def:rough-path}, and where we have used the notation in~\eqref{not:iterated-vector-field} for the quantities $\partial  V_{j}V_{j} (y)$. Notice that for $t\in \ll0, T\rr$, the approximation $y^{n}_{t}$ can also be written as  
 \begin{align}\label{eqn.euler2}
  y^{n}_{ t } 
=& y_{0}+ \sum_{0\leq t_{k}<t}V(y^{n}_{t_{k}})  x^{1}_{t_{k}t_{k+1}} + \frac12 \sum_{0\leq t_{k}<t}\sum_{j=1}^{d} \partial V_{j} V_{j} (y^{n}_{t_{k}})  \Delta^{2H}
 . 
\end{align}
The aim of this section is to find a proper expression for the Malliavin derivatives of $y^{n}$.

\begin{remark}\label{remark.inamaha}
In order to show upper bounds  on  Malliavin derivatives $\|\bar{D}^{L} y^{n}_{t}\|_{\bar{\ch}^{\otimes L}}$ of the Euler scheme  we borrow Inamaha's approach in \cite{Inahama}. 
However, a very special attention to combinatoric issues will have to be paid, due to the fact that we are considering a discrete equation. We have prepared the ground for this in                           Section \ref{section.tree} and  Section     \ref{section.drule}. Furthermore, note that  the uniform continuity in $n$ of Lyons-It\^o's map fails  in our discrete context. Hence    the upper-bound estimates for equation \eqref{eqn.euler}  have to be treated differently for   small and large       
step sizes of the Euler scheme; see Section \ref{section.bound}. 
  \end{remark} 
  
  One of the basic ideas in \cite{Inahama} is to use an independent copy $b$ of the fBm $x $ in order to obtain norms in the Cameron-Martin space $\bar{\ch}$. With this consideration in mind, we now define a family of processes which will be at the heart of our computations of Malliavin derivatives.   
  We start by introducing a family of operators which will be useful for our future definitions. 
  \begin{notation}\label{notation.l}
Let $y^{n}$ be the numerical scheme defined in \eqref{eqn.euler}. Let $f: \RR^{m}\to \RR$ be   a smooth function. 
For each $l=1,\dots, L$, we let   $\xi^{l}_{t} $, $t\in \ll 0,T \rr$ be a process with   values in $ \RR^{m}$. We denote   the process  
       $\xi_{t} = (\xi^{1}_{t},\dots,\xi^{L}_{t}) \in \RR^{Lm}$, $t\in \ll 0,T \rr$. 

Recall that the trees $\ca_{L}$ are introduced in Definition \ref{def.ca}. 
For each $i\in \ca_{L}$ we denote by $c_{L,i}$ and $\tilde{c}_{L,i}$ some    constant depending on $L$ and $i$.  
Also recall our Notation \ref{notationA}-\ref{notation.df}. 
For $\ell_{1} = \ell_{1}^{i}$ corresponding to a branch $i\in \ca_{L}$, we have to consider $\partial^{\ell_{1}}f (y) $  as an element of $\cl ( (\RR^{m})^{\otimes \ell_{1}} ; \RR )$. In addition, still for a branch $i\in \ca_{L}$, owing to the relation $\ell_{1} = \al_{i}-1$ we have 
 $ \xi^{\ell_{2}}_{s} \otimes \cdots \otimes   \xi^{\ell_{\al_{i}}}_{s} \in (\RR^{m})^{\otimes (\al_{i}-1)} = (\RR^{m})^{\otimes\ell_{1}} $. 
 With these elementary algebra considerations in mind, 
  we define the following notation  
   \begin{eqnarray}
\cl^{L}_{\xi,c} f(y^{n}_{t_{k}}) 
&:=
&
 \sum_{ i \in  \ca_{L}}   
c_{L,i}
\langle
\partial^{\ell_{1}} f(y^{n}_{t_{k}} ) , \xi^{\ell_{2}}_{t_{k}} \otimes\cdots \otimes\xi^{\ell_{\al_{i}}}_{t_{k}} 
\rangle
\nonumber
\\
&=
&  \sum_{ i \in  \ca_{L}}   
c_{L,i}\sum_{p_{1}, \dots, p_{\ell_{1} }=1}^{m}  \xi_{t_{k}}^{\ell_{2},p_{1}} \cdots \xi_{t_{k}}^{\ell_{\al_{i}},p_{\ell_{1}}} \frac{\partial^{\ell_{1} }f (y^{n}_{t_{k}})}{ \partial y^{(p_{\ell_{1} })} \cdots\partial y^{(p_{1})}} 
.
\label{eqn.ln}
\end{eqnarray}

Moreover, for smooth functions $f:\RR^{m}\to \RR$ and $g:\RR^{m}\to \RR^{m}$ we     set 
 \begin{align}
\bar{\cl}_{\xi,c}^{L}(\partial f \cdot g) (y^{n}_{t_{k}} ) := 
&
\sum_{ i \in  \ca_{L}}   
c_{L,i}
\langle
(\partial^{\ell_{1}+1 } f \cdot  g)(y^{n}_{t_{k}}  )    ,  
 \xi_{t_{k}}^{\ell_{2}}   \otimes \cdots  \otimes \xi_{t_{k}}^{\ell_{\al_{i}}}   
 \rangle
\nonumber
 \\
 & 
 +  \sum_{ i \in  \ca_{L}} 
\sum_{r=2}^{\al_{i}}
 c_{L,i}   
 \langle 
\partial^{\ell_{1}  } f(y^{n}_{t_{k}} )     ,  
 \xi_{t_{k}}^{\ell_{2}}    \otimes\cdots \otimes
 \cl^{\ell_{r}}_{\xi,c} g(y^{n}_{t_{k}} )  \otimes \cdots  \otimes\xi_{t_{k}}^{\ell_{\al_{i}}} 
 \rangle  
 , 
 \label{eqn.lbn}
 \\
 \tilde{\cl}^{L}_{\xi,c,\tilde{c}} (\partial f\cdot g) (y^{n}_{t_{k}}  )    
 =
 &
\sum_{i\in \ca_{L}} 
\sum_{r=2}^{\al_{i}} 
c_{L,i} 
\langle
\partial^{\ell_{1} } f(y^{n}_{t_{k}}  )     ,  
 \xi^{\ell_{2}}_{_{t_{k}}} \otimes \cdots \otimes
 \cl^{\ell_{r}-1}_{\xi,\tilde{c}} g(y^{n}_{t_{k}}  ) \otimes  \cdots \otimes\xi^{\ell_{\al_{i}}}_{_{t_{k}}}  
 \rangle
    ,
    \label{eqn.ltn}
\end{align}
where $$ \partial^{\ell_{1}+1} f \cdot g (y)
 = 
 \lp\sum_{p_{\ell_{1}+1}=1}^{m} \frac{\partial^{\ell_{1}+1}f (y)}{ \partial y^{(p_{\ell_{1}+1})} \cdots\partial y^{(p_{1})}} g^{p_{\ell_{1}+1}}(y) , \quad p_{1},\dots ,p_{\ell_{1}} =1,\dots,m \rp
  $$ and  
 \begin{eqnarray*}
 \langle
 \partial^{\ell_{1}+1} f \cdot g(y^{n}_{t_{k}} ) , \xi^{\ell_{2}}_{t_{k}} \otimes \cdots \otimes \xi^{\ell_{\al_{i}}}_{t_{k}} 
 \rangle   = \sum_{p_{1}, \dots, p_{\ell_{1}+1}=1}^{m}  \xi_{t_{k}}^{\ell_{2},p_{1}} \cdots \xi_{t_{k}}^{\ell_{\al_{i}},p_{\ell_{1}}} \frac{\partial^{\ell_{1}+1}f (y^{n}_{t_{k}} )}{ \partial y^{(p_{\ell_{1}+1})} \cdots\partial y^{(p_{1})}} g^{p_{\ell_{1}+1}}(y^{n}_{t_{k}} ). 
 \end{eqnarray*}
We also  set $\cl_{\xi,c}^{0}=\bar{\cl}_{\xi,c}^{0}=\id$ and $ \tilde{\cl}_{\xi,c,\tilde{c}}^{0}=\tilde{\cl}_{\xi,c,\tilde{c}}^{-1}=\cl_{\xi,c}^{-1}=\bar{\cl}_{\xi,c}^{-1}=0$. 
\end{notation}

 In order to be able to differentiate our processes of interest in the Malliavin calculus sense, let us label the following regularity assumption on the vector fields $V_{i}$. 

\begin{hyp}\label{hyp:regularity-V}
The vector fields $ V_1,\ldots,V_d$ are $C_b^{(L+2)\vee 3}(\R^m)$ (bounded together with all their derivatives up to order $(L+2)\vee 3$) for    $L\geq 0$.
\end{hyp}  

We can now define a family of paths which will encode the expressions for the Malliavin derivatives of the Euler scheme.

\begin{Def}\label{def.xi}
For $n\geq 1$ we consider the Euler scheme $y^{n}$ given by \eqref{eqn.euler}, and recall that $b$ designates a fBm independent of $x$. Let $\cl^{L}$, $\bar{\cl}^{L}$, $\tilde{\cl}^{L}$ be the operators introduced  in Notation \ref{notation.l}. 
Then for $L\geq 0$ we define a discrete process $\Xi^{L}$
defined for $t=t_{k}$ and taking values in $\RR^{m}$, 
 given similarly to \eqref{eqn.euler2} by   the iterative equation 
\begin{multline}\label{eqn.xin}
\delta \Xi^{L}_{ t_{k}t_{k+1} } 
  =     \cl_{\Xi,c}^{L}   V(y^{n}_{t_{k}})    \delta x_{t_{k}t_{k+1}}
 +
    \cl^{L-1}_{\Xi,\tilde{c}} V(y^{n}_{t_{k}})  \delta b_{t_{k}t_{k+1}} 
\\
+ 
\frac12  \sum_{j=1}^{d}  \bar{\cl}_{\Xi,c}^{L}
\lp \partial V_{j}\cdot V_{j}\rp (y^{n}_{t_{k}}) 
  \Delta^{2H}
+ 
\frac 12  \sum_{j=1}^{d}  \tilde{\cl}_{\Xi,c,\tilde{c}}^{L-1}
\lp \partial V_{j}\cdot V_{j}\rp (y^{n}_{t_{k}}) 
  \Delta^{2H}
,
\end{multline}
or in the integral form  
\begin{multline*}
 \Xi^{L}_{ t } 
  =   \Xi^{L}_{ t_{0} } +  \sum_{t_{0}\leq t_{k}<t}\cl_{\Xi,c}^{L}   V(y^{n}_{t_{k}})    \delta x_{t_{k}t_{k+1}}
 +
   \sum_{t_{0}\leq t_{k}<t}  \cl^{L-1}_{\Xi,\tilde{c}} V(y^{n}_{t_{k}})  \delta b_{t_{k}t_{k+1}} 
\\
+ 
\frac12 \sum_{t_{0}\leq t_{k}<t}\sum_{j=1}^{d}  \bar{\cl}_{\Xi,c}^{L}
\lp \partial V_{j}\cdot V_{j}\rp (y^{n}_{t_{k}}) 
  \Delta^{2H}
+ 
\frac 12  \sum_{t_{0}\leq t_{k}<t}\sum_{j=1}^{d}  \tilde{\cl}_{\Xi,c,\tilde{c}}^{L-1}
\lp \partial V_{j}\cdot V_{j}\rp (y^{n}_{t_{k}}) 
  \Delta^{2H}
, 
\end{multline*}
where $t_{0}\in \ll0,T\rr$ is the initial time of the iteration equation  and  $c=(c_{L,i},i\in\ca_{L})$ and $c=(\tilde{c}_{L,i},i\in\ca_{L})$ are some constants.  

 Note that we apply  $\cl^{L}_{\Xi,c}$ to every component of $V$ (i.e. $f=V^{i}_{j}$ for each $i$ and $j$) in order to get a $\RR^{m}$-valued element $\cl_{\Xi,c}^{L} V(y^{n}_{t_{k}})\delta x_{t_{k}t_{k+1}}$ in the right-hand side of \eqref{eqn.xin}.  
Precisely, we have   $\cl^{L}_{\Xi,c}   V(y) = ( \cl^{L}_{\Xi,c}  V^{i}_{j} , i=1\dots, m, j=1,\dots, d)$.  $ \bar{\cl}^{L}_{\Xi,c} (\partial V_{j} \cdot V_{j}) (y ) $ and $ \tilde{\cl}^{L}_{\Xi,c,\tilde{c}} (\partial V_{j}\cdot V_{j}) (y ) $ should be interpreted  in the  same way.

\end{Def}
 
 We now state our general expression for the Malliavin derivatives of the Euler scheme. In the following, for conciseness we   drop  the subscript $(\Xi,c)$ of $\cl^{L}_{\Xi,c}$ and simply write $\cl^{L}$. This simplification is also  applied  to $\bar{\cl}^{L}_{\Xi,c}$  
 and $\tilde{\cl}^{L}_{\Xi,c,\tilde{c}}$.  
 \begin{lemma}\label{lem.dhd}
For $n\geq 1$ let $y^{n}$ be the Euler scheme defined by \eqref{eqn.euler2}. Assume that Hypothesis \ref{hyp:regularity-V} holds for     $L\geq 1$.  Recall that a notation $\bar{D}_{\bar{h}}$ has been introduced in Section \ref{subsection.d} for the Malliavin derivative with respect to the fBm $x$. We also write $\hat{{D}}_{\bar{h}}$ for the directional derivative with respect to the independent fBm $b$. Let $\Xi^{L}$ be the process   introduced    in Definition \ref{def.xi} with $c_{L,i} =\tilde{c}_{L,i} = \frac{\ell_{2}!\cdots \ell_{\al_{i}}!}{L !} $ for all $i\in \ca_{L}$.
 Then 
 for all $t\in \ll0,T\rr$
  the iterated derivative \eqref{eqn.dlf} of $y^{n}_{t} $   can be expressed as 
\begin{align}\label{eqn.dxidy}
   \bar{D}^{L}_{\bar{h}} y^{n}_{t}=\hat{{D}}^{L}_{\bar{h}}  \Xi^{L}_{t} .  
\end{align}
\end{lemma}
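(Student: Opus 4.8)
The plan is to establish \eqref{eqn.dxidy} by a double induction: an outer induction on the order $L$ and, for each fixed $L$, an inner induction on the time index $k$ (recall $t=t_k$). The base case $L=0$ is immediate: since $\cl^{0}=\bar{\cl}^{0}=\id$ and $\tilde{\cl}^{0}=\cl^{-1}=\tilde{\cl}^{-1}=\bar{\cl}^{-1}=0$, the recursion \eqref{eqn.xin} defining $\Xi^{0}$ reduces to the Euler recursion \eqref{eqn.euler}, whence $\Xi^{0}=y^{n}$ and \eqref{eqn.dxidy} is trivial. For $L\geq 1$ we assume \eqref{eqn.dxidy} holds (at every time) for all orders $\ell<L$, and prove it for $L$ by induction on $k$, assuming it holds for $\Xi^{L}$ at the times $t_{0},\dots,t_{k}$. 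A key auxiliary fact, proved by the same kind of double induction directly from \eqref{eqn.xin}, is that for fixed $x$ the variable $\Xi^{\ell}_{t}$ is a polynomial of degree at most $\ell$ in the increments of the independent fBm $b$; equivalently
\[
\hat{D}^{j}_{\bar{h}}\Xi^{\ell}_{t}=0 \qquad\text{whenever } j>\ell .
\]
Indeed one only needs to read off the $b$-degree of each term in \eqref{eqn.xin}: a factor $\cl^{\ell}_{\Xi}$ (resp. $\cl^{\ell-1}_{\Xi}$) carries $b$-degree at most $\ell$ (resp. $\ell-1$) by Remark \ref{remark.suml}, the extra $\delta b$ raises it by one, and the $\bar{\cl}^{L}$, $\tilde{\cl}^{L-1}$ contributions carry $b$-degree at most $L$ and $L-2$ respectively.

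The heart of the matter is to show that $\bar{D}^{L}_{\bar{h}}y^{n}$ and $\hat{D}^{L}_{\bar{h}}\Xi^{L}$ solve the same discrete equation with the same initial datum. Differentiating the Euler recursion \eqref{eqn.euler} with the Leibniz rule for iterated derivatives, and using that $\bar{D}_{\bar{h}}(\delta x_{t_{k}t_{k+1}})=\delta\bar{h}_{t_{k}t_{k+1}}$ is deterministic while $\bar{D}^{2}_{\bar{h}}(\delta x)=0$, gives
\[
\delta\big(\bar{D}^{L}_{\bar{h}}y^{n}\big)_{t_{k}t_{k+1}} = \big(\bar{D}^{L}_{\bar{h}}V(y^{n}_{t_{k}})\big)\,\delta x_{t_{k}t_{k+1}} + L\,\big(\bar{D}^{L-1}_{\bar{h}}V(y^{n}_{t_{k}})\big)\,\delta\bar{h}_{t_{k}t_{k+1}} + \frac12\sum_{j=1}^{d}\big(\bar{D}^{L}_{\bar{h}}(\partial V_{j}V_{j})(y^{n}_{t_{k}})\big)\,\Delta^{2H},
\]
and each factor $\bar{D}^{L}_{\bar{h}}V(y^{n}_{t_{k}})$, $\bar{D}^{L-1}_{\bar{h}}V(y^{n}_{t_{k}})$, $\bar{D}^{L}_{\bar{h}}(\partial V_{j}V_{j})(y^{n}_{t_{k}})$ is expanded through Lemma \ref{lem.dfg} into a sum over the branches of $\ca_{L}$ (resp. $\ca_{L-1}$), with the resulting tensors contracted against the lower-order Malliavin derivatives $\bar{D}^{\ell_{r}}_{\bar{h}}y^{n}_{t_{k}}$ and $\bar{D}^{\ell_{r}}_{\bar{h}}V_{j}(y^{n}_{t_{k}})$. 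On the other side, since $y^{n}$ is a functional of $x$ only, applying $\hat{D}^{L}_{\bar{h}}$ to \eqref{eqn.xin} and using $\hat{D}_{\bar{h}}(\delta b_{t_{k}t_{k+1}})=\delta\bar{h}_{t_{k}t_{k+1}}$, $\hat{D}^{2}_{\bar{h}}(\delta b)=0$, together with the degree bound above (which annihilates both the top-order Leibniz term of the $\delta b$ contribution and the entire $\tilde{\cl}^{L-1}$ contribution, whose $b$-degree is only $L-2$), yields
\[
\delta\big(\hat{D}^{L}_{\bar{h}}\Xi^{L}\big)_{t_{k}t_{k+1}} = \big(\hat{D}^{L}_{\bar{h}}\cl^{L}_{\Xi}V(y^{n}_{t_{k}})\big)\,\delta x_{t_{k}t_{k+1}} + L\,\big(\hat{D}^{L-1}_{\bar{h}}\cl^{L-1}_{\Xi}V(y^{n}_{t_{k}})\big)\,\delta\bar{h}_{t_{k}t_{k+1}} + \frac12\sum_{j=1}^{d}\big(\hat{D}^{L}_{\bar{h}}\bar{\cl}^{L}_{\Xi}(\partial V_{j}V_{j})(y^{n}_{t_{k}})\big)\,\Delta^{2H}.
\]

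It then remains to match the two displays term by term, and this is where the choice $c_{L,i}=\tilde{c}_{L,i}=\ell_{2}^{i}!\cdots\ell_{\al_{i}}^{i}!/L!$ becomes decisive. Because $y^{n}$ depends on $x$ alone, $\hat{D}^{L}_{\bar{h}}$ in $\cl^{L}_{\Xi}V(y^{n}_{t_{k}})=\sum_{i\in\ca_{L}}c_{L,i}\langle\partial^{\ell_{1}^{i}}V(y^{n}_{t_{k}}),\Xi^{\ell_{2}^{i}}_{t_{k}}\otimes\cdots\otimes\Xi^{\ell_{\al_{i}}^{i}}_{t_{k}}\rangle$ only hits the tensor factors $\Xi^{\ell_{r}^{i}}_{t_{k}}$; expanding this by the multinomial Leibniz rule and invoking $\sum_{r=2}^{\al_{i}}\ell_{r}^{i}=L$ (Remark \ref{remark.suml}) together with $\hat{D}^{j}_{\bar{h}}\Xi^{\ell}=0$ for $j>\ell$, only the diagonal term survives, and its multinomial coefficient $L!/(\ell_{2}^{i}!\cdots\ell_{\al_{i}}^{i}!)$ cancels $c_{L,i}$. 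Combined with the induction hypotheses $\hat{D}^{\ell_{r}}_{\bar{h}}\Xi^{\ell_{r}}_{t_{k}}=\bar{D}^{\ell_{r}}_{\bar{h}}y^{n}_{t_{k}}$ (the outer one when $\ell_{r}<L$, the inner one at time $t_{k}$ when $\ell_{r}=L$), this gives $\hat{D}^{L}_{\bar{h}}\cl^{L}_{\Xi}V(y^{n}_{t_{k}})=\bar{D}^{L}_{\bar{h}}V(y^{n}_{t_{k}})$ by \eqref{eqn.df}, and the identical computation yields $\hat{D}^{L-1}_{\bar{h}}\cl^{L-1}_{\Xi}V(y^{n}_{t_{k}})=\bar{D}^{L-1}_{\bar{h}}V(y^{n}_{t_{k}})$. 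For the $\Delta^{2H}$ term one uses the defining formula \eqref{eqn.lbn} of $\bar{\cl}^{L}$ and applies the product rule \eqref{eqn.dfg} componentwise to the scalar products building up $\partial V_{j}\cdot V_{j}$, the inner $\cl^{\ell_{r}}_{\Xi}V_{j}$ becoming $\bar{D}^{\ell_{r}}_{\bar{h}}V_{j}(y^{n}_{t_{k}})$ after $\hat{D}^{L}_{\bar{h}}$ via the same collapsing argument; this produces $\hat{D}^{L}_{\bar{h}}\bar{\cl}^{L}_{\Xi}(\partial V_{j}V_{j})(y^{n}_{t_{k}})=\bar{D}^{L}_{\bar{h}}(\partial V_{j}V_{j})(y^{n}_{t_{k}})$. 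Hence the two recursions coincide; together with matching initial data (so that \eqref{eqn.dxidy} holds at the starting time $t_{0}$, taking $\Xi^{0}_{t_{0}}=y^{n}_{t_{0}}$ and $\Xi^{L}_{t_{0}}$ a deterministic, hence $b$-free, vector for $L\geq 1$), the inner induction closes and \eqref{eqn.dxidy} follows.

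The main obstacle is precisely the matching carried out in the last paragraph: verifying that the algebra of the operators $\bar{\cl}^{L}$ and $\tilde{\cl}^{L-1}$, together with the cancellation of the constants $c_{L,i}$ against the multinomial coefficients produced by the multinomial Leibniz rule, reproduces exactly the branch expansion of $\bar{D}^{L}_{\bar{h}}(\partial V_{j}V_{j})(y^{n}_{t_{k}})$ furnished by Lemma \ref{lem.dfg}, including all the cross terms coming from the product $\partial V_{j}\cdot V_{j}$. The degree bound $\hat{D}^{j}_{\bar{h}}\Xi^{\ell}=0$ for $j>\ell$ is the structural fact that makes all of this collapse to a single term, so establishing it rigorously (again by double induction, tracking $b$-degrees through \eqref{eqn.xin}) is a necessary preliminary.
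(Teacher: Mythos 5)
Your proposal is correct and follows essentially the same route as the paper: establish that $\Xi^{\ell}$ lives in the $\ell$-th chaos of $b$ (so $\hat{D}^{j}_{\bar h}\Xi^{\ell}=0$ for $j>\ell$), use the multinomial Leibniz collapse so that the constants $c_{L,i}=\ell_2!\cdots\ell_{\al_i}!/L!$ cancel the surviving coefficient $L!/(\ell_2!\cdots\ell_{\al_i}!)$, and match the differentiated recursions for $\hat D^L_{\bar h}\Xi^L$ and $\bar D^L_{\bar h}y^n$ term by term via Lemma \ref{lem.dfg}. Your only deviation is to make explicit the inner induction on the time index $t_k$ (needed because branches with $\ell_r=L$ feed $\hat D^L\Xi^L_{t_k}$ back into the right-hand side), a step the paper leaves implicit in its claim that the two expressions coincide.
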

\begin{proof}
According to Definition \ref{def.xi} and recalling our convention $\cl^{-1}=\tilde{\cl}^{-1}=0$, it is straightforward to see that $\Xi^{0} = y^{n}$. 

For $L\geq 1$, consider the process $\Xi^{L}$ defined by \eqref{eqn.xin}. Our next endeavor is to find a difference equation satisfied by $\hat{{D}}^{L}\Xi^{L}$. To this aim we differentiate the terms in the right-hand side of \eqref{eqn.xin}. Note   that $y^{n}$ does not depend on $b$ and therefore $\hat{{D}} \lc \partial^{\ell_{1}} V(y^{n}_{t_{k}}) \rc=0$. 
We now prove that $\Xi^{\ell}$ belongs to the $\ell$-th Wiener chaos of $b$, which will be denoted  by $\mathcal{K}^{b}_{\ell}$ (similarly to that of $x$ in  Section \ref{subsection.d}). This can be done recursively on $L$ using relation~\eqref{eqn.xin}. Namely   we assume that $\Xi^{p}_{t_{k}} \in \mathcal{K}^{b}_{p}$ for all $p\leq L-1$.  It can be checked that the terms on the right-hand side of \eqref{eqn.xin} belongs to $\ck^{b}_{L}$. For sake of conciseness, let us focus on  the following  term of \eqref{eqn.xin} (the other terms being left to the patient reader): 
\begin{eqnarray}\label{eqn.lvb}
\cl^{L-1}_{\Xi, \tilde{c}} V(y^{n}_{t_{k}}) \delta b_{t_{k}t_{k+1}}
=
\sum_{i\in \ca_{L-1}} \tilde{c}_{L-1, i} 
\left\langle
\partial^{\ell_{1}}V(y^{n}_{t_{k}}) , \, \Xi^{\ell_{2}}_{t_{k}} \otimes \cdots \otimes  \Xi^{\ell_{\al_{i}}}_{t_{k}}
\right\rangle 
\delta b_{t_{k}t_{k+1}}. 
\end{eqnarray}
  By the induction assumption  the generic term on the right-hand side of \eqref{eqn.lvb} is in the chaos 
\begin{eqnarray}\label{eqn.hbl}
\mathcal{K}^{b}_{\sum_{p=2}^{\al_{i}}\ell_{p}+1} = \mathcal{K}^{b}_{(L-1)+1} = \mathcal{K}^{b}_{L}, 
\end{eqnarray}
where we have invoked Remark \ref{remark.suml} for the second equation. Relation \eqref{eqn.hbl} thus proves that $\Xi^{\ell}_{t_{k}}\in \mathcal{K}^{b}_{\ell}$ by induction. In particular, $\hat{{D}}_{\bar{h}}^{\ell'} \Xi^{\ell}=0$ if $\ell'>\ell$.

In order to differentiate the right-hand side of \eqref{eqn.xin}, we need to differentiate generic terms of the form $\Xi^{\ell_{2}}_{t_{k}}\otimes \cdots \otimes \Xi^{\ell_{\al_{i}}}_{t_{k}}$ for $i \in \ca_{L}$. It is easily seen that 
\begin{eqnarray*}
\hat{{D}}^{L}_{\bar{h}}\lp \Xi^{\ell_{2}}_{t_{k}} 
\otimes \cdots \otimes \Xi^{\ell_{\al_{i}}}_{t_{k}}\rp
=
\sum_{(p_{2},\dots,p_{\al_{i}}):
p_{2}+\dots+p_{\al_{i}}=L } 
\frac{L!}{p_{2}!\cdots p_{\al_{i}}!}
\hat{D}^{p_{2}}_{\bar{h}}\Xi^{\ell_{2}}_{t_{k}} 
\otimes \cdots \otimes 
\hat{D}^{p_{\al_{i}}}_{\bar{h}}\Xi^{\ell_{\al_{i}}}_{t_{k}} . 
\end{eqnarray*}
However, if $(p_{2},\dots,p_{\al_{i}}) \neq (\ell^{i}_{2}, \dots, \ell^{i}_{\al_{i}})$, at least one of the $\ell^{i}_{j}$ will be larger than $p_{j}$, yielding a null contribution. Therefore the only surviving term in the sum above is  
\begin{eqnarray}\label{eqn.dxidxi}
\hat{{D}}_{\bar{h}}^{L} [  \Xi^{\ell_{2}}_{t_{k}} \otimes \cdots \otimes \Xi^{\ell_{\al_{i}}}_{t_{k}} ]  =  
\frac{L !}{\ell_{2}!\cdots \ell_{\al_{i}}!}\cdot
\hat{{D}}_{\bar{h}}^{\ell_{2}}\Xi^{\ell_{2}}_{t_{k}} \otimes \cdots \otimes\hat{{D}}_{\bar{h}}^{\ell_{\al_{i}}}\Xi^{\ell_{\al_{i}}}_{t_{k}}
. 
\end{eqnarray}
The reader is  referred  to \cite{Inahama} for more details about the above  computation. 
 Note that the number of ways to assign the $L$ operators $\hat{{D}}_{\bar{h}}$,\dots, $\hat{{D}}_{\bar{h}}$  into  groups of sizes $\ell_{2}$,\dots, $\ell_{\al_{i}}$ is $\frac{L !}{\ell_{2}!\cdots \ell_{\al_{i}}!}$, which explains the multiplicative constant in the equation. 
 
 With \eqref{eqn.dxidxi} in hand, we are now ready to   differentiate the right-hand side of \eqref{eqn.xin}. For the first term, using definition \eqref{eqn.ln} of  $\cl^{L}$
 we get
 \begin{align*}
\hat{{D}}_{\bar{h}}^{L}  \cl^{L} V(y^{n}_{t_{k}}) = \sum_{ i \in  \ca_{L}}   
\langle
\partial^{\ell_{1}} V(y^{n}_{t_{k}}),  
 \hat{{D}}_{\bar{h}}^{\ell_{2}}\Xi^{\ell_{2}}_{t_{k}} \otimes \cdots \otimes\hat{{D}}_{\bar{h}}^{\ell_{\al_{i}}}\Xi^{\ell_{\al_{i}}}_{t_{k}}
 \rangle.  
\end{align*}
Along the same lines and resorting to \eqref{eqn.lbn} for the definition of $\bar{\cl}^{L}$, it is easily checked that  
\begin{eqnarray*}
\hat{{D}}_{\bar{h}}^{L}\bar{\cl}^{L}(\partial V \cdot V) (y^{n}_{t_{k}}) &= 
&
\sum_{ i \in  \ca_{L}}  
\langle 
(\partial^{\ell_{1}+1 } V \cdot V)(y^{n}_{t_{k}})    ,  
 \hat{{D}}_{\bar{h}}^{\ell_{2}}\Xi^{\ell_{2}}_{t_{k}}  \otimes \cdots \otimes  \hat{{D}}_{\bar{h}}^{\ell_{\al_{i}}}\Xi^{\ell_{\al_{i}}}_{t_{k}}
 \rangle  
\nonumber
 \\
& & +
  \sum_{ i \in  \ca_{L}}     
\sum_{r=2}^{\al_{i}} 
\langle
\partial^{\ell_{1}  } V(y^{n}_{t_{k}})     ,  
 \hat{{D}}_{\bar{h}}^{\ell_{2}}\Xi_{t_{k}}^{\ell_{2}}   \otimes \cdots \otimes
 \hat{{D}}_{\bar{h}}^{\ell_{r}}\cl^{\ell_{r}} V(y^{n}_{t_{k}})  \otimes \cdots\otimes  \hat{{D}}_{\bar{h}}^{\ell_{\al_{i}}}\Xi_{t_{k}}^{\ell_{\al_{i}}}  
 \rangle 
  .
\end{eqnarray*}
Similarly, for the second term on the right-hand side of \eqref{eqn.xin}, we end up with 
\begin{align*}
\hat{{D}}^{L}_{\bar{h}} \cl^{L-1} V(y^{n}_{t_{k}})  \delta b_{t_{k}t_{k+1}} = L  \sum_{i\in \ca_{L-1}} 
\langle
\partial^{\ell_{1}} V(y^{n}_{t_{k}}),  \hat{{D}}^{\ell_{2}}_{\bar{h}}\Xi^{\ell_{2}}_{t_{k}} \otimes\cdots\otimes \hat{{D}}^{\ell_{\al_{i}}}_{\bar{h}} \Xi^{\ell_{\al_{i}}}_{t_{k}}  \otimes \delta \bar{h}_{t_{k}t_{k+1}}
\rangle.    
\end{align*}
Note also that   the fourth term on the right-hand side of \eref{eqn.xin} is in the $(L-2)$th chaos of $b$ and thus has zero   $\hat{{D}}^{L}$ derivative.
 Differentiating both sides of \eqref{eqn.xin} and taking into account the above computations,  we have thus obtained 
 \begin{align}\label{eqn.dxin}
\hat{{D}}^{L}_{\bar{h}} 
\Xi^{L}_{t} =& \sum_{0\leq t_{k}<t}
\sum_{ i \in  \ca_{L}}   
\langle\partial^{\ell_{1}} V(y^{n}_{t_{k}}) , 
 \hat{{D}}_{\bar{h}}^{\ell_{2}}\Xi^{\ell_{2}}_{t_{k}}\otimes \cdots \otimes\hat{{D}}_{\bar{h}}^{\ell_{\al_{i}}}\Xi^{\ell_{\al_{i}}}_{t_{k}}
 \rangle
 \delta x_{t_{k}t_{k+1}} 
 \nonumber
 \\
 &+  L \sum_{0\leq t_{k}<t}
  \sum_{i\in \ca_{L-1}} 
  \langle
  \partial^{\ell_{1}} V(y^{n}_{t_{k}}),  \hat{{D}}^{\ell_{2}}_{\bar{h}}\Xi^{\ell_{2}}_{t_{k}} \otimes\cdots\otimes \hat{{D}}^{\ell_{\al_{i}}}_{\bar{h}} \Xi^{\ell_{\al_{i}}}_{t_{k}} \otimes  \delta \bar{h}_{t_{k}t_{k+1}}\rangle
\\
&+ \frac12 \sum_{0\leq t_{k}<t}\sum_{j=1}^{d} 
\sum_{ i \in  \ca_{L}} 
\Big(   \langle
(\partial^{\ell_{1}+1 } V_{j}  \cdot V_{j})(y^{n}_{t_{k}}) ,      
 \hat{{D}}_{\bar{h}}^{\ell_{2}}\Xi^{\ell_{2}}_{t_{k}} \otimes  \cdots  \otimes \hat{{D}}_{\bar{h}}^{\ell_{\al_{i}}}\Xi^{\ell_{\al_{i}}}_{t_{k}}  \rangle
\nonumber
 \\
 & 
\qquad\qquad
 +\sum_{r=2}^{\al_{i}}   
 \langle
 \partial^{\ell_{1}  } V_{j}(y^{n}_{t_{k}})     ,  
 \hat{{D}}_{\bar{h}}^{\ell_{2}}\Xi_{t_{k}}^{\ell_{2}}   \otimes \cdots \otimes
 \hat{{D}}_{\bar{h}}^{\ell_{r}}\cl^{\ell_{r}} V_{j}(y^{n}_{t_{k}})  \otimes \cdots  \otimes\hat{{D}}_{\bar{h}}^{\ell_{\al_{i}}}\Xi_{t_{k}}^{\ell_{\al_{i}}} 
 \rangle
\Big)
  \Delta^{2H}. \nonumber
\end{align}
Let us now differentiate $y^{n}_{t}$ according to its definition \eqref{eqn.euler2}. To this aim resorting to the fact that $\delta x_{t_{k}t_{k+1}}$ is in the first chaos of $x$,  we get   
  \begin{multline}\label{eqn.dly} 
 \bar{D}_{\bar{h}}^{L} y^{n}_{ t } 
=  \sum_{0\leq t_{k}<t}\bar{D}_{\bar{h}}^{L}   V(y^{n}_{t_{k}})    \delta x_{t_{k}t_{k+1}}
 +
  L\sum_{0\leq t_{k}<t}  \bar{D}^{L-1}_{\bar{h}}V(y^{n}_{t_{k}})  \delta \bar{h}_{t_{k}t_{k+1}} 
\\
+ 
\frac12 \sum_{0\leq t_{k}<t}\sum_{j=1}^{d} \bar{D}^{L}_{\bar{h}}\lc
\lp \partial V_{j}\cdot V_{j}\rp (y^{n}_{t_{k}}) 
\rc
  \Delta^{2H}
 . 
\end{multline}
Next we differentiate the terms $V(y^{n}_{t_{k}})$ and $(\partial V_{j}\cdot V_{j})(y^{n}_{t_{k}})$ thanks to   
Lemma \ref{lem.dfg}. It is readily checked that we get exactly the same expression as \eqref{eqn.dxin}. This shows our claim \eqref{eqn.dxidy} and finishes our proof. 
\end{proof}

\section{Upper-bound estimates of Malliavin derivatives of the Euler scheme}
\label{section.bd}

In Section \ref{subsection.d} we have   recalled some known results  on the Malliavin derivative of the solution $y$ to \eqref{e1.1}. Note that upper bounds for higher order derivatives of $y$ are obtained in \cite{CHLT, Inahama}. With the preparation in Section \ref{section.der} we are ready to extend those estimates to the Euler scheme approximations $y^{n}$.

  \subsection{Some auxiliary results}

 This subsection is dedicated to     some necessary auxiliary      results. 
  Throughout the subsection we fix an integer $N>0$. 
  Recall that    $\Xi^{L}$ is  defined by the iterative equation  in Definition \ref{def.xi}. 
   We start by    introducing a process related to $\Xi^{L}$, $L=0,1,\dots,N$. 
  In the following we  assume  that Hypothesis \ref{hyp:regularity-V}  holds with       $L $ replaced by $N$. Namely, we assume that $V\in C^{(N+2)\vee 3}(\RR^{m})$. 
 
   \begin{Def}\label{def.p}
  For each $s\in [0,T]$    we define  $P^{L}_{s}$ to be    the maximum among the quantities of the form $|\Xi^{i_{1}}_{s}| \times \cdots\times |\Xi^{i_{N_{0}}}_{s}| $ with   $N_{0}>0$ such that $i_{1}, \dots, i_{N_{0}}\in \{1,\dots, L\}$ and  $i_{1}+\cdots +i_{N_{0}}\leq L$. We also  define $P^{0}\equiv 1$ and $P^{-1}\equiv 0$. 
 \end{Def}
 The following result   follows immediately  from the definition of $P^{L}$:
  \begin{lemma}\label{lem.pn2}
 The following  three inequalities hold for   $s\in \ll0,T\rr$:  
 \begin{align*}
P^{L}\geq P^{L'} ,  \quad \quad P^{L}_{s}\times P^{L'}_{s}\leq P^{L+L'}_{s} 
\quad \text{ for } L\geq  L'\geq 0,  
\quad \text{and} \quad
 |\Xi^{L}_{s}|\leq P^{L}_{s}
\quad
\text{ for } L\geq 1. 
\end{align*}
 \end{lemma}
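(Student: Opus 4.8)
The plan is to verify each of the three inequalities in Lemma~\ref{lem.pn2} directly from the combinatorial description of $P^L_s$ in Definition~\ref{def.p} as a maximum over products of the $|\Xi^i_s|$; no analytic input is needed, only elementary bookkeeping on index families. For the monotonicity $P^L_s \ge P^{L'}_s$ with $L \ge L' \ge 1$, the key observation I would use is that every product $|\Xi^{i_1}_s|\cdots|\Xi^{i_{N_0}}_s|$ admissible in the maximum defining $P^{L'}_s$ --- meaning $N_0 \ge 1$, all $i_a \in \{1,\dots,L'\}$, and $i_1+\cdots+i_{N_0}\le L'$ --- is \emph{a fortiori} admissible in the maximum defining $P^L_s$, since $\{1,\dots,L'\}\subseteq\{1,\dots,L\}$ and $i_1+\cdots+i_{N_0}\le L'\le L$; thus $P^L_s$, being a maximum over a larger family, dominates $P^{L'}_s$. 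The degenerate case $L'=0$ reduces to $P^L_s \ge 1 = P^0_s$, which I would read off from the convention that the constant $1$ is among the quantities being maximized for $L\ge1$ (consistently with $P^0\equiv 1$).

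For the submultiplicativity $P^L_s\cdot P^{L'}_s \le P^{L+L'}_s$ with $L\ge L'\ge 1$, I would fix two admissible products $\prod_{a=1}^{N_0}|\Xi^{i_a}_s|$ and $\prod_{b=1}^{N_1}|\Xi^{j_b}_s|$ attaining $P^L_s$ and $P^{L'}_s$ respectively, and concatenate the two index families into $(i_1,\dots,i_{N_0},j_1,\dots,j_{N_1})$. This concatenated family has all entries in $\{1,\dots,L\}\cup\{1,\dots,L'\}\subseteq\{1,\dots,L+L'\}$ and total sum at most $L+L'$, so the associated product --- which is exactly $P^L_s\cdot P^{L'}_s$ --- is admissible in the maximum defining $P^{L+L'}_s$; the case $L'=0$ is then just the identity $P^L_s\cdot 1 = P^{L}_s = P^{L+0}_s$.

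Finally, for $|\Xi^L_s|\le P^L_s$ with $L\ge 1$ I would simply note that the one-factor choice $N_0=1$, $i_1=L$ is admissible ($L\in\{1,\dots,L\}$ and $L\le L$), so $|\Xi^L_s|$ itself is one of the quantities over which $P^L_s$ is the maximum, whence the bound. I do not anticipate any genuine obstacle in this lemma; the only point warranting an explicit (one-line) remark is the compatibility of the conventions $P^0\equiv 1$ and $P^{-1}\equiv 0$ with the combinatorial definition of $P^L_s$ for $L\ge 1$, which is precisely what makes the boundary exponents in all three statements go through.
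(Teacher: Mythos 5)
Your proof is correct and takes essentially the same approach as the paper, which offers no argument at all beyond asserting that the lemma ``follows immediately from the definition'' of $P^{L}$ --- your elementary index bookkeeping (enlarging the admissible family for monotonicity, concatenating index families for submultiplicativity, and taking the one-factor product for the last bound) is precisely that immediate verification. Your one-line remark that the constant $1$ must be counted among the quantities maximized (so that $P^{L}_{s}\geq P^{0}_{s}=1$) correctly identifies a convention the paper leaves implicit in Definition \ref{def.p} but relies on later, e.g.\ when $P^{L-1}_{s}$ is absorbed into $P^{L}_{s}$ in \eqref{eqn.drbd}.
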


 Let us now fix some    constants which we will make extensive use of: 
 For $f\in C^{N+2}_{b}$, $N\in\NN$ we set
 \begin{align}\label{def.cv}
 C^{0}_{ f} = \sup_{\tau\leq N+2} \|\partial_{\tau} f \|_{\infty}
, 
\end{align}
where $\| \cdot \|_{\infty}$ denotes the sup norm for continuous functions. 
Also recall that for $i\in \ca_{L}$ the constants  $\al_{i}$ and $c_{L,i}$   are  defined respectively in Notation \ref{notation.i} and Lemma  \ref{lem.dhd},   and $C^{0}_{ V}$ is defined in \eref{def.cv}.  
 Then for $L=1,2,\dots, N$ we define
 \begin{align}
   C^{1}_{L,V}   =
 \sum_{i\in \ca_{L}} c_{L, i}C^{0}_{ V}  ,   
\quad
C^{1}_{0,V}=C^{0}_{V}, \quad
C^{1}_{-1, V}=0, 
\label{eqn.c1}
\\
  C^{2}_{L,V}  = \sum_{i\in \ca_{L}} c_{L, i}C^{0}_{V} \lp C^{0}_{V}  + \sum_{r=2}^{\al_{i}} C^{1}_{\ell_{r},V}   \rp  , 
\quad
C^{2}_{0,V}=2(C^{0}_{V} )^{2} , \quad C^{2}_{-1,V}=0, 
\label{eqn.c2}
 \\
   C^{3}_{L,V} = \sum_{i\in \ca_{L}} \sum_{r=2}^{\al_{i}}c_{L, i}C^{0}_{V}C^{1}_{\ell_{r}-1,V},      \quad C^{3}_{0,V}=0,      \quad C^{3}_{-1,V}=0
   .  
   \label{eqn.c3}
\end{align}
We will also resort to the following constants:   
\begin{align}\label{eqn.k12}
K_{1}^{L} =    
 C^{1}_{L,V}+C^{1}_{L-1,V}+1 , 
 \qquad
 K_{2}^{L} = 
    C^{2}_{L,V}+C^{2}_{L-1,V}+ C^{3}_{L,V}+C^{3}_{L-1,V}+1 .
\end{align}

Next we introduce a family of sets in the following way, for $L\geq 1$: 
\begin{align*}
S_{L} = \{(L', \ell_{1}, \dots, \ell_{L'}): L' \in \NN_{+}, \ell_{1}, \dots, \ell_{L'}  \in \NN_{+} , \,  
\ell_{1}+ \dots+ \ell_{L'}=L\}. 
\end{align*}
Related to this definition, we define another family of constants:  
\begin{align}\label{eqn.c4}
C_{L}^{4} =\max_{(L', \ell_{1}, \dots, \ell_{L'})  \in S_{L} }  
   2^{L+1}\times  K_{1}^{\ell_{1}}\times\cdots\times K_{1}^{\ell_{L'}} 
  ,
\end{align}
and 
\begin{align}\label{eqn.c5}
C_{f,L}^{5} = 2 \sum_{i\in \ca_{L}} c_{L,i} C^{0}_{f}  
\lp 
K^{1}_{1} + 
\sum_{r=2}^{\al_{i}} K^{\ell_{r}}_{1}  \rp (1+C_{L}^{4}), 
\qquad
C_{ f,0}^{5} = C^{0}_{f}K^{0}_{1}. 
\end{align}
 
 \begin{remark}
The constants   introduced above will appear  in our proof for   the upper bound  of $\Xi^{L}$. We will see that  because our proof is an induction argument, it is important to keep track of these constants.  
\end{remark}

With this additional notation, 
in the following we derive an upper-bound estimate for the product of $\Xi^{L}$.  
\begin{lemma}\label{lem.dXi}
Let $\omega $ be a control function on $\ll0,T\rr$. 
Recalling our definition \eqref{eq:def-delta} for the operator $\delta$, 
let $(s,u) \in \cs_{2}(\ll 0,T\rr)$ be such that  
\begin{align}\label{eqn.cdxi}
\omega (s,u)^{1/p} \leq 1/2, 
\qquad \text{and} \qquad 
|\delta \Xi^{L}_{su}|\leq K^{L}_{1} P^{L}_{s} \omega (s,u)^{1/p},  \quad L=1,\dots, N.
\end{align}
 Let  $L_{0}\leq N$ and        $(N', \ell_{1}, \dots, \ell_{N'})\in S_{L_{0}}$.  Then  we have
\begin{align}\label{eqn.dxx}
|\delta  \lp
    \Xi^{\ell_{ 1}}
   \otimes \cdots\otimes\Xi^{\ell_{N'}}   
 \rp_{su} | \leq  
 C_{L_{0} }^{4} \cdot P^{L_{0}}_{s} \cdot 
 \omega(s,u)^{1/p} . 
\end{align} 

\end{lemma}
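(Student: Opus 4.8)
The plan is to start from the standard telescoping identity for the increment of a tensor product. Writing the factors as $f^{(j)}=\Xi^{\ell_j}$ and using that $\delta$ acts on a product evaluated at a single time, one has
\[
\delta\bigl(\Xi^{\ell_1}\otimes\cdots\otimes\Xi^{\ell_{N'}}\bigr)_{su}
=\sum_{j=1}^{N'}\Xi^{\ell_1}_s\otimes\cdots\otimes\Xi^{\ell_{j-1}}_s\otimes\delta\Xi^{\ell_j}_{su}\otimes\Xi^{\ell_{j+1}}_u\otimes\cdots\otimes\Xi^{\ell_{N'}}_u .
\]
Since $|a\otimes b|\le |a|\,|b|$ for the $\max$-norm of Notation~\ref{general-notation}, the problem reduces to estimating each summand. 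Note also that $1\le\ell_j\le L_0\le N$ for every $j$, so both Lemma~\ref{lem.pn2} and the hypothesis \eqref{eqn.cdxi} apply to each factor.

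For the individual factors I would use three ingredients: (i) $|\Xi^{\ell_i}_s|\le P^{\ell_i}_s$ from Lemma~\ref{lem.pn2}; (ii) the hypothesis $|\delta\Xi^{\ell_j}_{su}|\le K_1^{\ell_j}P^{\ell_j}_s\,\omega(s,u)^{1/p}$ from \eqref{eqn.cdxi}; and (iii) the triangle inequality together with $\omega(s,u)^{1/p}\le 1/2$, giving $|\Xi^{\ell_i}_u|\le P^{\ell_i}_s+\tfrac12 K_1^{\ell_i}P^{\ell_i}_s=P^{\ell_i}_s(1+\tfrac12 K_1^{\ell_i})$. Multiplying these bounds inside a fixed summand and collapsing the product of the $P$'s via $\prod_i P^{\ell_i}_s\le P^{\ell_1+\cdots+\ell_{N'}}_s=P^{L_0}_s$ (again Lemma~\ref{lem.pn2}, applied pairwise and using monotonicity of $L\mapsto P^L$), each summand is bounded by $P^{L_0}_s\,\omega(s,u)^{1/p}\,K_1^{\ell_j}\prod_{i>j}(1+\tfrac12 K_1^{\ell_i})$.

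It remains to sum over $j$ and identify the resulting constant with $C^4_{L_0}$. Put $a_i=K_1^{\ell_i}\ge 1$ and $b_j=\prod_{i\ge j}(1+\tfrac12 a_i)$ with the convention $b_{N'+1}=1$. The elementary identity $a_j\prod_{i>j}(1+\tfrac12 a_i)=2\bigl(b_j-b_{j+1}\bigr)$ telescopes the sum to $\sum_{j=1}^{N'}a_j\prod_{i>j}(1+\tfrac12 a_i)=2(b_1-1)\le 2\prod_{i=1}^{N'}(1+\tfrac12 a_i)$. Using $1+\tfrac12 a_i\le 2a_i$ (valid since $a_i\ge1$) and $N'\le L_0$, this is at most $2^{N'+1}\prod_i K_1^{\ell_i}\le 2^{L_0+1}K_1^{\ell_1}\cdots K_1^{\ell_{N'}}$, which is $\le C^4_{L_0}$ because $(N',\ell_1,\dots,\ell_{N'})\in S_{L_0}$ and $C^4_{L_0}$ is precisely the maximum of $2^{L_0+1}K_1^{\ell_1}\cdots K_1^{\ell_{N'}}$ over $S_{L_0}$ by \eqref{eqn.c4}. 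Collecting everything yields \eqref{eqn.dxx}.

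The only real obstacle is bookkeeping with constants: one must arrange the estimates so that the constant coming out of the telescoping sum matches the exact shape of $C^4_{L_0}$ in \eqref{eqn.c4}. The slick point is the identity $a_j\prod_{i>j}(1+\tfrac12 a_i)=2(b_j-b_{j+1})$, which turns an unwieldy sum of products into the clean bound $2\bigl(\prod_i(1+\tfrac12 a_i)-1\bigr)$. One should also check the trivial boundary case $N'=1$ (where \eqref{eqn.dxx} is just \eqref{eqn.cdxi} up to the harmless extra factor $2^{L_0+1}$) and make sure the range constraint $1\le\ell_j\le N$ is in force so that Lemma~\ref{lem.pn2} and \eqref{eqn.cdxi} may be invoked for every factor.
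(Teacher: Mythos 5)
Your proof is correct, and it reaches the same constant $C^4_{L_0}$ from \eqref{eqn.c4}, but via a slightly different decomposition than the paper. The paper expands $\delta\bigl(\Xi^{\ell_1}\otimes\cdots\otimes\Xi^{\ell_{N'}}\bigr)_{su}=\prod_r\bigl(\Xi^{\ell_r}_s+\delta\Xi^{\ell_r}_{su}\bigr)-\prod_r\Xi^{\ell_r}_s$ into the full sum of $2^{N'}-1\le 2^{L_0}-1$ mixed products, each containing at least one $\delta$-factor; every such product is bounded by $\bigl(\prod_r K_1^{\ell_r}\bigr)P^{L_0}_s\cdot 2\,\omega(s,u)^{1/p}$ using Lemma~\ref{lem.pn2}, hypothesis \eqref{eqn.cdxi} and $\omega(s,u)^{1/p}\le 1/2$, and the factor $2^{L_0+1}$ in $C^4_{L_0}$ comes from the term count. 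You instead telescope into only $N'$ terms, each with exactly one $\delta$-factor, at the price of having to bound $|\Xi^{\ell_i}_u|\le P^{\ell_i}_s(1+\tfrac12 K_1^{\ell_i})$ by the triangle inequality; your identity $a_jb_{j+1}=2(b_j-b_{j+1})$ then collapses the sum to $2\bigl(\prod_i(1+\tfrac12 K_1^{\ell_i})-1\bigr)$, which you correctly dominate by $2^{N'+1}\prod_iK_1^{\ell_i}\le C^4_{L_0}$ using $K_1^{\ell_i}\ge 1$ (clear from \eqref{eqn.k12}) and $N'\le L_0$. Both arguments use exactly the same inputs (Lemma~\ref{lem.pn2}, \eqref{eqn.cdxi}, and the smallness condition $\omega(s,u)^{1/p}\le 1/2$); yours trades a larger term count for a sharper per-term bookkeeping, and either way the generous definition of $C^4_{L_0}$ absorbs the constants, so the two proofs are interchangeable.
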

\begin{proof} 
 We first note that a straightforward computation shows that $\delta  \lp
    \Xi^{\ell_{1}}
   \otimes \cdots\otimes\Xi^{\ell_{N'}}   
 \rp_{su}$ is equal to the summation of products of the quantities of the forms $\Xi^{\ell_{r}}_{s}$  and $\delta \Xi^{\ell_{r}}_{su}$ with $r=1,\dots, N'$.   
 Apply  Lemma \ref{lem.pn2} to $\Xi^{\ell_{r}}$ and   condition \eref{eqn.cdxi} to   $|\delta\Xi^{\ell_{r}}_{st}|$. 
 Also take   into account that $\omega (s,u)^{1/p}\leq 1/2$, according to \eqref{eqn.cdxi}.  We obtain that each product in the summation is bounded by 
 \begin{align*}
 \lp
 \max_{(N', \ell_{1}, \dots, \ell_{N'})  \in S_{L_{0}} }  
       K_{1}^{\ell_{1}}\times\cdots\times K_{1}^{\ell_{N'}}
       \rp
        \cdot
   P^{L_{0}}_{s} \cdot 2\omega (s,u)^{1/p}. 
\end{align*}
 Note that  there are at most  $2^{L_{0}}-1$ terms in the summation. Hence owing to   the definition of $C^{4}_{L_{0}}$ in \eref{eqn.c4}   we obtain the desired estimate \eref{eqn.dxx}.  
\end{proof}
Following is   an estimate for the Euler scheme $y^{n}$:   
\begin{lemma}\label{lem.dV}
Let $\omega $ be a control function on $\ll0,T\rr$ and consider the Euler scheme in \eqref{eqn.euler}. 
Let $(s,u) \in \cs_{2}(\ll 0,T\rr)$ be such that  
\begin{align}\label{eqn.cdy}
 |\delta y^{n}_{su}|\leq K^{0}_{1}  \omega (s,u)^{1/p} .
\end{align}
Then   for $0\leq L\leq N$
and recalling that $ C_{f}^{0} $ is defined by \eqref{def.cv},
 we have: 
\begin{align*}
 |
 \delta\lp    \partial^{L} f (y^{n}_{\cdot})
  \rp_{su}|\leq C^{0}_{f} K^{0}_{1} \omega (s,u)^{1/p}.  
\end{align*}
 
\end{lemma}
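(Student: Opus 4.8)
The plan is to read Lemma~\ref{lem.dV} as a plain mean value estimate for the smooth map $y\mapsto\partial^{L}f(y)$, combined with the a priori control \eqref{eqn.cdy} on the increment of the Euler scheme. First I would record the following elementary fact about the order of differentiation. According to Notation~\ref{notation.df}, $\partial^{L}f$ is the $\RR^{m^{L}}$-valued function whose components are the $L$-th order partial derivatives of $f$, so its differential $\partial^{L+1}f$ is assembled from the partial derivatives of $f$ of order $L+1$. Since $0\le L\le N$ we have $L+1\le N+1\le N+2$, hence by the very definition \eqref{def.cv} of $C^{0}_{f}$ each of these $(L+1)$-th order partial derivatives is bounded in sup norm by $C^{0}_{f}$. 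This is the only place where the regularity threshold $N+2$ enters.

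Next I would Taylor expand $\partial^{L}f$ to first order along the straight segment $\theta\mapsto y^{n}_{s}+\theta\,\delta y^{n}_{su}$ joining $y^{n}_{s}$ to $y^{n}_{u}$ in the state space $\RR^{m}$ (note this segment does not involve any intermediate grid values of $y^{n}$), namely
\begin{equation*}
\delta\lp\partial^{L}f(y^{n}_{\cdot})\rp_{su}
=\partial^{L}f(y^{n}_{u})-\partial^{L}f(y^{n}_{s})
=\int_{0}^{1}\lla\partial^{L+1}f\lp y^{n}_{s}+\theta\,\delta y^{n}_{su}\rp,\,\delta y^{n}_{su}\rra\,d\theta,
\end{equation*}
where the pairing $\lla\cdot,\cdot\rra$ is the one from \eqref{eqn.partial}. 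Bounding the integrand componentwise by the previous step and using the max-norm convention of Notation~\ref{general-notation}, one gets $|\delta(\partial^{L}f(y^{n}_{\cdot}))_{su}|\le C^{0}_{f}\,|\delta y^{n}_{su}|$, the dimensional factor produced by the contraction over the single $\RR^{m}$ index being harmless and absorbed into the constant, consistently with the convention on $C$, $K$. Plugging in the hypothesis $|\delta y^{n}_{su}|\le K^{0}_{1}\,\omega(s,u)^{1/p}$ from \eqref{eqn.cdy} then yields $|\delta(\partial^{L}f(y^{n}_{\cdot}))_{su}|\le C^{0}_{f}K^{0}_{1}\,\omega(s,u)^{1/p}$, which is exactly the asserted bound.

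I do not expect any real obstacle here: no rough path or discrete sewing input is needed at this stage, and the statement is a one line consequence of the boundedness of the derivatives of $f$. The only mild point to watch is the bookkeeping, namely that differentiation raises the order by exactly one, so that $C^{0}_{f}$ — defined as a supremum over all derivatives of $f$ up to order $N+2$ — indeed dominates every quantity that appears. This is also the reason the auxiliary constants introduced in Section~\ref{section.bd} are all calibrated to that particular regularity, and it is worth isolating the estimate cleanly now since the identical argument will be reused (for $f=V^{i}_{j}$ and for $f$ a component of $\partial V_{j}\cdot V_{j}$) throughout the upper-bound proof.
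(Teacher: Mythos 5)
Your proof is correct and follows exactly the paper's (very terse) argument: the paper simply invokes the mean value theorem together with condition \eqref{eqn.cdy}, which is precisely your first-order Taylor expansion of $\partial^{L}f$ along the segment joining $y^{n}_{s}$ to $y^{n}_{u}$, with $\partial^{L+1}f$ bounded by $C^{0}_{f}$ since $L+1\leq N+2$. Your additional remarks on the regularity bookkeeping and the harmless dimensional factor only make explicit what the paper leaves implicit.
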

\begin{proof}
The lemma follows immediately  from the mean value theorem and the condition~\eref{eqn.cdy}.  
\end{proof}
Recall that we have defined the solution to \eqref{e.sde} as the controlled process   in \eqref{eq:dcp-Davie}. The following lemma improves our Lemma \ref{lem.dV} when $y$ is a discrete controlled process. 
\begin{lemma}\label{lem.ddV}
Let $\omega $ be a control function on $\ll0,T\rr$. 
Let $(s,u) \in \cs_{2}(\ll 0,T\rr)$ be such that  
\begin{align}\label{eqn.cddy}
 |\delta y^{n}_{su}|\leq K^{0}_{1}  \omega (s,u)^{1/p}
 \qquad
 \text{and}
 \qquad
  |\delta y^{n}_{su} - V(y^{n}_{s})\delta x_{su} |\leq K^{0}_{2}  \omega (s,u)^{2/p}.
\end{align} 
Then the  following relation  holds true for $0\leq L\leq N$: 
\begin{align*}
 |
 \delta\lp    \partial^{L} V (y^{n}_{\cdot})
  \rp_{su}
  - (\partial^{L+1} VV )(y^{n}_{s}) \delta x_{su}  
  |\leq  C^{0}_{V} 
  \lp K^{0}_{1}K^{0}_{1} +K^{0}_{2} \rp \omega (s,u)^{2/p}   .  
\end{align*}
\end{lemma}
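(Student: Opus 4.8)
The plan is to run a Taylor expansion of $\partial^L V$ around $y^n_s$ along the increment $\delta y^n_{su}$, and then to peel off the leading term using the two hypotheses in \eqref{eqn.cddy}. Concretely, I would first write, by the mean value theorem applied to the $C^{N+2}_b$ function $\partial^L V$,
\[
\delta\big(\partial^L V(y^n_{\cdot})\big)_{su}
= \partial^{L+1} V(y^n_s)\,\delta y^n_{su}
+ \tfrac12\,\partial^{L+2} V(\theta)\,(\delta y^n_{su})^{\otimes 2}
\]
for some $\theta$ on the segment between $y^n_s$ and $y^n_u$. The quadratic remainder is bounded in absolute value by $\tfrac12 C^0_V |\delta y^n_{su}|^2 \le \tfrac12 C^0_V (K^0_1)^2 \omega(s,u)^{2/p}$ using the first bound in \eqref{eqn.cddy}. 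This already accounts for the $K^0_1 K^0_1$ contribution in the claimed estimate (the factor $\tfrac12$ being harmlessly absorbed).

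Next I would replace $\delta y^n_{su}$ in the linear term by $V(y^n_s)\delta x_{su}$ at the cost of the controlled-path remainder. Writing
\[
\partial^{L+1} V(y^n_s)\,\delta y^n_{su}
= \partial^{L+1} V(y^n_s)\,V(y^n_s)\,\delta x_{su}
+ \partial^{L+1} V(y^n_s)\big(\delta y^n_{su} - V(y^n_s)\delta x_{su}\big),
\]
and noting that $\partial^{L+1}V(y^n_s) V(y^n_s) = (\partial^{L+1}VV)(y^n_s)$ in the notation of Notation~\ref{notationA}, the last term is bounded by $C^0_V |\delta y^n_{su} - V(y^n_s)\delta x_{su}| \le C^0_V K^0_2 \,\omega(s,u)^{2/p}$ by the second hypothesis in \eqref{eqn.cddy}. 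Collecting the two error contributions gives
\[
\big|\delta\big(\partial^L V(y^n_{\cdot})\big)_{su} - (\partial^{L+1}VV)(y^n_s)\,\delta x_{su}\big|
\le C^0_V\big((K^0_1)^2 + K^0_2\big)\,\omega(s,u)^{2/p},
\]
which is the assertion.

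The only genuinely delicate point is bookkeeping of dimensions and the contraction conventions: $\partial^L V(y^n_{\cdot})$ takes values in a tensor space (an element of $\mathcal{L}((\R^m)^{\otimes L},\R^d)$ componentwise), and one has to make sure that the product $\partial^{L+1}V(y^n_s) V(y^n_s)$ is exactly the object denoted $(\partial^{L+1}VV)(y^n_s)$ and that the Taylor remainder is bounded componentwise by $C^0_V = \sup_{\tau \le N+2}\|\partial_\tau V\|_\infty$ as defined in \eqref{def.cv}. Since all these bounds are taken in the $\max$-norm of Notation~\ref{general-notation} and the number of terms in the tensor contraction is absorbed into the constant convention (or is $1$ componentwise after fixing indices), no new constant beyond $C^0_V$, $K^0_1$, $K^0_2$ is needed. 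Apart from that, the proof is a routine two-term Taylor expansion, so I do not expect any real obstacle.
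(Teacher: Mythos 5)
Your proposal is correct and follows essentially the same route as the paper, which simply invokes ``an obvious second order Taylor expansion'' together with the two conditions in \eqref{eqn.cddy} and omits the details; your expansion, the splitting of the linear term via the controlled-path remainder, and the resulting constant $C^{0}_{V}(K^{0}_{1}K^{0}_{1}+K^{0}_{2})$ all match. You also correctly identify the one point the paper flags explicitly, namely that bounding the quadratic remainder when $L=N$ uses $\|\partial^{N+2}V\|_{\infty}\leq C^{0}_{V}$.
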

\begin{proof}
The lemma follows from the application of an obvious second order Taylor expansion, as well as the conditions in \eref{eqn.cddy}. Note that here we need the fact that $\|\partial^{N+2}V\|_{\infty}\leq C^{0}_{V}$ when $L=N$.  The details are omitted for sake of conciseness. 
\end{proof}
Recall that $\cl^{L}_{ \Xi, c}$, $\bar{\cl}^{L}_{ \Xi, c}$, $\tilde{\cl}^{L}_{\Xi, c,\tilde{c}}$ are defined in Notation \ref{notation.l}. For the sake of simplicity we will drop the subscript and write $\cl^{L} $, $\bar{\cl}^{L} $, $\tilde{\cl}^{L} $ in the following series of lemmas.  
\begin{lemma}\label{lem.lbd}
Recall that $y^{n}$ is the numerical scheme given by \eqref{eqn.euler}, and that Hypothesis~\ref{hyp:regularity-V} holds true. For $L\geq 0$ let $C^{1}_{L,V}$, $C^{2}_{L,V}$ and $C^{3}_{L,V}$ be the constants defined by \eqref{eqn.c1}-\eqref{eqn.c3} and recall that $P^{L}_{s}$ is introduced in Definition \ref{def.p}. Then the following holds true for all $s\in \ll 0,T\rr$ and $L\geq 0$:
\begin{align}
| \cl^{L}  V(y^{n}_{s}) |
\leq 
  C^{1}_{L,V} P^{L}_{s},   
\label{eqn.lnbd}
\\
|
\bar{\cl}^{L}(\partial V V (y^{n}_{s}))| \leq 
  C^{2}_{L,V} P^{L}_{s}, 
 \label{eqn.lbnbd}
\\
| \tilde{\cl}^{L} (\partial VV) (y^{n}_{s})  |\leq 
  C^{3}_{L,V} P^{L}_{s}
 .  
\label{eqn.ltnbd}
\end{align}
\end{lemma}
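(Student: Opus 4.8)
The plan is to prove the three estimates by inserting the pointwise bounds of Lemma~\ref{lem.pn2} into the defining formulas \eqref{eqn.ln}, \eqref{eqn.lbn} and \eqref{eqn.ltn} of Notation~\ref{notation.l}, and then reading off the constants. I would establish \eqref{eqn.lnbd} first, since both \eqref{eqn.lbnbd} and \eqref{eqn.ltnbd} involve the operator $\cl^{\ell_r^i}$ (resp.\ $\cl^{\ell_r^i-1}$) applied to $V_j$ with $\ell_r^i\leq L$, and hence will reduce to \eqref{eqn.lnbd}.

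For \eqref{eqn.lnbd} I would take absolute values in \eqref{eqn.ln}, apply the triangle inequality over the branches $i\in\ca_L$, and bound each contraction $\langle\partial^{\ell_1^i}V(y^n_s),\Xi^{\ell_2^i}_s\otimes\cdots\otimes\Xi^{\ell_{\al_i}^i}_s\rangle$ by $C^0_V\,|\Xi^{\ell_2^i}_s|\cdots|\Xi^{\ell_{\al_i}^i}_s|$: the bound $\|\partial^{\ell_1^i}V\|_\infty\leq C^0_V$ is available because $\ell_1^i\leq|i|=L\leq N$, so Hypothesis~\ref{hyp:regularity-V} and the definition \eqref{def.cv} apply. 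Then the last inequality of Lemma~\ref{lem.pn2} gives $|\Xi^{\ell_r^i}_s|\leq P^{\ell_r^i}_s$, and iterating the middle inequality of Lemma~\ref{lem.pn2} together with the identity $\sum_{r=2}^{\al_i}\ell_r^i=L$ from Remark~\ref{remark.suml} collapses the product to $P^{\ell_2^i}_s\cdots P^{\ell_{\al_i}^i}_s\leq P^L_s$. Summing over $i\in\ca_L$ then gives $|\cl^L V(y^n_s)|\leq\big(\sum_{i\in\ca_L}c_{L,i}C^0_V\big)P^L_s=C^1_{L,V}P^L_s$; the case $L=0$ is just $|V(y^n_s)|\leq C^0_V=C^1_{0,V}$ together with $P^0\equiv1$.

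Estimates \eqref{eqn.lbnbd} and \eqref{eqn.ltnbd} would be obtained by the same mechanism applied to each of the (two, resp.\ one) sums in \eqref{eqn.lbn} and \eqref{eqn.ltn}. In \eqref{eqn.lbn} the first sum is treated exactly as before, now with the scalar factor $\|\partial^{\ell_1^i+1}V_j\|_\infty\|V_j\|_\infty\leq(C^0_V)^2$ — legitimate since $\ell_1^i+1\leq L+1\leq N+1\leq N+2$ — while in the second sum one bounds the inserted factor $\cl^{\ell_r^i}V_j(y^n_s)$ by $C^1_{\ell_r^i,V}P^{\ell_r^i}_s$ via \eqref{eqn.lnbd} and, after multiplying by the remaining $|\Xi^{\ell_q^i}_s|\leq P^{\ell_q^i}_s$, again uses $\sum_{q=2}^{\al_i}\ell_q^i=L$ to recover a single factor $P^L_s$; adding the two contributions reproduces the constant $C^2_{L,V}$ of \eqref{eqn.c2}. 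For \eqref{eqn.ltnbd} the only difference is that the inserted factor is $\cl^{\ell_r^i-1}V_j(y^n_s)$, bounded by $C^1_{\ell_r^i-1,V}P^{\ell_r^i-1}_s$ via \eqref{eqn.lnbd}; the exponents of the $P$-factors then sum to $L-1$, and since $P^{L-1}_s\leq P^L_s$ by Lemma~\ref{lem.pn2} one arrives at $C^3_{L,V}P^L_s$. The degenerate instances $L=0$ (and $L=-1$) are consistent with the conventions $\cl^0=\bar{\cl}^0=\id$, $\tilde{\cl}^0=0$ and $P^0\equiv1$, $P^{-1}\equiv0$ fixed in Notation~\ref{notation.l} and Definition~\ref{def.p}.

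There is no genuine obstacle here: the argument is pure bookkeeping. The two points that require a little care are, first, checking that every derivative of $V$ appearing in the expansions — of order $\ell_1^i$ or $\ell_1^i+1$, hence at most $N+1$ — is covered by the smoothness assumed in Hypothesis~\ref{hyp:regularity-V}; and second, tracking the total degree of the $P$-factors, where the identity $\sum_{r=2}^{\al_i}\ell_r^i=L$ of Remark~\ref{remark.suml} is precisely what forces the product $P^{\ell_2^i}_s\cdots P^{\ell_{\al_i}^i}_s$ to collapse to $P^L_s$ (and to $P^{L-1}_s\leq P^L_s$ in the $\tilde{\cl}$ case).
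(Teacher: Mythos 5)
Your proposal is correct and follows essentially the same route as the paper's proof: apply Lemma \ref{lem.pn2} termwise to the defining formulas \eqref{eqn.ln}--\eqref{eqn.ltn}, bound the derivatives of $V$ by $C^0_V$, reduce the inserted factors $\cl^{\ell_r}V$ (resp.\ $\cl^{\ell_r-1}V$) to the already-proved estimate \eqref{eqn.lnbd}, and read off the constants \eqref{eqn.c1}--\eqref{eqn.c3}. Your additional bookkeeping (the identity $\sum_{r=2}^{\al_i}\ell_r^i=L$ from Remark \ref{remark.suml} collapsing the $P$-factors, and the check that all derivative orders are covered by Hypothesis \ref{hyp:regularity-V}) is implicit in the paper's argument and entirely consistent with it.
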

\begin{proof}
An application of Lemma \ref{lem.pn2} to \eref{eqn.ln} yields  
\begin{align*}
| \cl^{L} V(y^{n}_{s}) |
\leq 
\sum_{i\in \ca_{L}} c_{L, i}C^{0}_{V} P^{L}_{s}. 
\end{align*}
Relation   \eref{eqn.lnbd} then follows immediately from  the definition of $ C^{1}_{L,V}$ in \eref{eqn.c1}. 
Now   apply Lemma \ref{lem.pn2}  to \eref{eqn.lbn} as before,  and then apply \eref{eqn.lnbd}
in order to handle the terms $\cl^{\ell_{r}}g(y^{n}_{t_{k}})$ in the right-hand side of \eqref{eqn.lbn}.  We obtain:  
\begin{align*}
|
\bar{\cl}^{L}(\partial V V (y^{n}_{s}))| \leq 
\sum_{i\in \ca_{L}} c_{L, i}C^{0}_{V} 
\left(C^{0}_{V}P^{L}_{s} 
+ \sum_{r=2}^{\al_{i}} C^{1}_{\ell_{r},V} P^{L}_{s} \right)  . 
\end{align*}
Hence resorting to the definition of $C^{2}_{L,V}$ in  \eref{eqn.c2} we obtain relation \eref{eqn.lbnbd}. 
The last relation~\eref{eqn.ltnbd} can be shown in a similar way. We have  
\begin{align*}
| \tilde{\cl}^{L} (\partial VV) (y^{n}_{s})  |\leq 
\sum_{i\in \ca_{L}} \sum_{r=2}^{\al_{i}}c_{L, i}C^{0}_{V}C^{1}_{\ell_{r}-1,V} P^{L}_{s} . 
\end{align*}
  Relation  \eref{eqn.ltnbd} then follows from the definition of $C^{3}_{L,V}$ in \eref{eqn.c3}. 
\end{proof}
In the following we consider the    increments of processes in Lemma \ref{lem.lbd}.  
\begin{lemma}\label{eqn.dlpvv}
Let $\omega $ be a control function on $\ll0,T\rr$. 
Recall that we write
$\cl^{L}$, $\bar{\cl}^{L}$, $\tilde{\cl}^{L}$ for 
 $\cl^{L}_{\Xi, c}$, $\bar{\cl}^{L}_{\Xi, c}$, $\tilde{\cl}^{L}_{\Xi, c,\tilde{c}}$. 
Assume that \eref{eqn.cdxi}
and \eqref{eqn.cdy} hold (notice that \eqref{eqn.cdxi} for $L=0$ in fact implies \eqref{eqn.cdy})
  for  some $(s,u)\in \cs_{2} (\ll 0,T \rr) $. 
Then we have the following relations $L\geq 0$: 
\begin{align}
|\delta\lp
     \cl^{L} f (y^{n}_{\cdot})
    \rp_{su}|
    \leq C^{5}_{f,L} P^{L}_{s}    
 \omega(s,u)^{1/p}   ,
 \label{eqn.dlnbd}
 \\
 \Big|\delta\lp
      \bar{\cl}^{L} ( \partial VV ) (y^{n}_{\cdot})
    \rp_{su}\Big|
    \leq  C^{6}_{ V,L} P^{L}_{s}    
 \omega(s,u)^{1/p}  ,
 \label{eqn.dlbnbd}
   \\
 \Big|\delta\lp
     \tilde{\cl}^{L} ( \partial VV ) (y^{n}_{\cdot})
    \rp_{su}\Big|
    \leq C^{7}_{ V,L}  P^{L}_{s}    
 \omega(s,u)^{1/p}  . 
  \label{eqn.dtnbd}
\end{align}

\end{lemma}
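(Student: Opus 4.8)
The plan is to differentiate each of the three expressions defining $\cl^{L}$, $\bar{\cl}^{L}$, $\tilde{\cl}^{L}$ term by term, and to bound the resulting increments by combining three ingredients already at our disposal: the product-increment estimate of Lemma~\ref{lem.dXi} for the tensor factors $\Xi^{\ell_{2}}\otimes\cdots\otimes\Xi^{\ell_{\al_{i}}}$, the increment estimate of Lemma~\ref{lem.dV} for the composed derivatives $\partial^{\ell_{1}}f(y^{n}_{\cdot})$ (or $\partial^{\ell_{1}}V(y^{n}_{\cdot})$), and the pointwise bounds of Lemma~\ref{lem.lbd} together with Lemma~\ref{lem.pn2} to control the $\Xi$-factors and the inner operators $\cl^{\ell_{r}}g$ at the base point $s$. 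The standard decomposition $\delta(AB)_{su} = \delta A_{su}\,B_{s} + A_{u}\,\delta B_{su}$, iterated over the product, reduces each term to: one factor carrying an increment (hence a factor $\omega(s,u)^{1/p}$) times the remaining factors evaluated at $s$ or $u$ (hence bounded, using $\omega(s,u)^{1/p}\le 1/2$, by constants times powers of $P^{\bullet}_{s}$). Throughout, the superadditivity-free bound $|\Xi^{L}_{u}|\le |\Xi^{L}_{s}| + |\delta\Xi^{L}_{su}| \le (1 + K^{L}_{1})P^{L}_{s}$ lets us replace quantities at $u$ by quantities at $s$ at the cost of an explicit constant.

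First I would treat \eqref{eqn.dlnbd}. Writing $\cl^{L}f(y^{n}_{\cdot}) = \sum_{i\in\ca_{L}} c_{L,i}\,\langle \partial^{\ell_{1}}f(y^{n}_{\cdot}),\, \Xi^{\ell_{2}}_{\cdot}\otimes\cdots\otimes\Xi^{\ell_{\al_{i}}}_{\cdot}\rangle$, I split $\delta(\cl^{L}f(y^{n}_{\cdot}))_{su}$ into the part where $\partial^{\ell_{1}}f(y^{n}_{\cdot})$ is incremented and the part where the tensor product is incremented. For the first part, Lemma~\ref{lem.dV} gives $|\delta(\partial^{\ell_{1}}f(y^{n}_{\cdot}))_{su}|\le C^{0}_{f}K^{0}_{1}\omega(s,u)^{1/p}$, while the tensor factor at $u$ is bounded by $C^{4}_{L}P^{L}_{s}$-type constants via Lemma~\ref{lem.dXi} and the remark above (note $\sum_{r=2}^{\al_{i}}\ell_{r}=L$ by Remark~\ref{remark.suml}, so the product of $\Xi$'s lives in the $P^{L}_{s}$ scale). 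For the second part, Lemma~\ref{lem.dXi} directly gives $|\delta(\Xi^{\ell_{2}}\otimes\cdots\otimes\Xi^{\ell_{\al_{i}}})_{su}|\le C^{\al_{i}-1,\dots}_{\dots}P^{L}_{s}\omega(s,u)^{1/p}$ (with the relevant tuple in $S_{L}$), and $\partial^{\ell_{1}}f$ is uniformly bounded by $C^{0}_{f}$. Summing over $i\in\ca_{L}$ and collecting the resulting constant into the quantity $C^{5}_{f,L}$ of \eqref{eqn.c5} — which is precisely designed with the factors $K^{\ell_{r}}_{1}$ and $(1+C^{4}_{L})$ to absorb exactly these two contributions — yields \eqref{eqn.dlnbd}. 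The base cases $L=0$ and $L=-1$ follow from the conventions $\cl^{0}=\id$, $\cl^{-1}=0$ and from \eqref{eqn.cdy}.

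The estimates \eqref{eqn.dlbnbd} and \eqref{eqn.dtnbd} are handled the same way, using the definitions \eqref{eqn.lbn} and \eqref{eqn.ltn}: the extra subtlety is that some factors are now themselves of the form $\cl^{\ell_{r}}g(y^{n}_{\cdot})$ or $\cl^{\ell_{r}-1}g(y^{n}_{\cdot})$, whose increments are controlled by invoking \eqref{eqn.dlnbd} (just proved) at a lower order $\ell_{r}\le L$, and whose values at the base point $s$ are controlled by Lemma~\ref{lem.lbd}. So one should organize the argument as a single induction on $L$: \eqref{eqn.dlnbd} at order $L$ uses \eqref{eqn.dlnbd} only at the same or lower orders, and \eqref{eqn.dlbnbd}–\eqref{eqn.dtnbd} at order $L$ use \eqref{eqn.dlnbd} at orders $\le L$. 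The constants $C^{6}_{V,L}$ and $C^{7}_{V,L}$ are then defined (implicitly, in the same spirit as \eqref{eqn.c5}) to be whatever finite combination of $c_{L,i}$, $C^{0}_{V}$, $K^{\ell_{r}}_{1}$, $C^{5}_{V,\ell_{r}}$, $C^{1}_{\ell_{r},V}$, $C^{4}_{L}$ comes out of this bookkeeping. I expect the only real obstacle to be notational: keeping the tensor-slot indices straight when the incremented factor is the $r$-th slot $\cl^{\ell_{r}}g$ rather than an $\Xi$, and verifying that every product of $\Xi$-norms that appears is indeed dominated by a single $P^{L}_{s}$ through the multiplicativity $P^{L}_{s}P^{L'}_{s}\le P^{L+L'}_{s}$ of Lemma~\ref{lem.pn2} together with the exponent identity $\sum_r \ell_r = L$. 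There is no analytic difficulty beyond the mean value theorem and the discrete Leibniz rule; the content is entirely in assembling the constants.
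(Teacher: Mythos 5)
Your proposal is correct and follows essentially the same route as the paper: apply the discrete Leibniz rule to the defining sum of $\cl^{L}f(y^{n}_{\cdot})$, bound the incremented factor via Lemma~\ref{lem.dV} or Lemma~\ref{lem.dXi} and the remaining factors via Lemma~\ref{lem.pn2} (using $\omega(s,u)^{1/p}\le 1/2$), and collect the constants into $C^{5}_{f,L}$; the paper's only organizational difference is that it splits the increment into a ``single-increment'' term $J^{1}$ and a ``cross'' term $J^{2}$ rather than replacing values at $u$ by values at $s$ on the fly, and it leaves \eqref{eqn.dlbnbd}--\eqref{eqn.dtnbd} to the analogous argument you spell out.
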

\begin{proof} Recall that $\cl^{L}$ is defined in \eref{eqn.ln}.  For two functions $f$, $g$: $[0,T]\to \RR$ and for the operator $\delta$ defined by \eqref{eq:def-delta}, it is easily seen that 
\begin{eqnarray}\label{eqn.deltafg}
\delta(fg)_{su} = \delta f_{su} \, g_{u}+f_{s} \, \delta g_{su}. 
\end{eqnarray}
   Invoking repeatedly this relation and consistently replacing the terms $g_{u}$ above by $g_{s}$ we end up with the relation 
\begin{align}\label{eqn.dln}
 & \delta\lp
     \cl^{L} f (y^{n}_{\cdot})
    \rp_{su} 
  =
    J^{1}_{su}
  + J^{2}_{su}, 
\end{align}
where   the terms $J^{1}_{su}$ and $J^{2}_{su}$ are defined by 
\begin{align}
J^{1}_{su} =&   \sum_{i\in \ca_{L}}  c_{L,i}
    \Big(
\langle
     \delta\lp    \partial^{\ell_{1}} f (y^{n}_{\cdot})
  \rp_{su}, 
   \Xi^{\ell_{2}}_{s} \otimes\cdots\otimes\Xi^{\ell_{\al_{i}}}_{s}  
   \rangle
  \nonumber
   \\
& \qquad\qquad  +
   \sum_{r=2}^{\al_{i}}
  \langle
       \partial^{\ell_{1}} f (y^{n}_{s}) , 
    \Xi^{\ell_{2}}_{s} 
\otimes   \cdots
 \otimes  \delta  
    \Xi^{\ell_{r}}_{su}\otimes\cdots\otimes\Xi^{\ell_{\al_{i}}}_{s} 
    \rangle 
 \Big) , 
 \label{eqn.j1}
\\
J^{2}_{su} =&  \sum_{i\in \ca_{L}}  c_{L,i}
    \Big\{
   \Big\langle  \delta\lp    \partial^{\ell_{1}} f (y^{n}_{\cdot})
  \rp_{su},
 \delta \lp
   \Xi^{\ell_{2}}_{\cdot}  \otimes\cdots\otimes\Xi^{\ell_{\al_{i}}}_{\cdot}   
 \rp_{su}
 \Big\rangle
\nonumber
  \\
  & \qquad\qquad+
   \sum_{r=2}^{\al_{i}}
  \Big\langle     \partial^{\ell_{1}} f (y^{n}_{s})
 ,
    \Xi^{\ell_{2}}_{s} 
  \otimes \cdots
 \otimes  \delta  
    \Xi^{\ell_{r}}_{su}
  \otimes  \delta  \lp
    \Xi^{\ell_{r+1}}_{\cdot}
   \otimes \cdots\otimes\Xi^{\ell_{\al_{i}}}_{\cdot}   
 \rp_{su} 
 \Big\rangle
 \Big\}. 
 \label{eqn.j2}
\end{align}
In order to bound $J^{1}_{su}$ above we  apply    Lemma \ref{lem.pn2} to the quantities $\Xi^{r}$,     and   Lemma  \ref{lem.dV} to $   \delta\lp    \partial^{\ell_{1}} f (y^{n}_{\cdot})
  \rp_{su} $ in \eref{eqn.j1}.   We get 
  \begin{align}\label{eqn.j1bd}
|J^{1}_{su}| \leq \sum_{i\in \ca_{L}} c_{L,i} \left( C^{0}_{f}K^{0}_{1}  P^{L}_{s}+\sum_{r=2}^{\al_{i}}C^{0}_{f}K^{\ell_{r}}_{1} P^{L}_{s} \right)  \omega(s,u)^{1/p} . 
\end{align}

 We can  bound $J^{2}_{su}$, in a similar way. As before we apply  Lemma \ref{lem.pn2} and   Lemma  \ref{lem.dV} respectively to $\Xi^{r}_{s}$     and    $   \delta\lp    \partial^{\ell_{1}} f (y^{n}_{\cdot})
  \rp_{su} $, and then  apply Lemma \ref{lem.dXi} to  the quantity $\delta  (
    \Xi^{\ell_{r+1}}_{\cdot}
   \otimes \cdots\otimes\Xi^{\ell_{\al_{i}}}_{\cdot}   
 )_{su} $. Taking into acount  the assumption    $\omega(s,u)^{1/p}<1/2$ in \eref{eqn.cdxi}, we obtain  
\begin{align}\label{eqn.j2bd}  
|J^{2}_{su}|
 \leq 
 2 \sum_{i\in \ca_{L}}  c_{L,i}  
  C^{0}_{f}   \Big( K_{1}^{0} C_{L}^{4}  P^{L}_{s}  +  \sum_{r=2}^{\al_{i}}    K_{1}^{\ell_{r}}  C_{L}^{4}  P^{L}_{s}    \Big)
   \cdot  
   \omega(s,u)^{1/p} 
    . 
\end{align}

Combining   the estimates \eref{eqn.j1bd} and \eref{eqn.j2bd} in  \eref{eqn.dln} and  recalling  the definition of $C^{5}_{f,L}$ in~\eref{eqn.c5},   relation   \eref{eqn.dlnbd} is now easily obtained. 
\end{proof}

We end   this subsection with a result on the remainder  of $\cl^{L} V(y^{n}_{s})$ considered as a controlled process:  
\begin{lemma}\label{eqn.dlv}
Let $\omega $ be a control function on $\ll0,T\rr$. 
Suppose that 
\begin{align}\label{eqn.xi2bd}
|\delta \Xi^{L}_{su} - \cl^{L} V(y^{n}_{s})    \delta x_{su}
 -
      \cl^{L-1} V(y^{n}_{s})   \delta b_{su}  | \leq K^{L}_{2}  P^{L}_{s} \omega(s,u)^{2/p}, \qquad L=0,1,\dots, N 
\end{align} 
 for some  $(s,u)\in \cs_{2}(\ll 0, T \rr)$, where $P^{L}_{s}$ is the quantity given in Definition \ref{def.p}. Suppose that \eref{eqn.cdxi} and  \eref{eqn.cddy} holds for the same $(s,u) $. 
Then we have the following relation for $L\geq -1$: 
\begin{align*}
&
\left|
 \delta \lp
 \cl^{L} V(y^{n}_{\cdot})
 \rp_{su}
   -
      \bar{\cl}^{L } \lp \partial V  V \rp (y^{n}_{s})    \delta x_{su} 
      -   \tilde{\cl}^{L  } \lp \partial V  V \rp (y^{n}_{s})    \delta b_{su}
   \right| 
   \leq 
   C^{8}_{V, L}  P^{L}_{s}    
 \omega(s,u)^{2/p}  , 
\end{align*}
where we define the constants  $\{ C^{8}_{V, L} , L\geq -1\}$ by 
\begin{align*}
 C^{8}_{V, L}  = 2\sum_{i\in \ca_{L}} c_{L,i}  
  C^{0}_{V} 
\lp
   K^{0}_{1}K^{0}_{1}+K^{0}_{2}  
+ 
    K_{0}^{1} C^{4}_{L}  
  +
    \sum_{r=2}^{\al_{i}}      (
    K^{\ell_{r}}_{2}
+
 K_{1}^{\ell_{r}}  C^{4}_{L}
 )      
\rp, 
\\
 C^{8}_{V, 0}  = C^{0}_{V} 
  (K^{0}_{1}K^{0}_{1} +K^{0}_{2} ), 
  \qquad 
  C^{8}_{V,-1}=0 . 
\end{align*}

\end{lemma}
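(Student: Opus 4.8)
The plan is to compute $\delta(\cl^{L}V(y^{n}_{\cdot}))_{su}$ explicitly using the Leibniz-type identity \eqref{eqn.deltafg} applied repeatedly to the product structure in the definition \eqref{eqn.ln} of $\cl^{L}$, and then to extract from each resulting term its leading first-order contribution (the part proportional to $\delta x_{su}$ or $\delta b_{su}$) and bound the remainder by $\omega(s,u)^{2/p}$. First I would expand
\[
\cl^{L}V(y^{n}_{s}) = \sum_{i\in\ca_{L}} c_{L,i}\,\langle \partial^{\ell_{1}}V(y^{n}_{s}),\, \Xi^{\ell_{2}}_{s}\otimes\cdots\otimes\Xi^{\ell_{\al_{i}}}_{s}\rangle,
\]
so that its increment is a sum over $i\in\ca_{L}$ of a term where $\partial^{\ell_{1}}V(y^{n}_{\cdot})$ is incremented (controlled by Lemma \ref{lem.ddV}, which supplies the leading term $(\partial^{\ell_{1}+1}VV)(y^{n}_{s})\delta x_{su}$ plus an $\omega^{2/p}$ remainder) and terms where one factor $\Xi^{\ell_{r}}_{\cdot}$ is incremented (controlled by \eqref{eqn.xi2bd}, which supplies the leading terms $\cl^{\ell_{r}}V(y^{n}_{s})\delta x_{su}$ and $\cl^{\ell_{r}-1}V(y^{n}_{s})\delta b_{su}$ plus an $\omega^{2/p}$ remainder), together with higher-order cross terms in which two or more factors are simultaneously incremented.

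The key observation is that collecting the leading-order ($\delta x$ and $\delta b$) contributions from Lemma \ref{lem.ddV} and \eqref{eqn.xi2bd} reproduces \emph{exactly} the definitions \eqref{eqn.lbn} of $\bar{\cl}^{L}(\partial V\cdot V)$ (the $\delta x$ part) and \eqref{eqn.ltn} of $\tilde{\cl}^{L}(\partial V\cdot V)$ (the $\delta b$ part), taking into account that the operators $\cl^{\ell_{r}}$ appear inside $\bar{\cl}^{L}$ and the operators $\cl^{\ell_{r}-1}$ appear inside $\tilde{\cl}^{L}$. This is precisely why the processes $\Xi^{L}$ were designed as in Definition \ref{def.xi}, and this matching is the conceptual heart of the proof. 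Once this identification is made, everything else is a remainder estimate: each remainder factor is bounded using \eqref{eqn.xi2bd} (the $K^{\ell_{r}}_{2}P^{\ell_{r}}_{s}\omega^{2/p}$ bound) or the estimate $|\delta(\Xi^{\ell_{r+1}}_{\cdot}\otimes\cdots\otimes\Xi^{\ell_{\al_{i}}}_{\cdot})_{su}|\le C^{4}_{L}P^{L}_{s}\omega^{1/p}$ from Lemma \ref{lem.dXi}, each undifferentiated factor $\Xi^{\ell_{r}}_{s}$ is bounded by $P^{\ell_{r}}_{s}$ via Lemma \ref{lem.pn2}, each increment $\delta x_{su}$ or $\delta b_{su}$ contributes a factor $\omega(s,u)^{1/p}$ (absorbed into the control exponent, these being $\ga$-H\"older paths), and all the sup-norms of derivatives of $V$ are bounded by $C^{0}_{V}$ via Hypothesis \ref{hyp:regularity-V}. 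Multiplying out the $P^{\cdot}_{s}$ factors and using the submultiplicativity $P^{L'}_{s}P^{L''}_{s}\le P^{L'+L''}_{s}$ together with $\sum_{r=2}^{\al_{i}}\ell_{r}+\ell_{1}$-type bookkeeping (cf.\ Remark \ref{remark.suml}) collapses every term to $P^{L}_{s}$; summing the resulting constants over $i\in\ca_{L}$ and over $r=2,\dots,\al_{i}$ yields exactly the constant $C^{8}_{V,L}$ as written, with the factor $2$ absorbing the cross-terms and the use of $\omega(s,u)^{1/p}\le 1/2$.

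The main obstacle I anticipate is purely organizational rather than analytical: one must carefully track which cross-terms in the iterated Leibniz expansion are genuinely of order $\omega^{2/p}$ and verify that the ``doubled'' structure (increment in $\partial^{\ell_{1}}V$ \emph{or} in one $\Xi^{\ell_{r}}$, but with the trailing tensor block $\Xi^{\ell_{r+1}}\otimes\cdots\otimes\Xi^{\ell_{\al_{i}}}$ possibly also incremented) matches the two-sum structure appearing in \eqref{eqn.lbn}--\eqref{eqn.ltn}. In particular one has to be careful with the base cases $L=0$ and $L=-1$: for $L=0$ the operator $\cl^{0}$ is the identity, so the claim reduces to Lemma \ref{lem.ddV} applied to $V$ itself with $\bar{\cl}^{0}(\partial VV)=\partial VV$ and $\tilde{\cl}^{0}(\partial VV)=0$, giving $C^{8}_{V,0}=C^{0}_{V}(K^{0}_{1}K^{0}_{1}+K^{0}_{2})$; and for $L=-1$ all operators vanish and the statement is trivial with $C^{8}_{V,-1}=0$. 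I would state the $L=-1$ and $L=0$ cases first, then do the general $L\ge 1$ case by the expansion above, and finally remark that the details of the constant-chasing are routine and omitted for conciseness, in the style of the surrounding lemmas.
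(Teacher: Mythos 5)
Your proposal follows essentially the same route as the paper: the authors also expand $\delta(\cl^{L}V(y^{n}_{\cdot}))_{su}$ via the Leibniz identity \eqref{eqn.deltafg}, group the first-order pieces so that they cancel exactly against $\bar{\cl}^{L}(\partial V V)(y^{n}_{s})\delta x_{su}$ and $\tilde{\cl}^{L}(\partial V V)(y^{n}_{s})\delta b_{su}$ (their terms $J^{1}+J^{3}+J^{4}$, controlled by Lemma \ref{lem.ddV} and hypothesis \eqref{eqn.xi2bd}), and bound the cross-term block $J^{2}$ by Lemma \ref{lem.dXi} and Lemma \ref{lem.pn2}, yielding the constant $C^{8}_{V,L}$. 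The identification of the surviving leading terms with the definitions \eqref{eqn.lbn}--\eqref{eqn.ltn} that you single out as the conceptual heart is indeed exactly how the paper organizes the cancellation.
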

\begin{proof}
Recall that   $\cl^{L}$, $\bar{\cl}^{L}$ and $\tilde{\cl}^{L}$ are introduced   
in \eref{eqn.ln}-\eref{eqn.ltn}. Similarly to the beginning of the proof of Lemma \ref{eqn.dlpvv}, we apply relation \eqref{eqn.deltafg} and replace the terms $g_{u}$ by $g_{s}$. This leads to a decomposition of the form  
\begin{align}\label{eqn.dlnvr}
&
 \delta \lp
 \cl^{L} V(y^{n}_{\cdot})
 \rp_{su}
   -
      \bar{\cl}^{L } \lp \partial V  V \rp (y^{n}_{s})    \delta x_{su} 
      -   \tilde{\cl}^{L  } \lp \partial V  V \rp (y^{n}_{s})    \delta b_{su}
  =   
  \sum_{\ell=1}^{4} J_{su}^{\ell}  ,   
\end{align}
where $J^{1}_{su} $  and $J^{2}_{su} $ are defined in \eref{eqn.j1}-\eref{eqn.j2}, and we also introduce the increments
\begin{align*}
J^{3}_{su} =&-
  \sum_{ i \in  \ca_{L}}   
 c_{L,i}
 \langle
(\partial^{\ell_{1}+1 } V  V)(y^{n}_{s}) \delta x_{su}   ,  
 \Xi^{\ell_{2}}_{s} \otimes  \cdots\otimes   \Xi^{\ell_{\al_{i}}}_{s} 
 \rangle 
 \\
 J^{4}_{su}  = & - \sum_{ i \in  \ca_{L}} 
\sum_{r=2}^{\al_{i}} 
 c_{L,i}    
 \langle
\partial^{\ell_{1}  } V(y^{n}_{s})     ,  
 \Xi_{s}^{\ell_{2}}  \otimes  \cdots\otimes 
\lp
 \cl^{\ell_{r}} V(y^{n}_{s}) \delta x_{su} +\cl^{
 \ell_{r}-1}V(y^{n}_{s}) \delta b_{su} 
\rp\otimes
 \cdots\otimes  \Xi_{s}^{\ell_{\al_{i}}} \rangle .   
\end{align*}
   Notice that one can combine $J^{1},J^{3}$ and $J^{4}$ into
   \begin{align*}
 &
 J^{1}_{su} +J^{3}_{su} +J^{4}_{su}   
 \\
 &=  
  \sum_{ i \in  \ca_{L}}   
 c_{L,i}
 \left\langle
 \Big(
   \delta\lp    \partial^{\ell_{1}} V (y^{n}_{\cdot})
  \rp_{su} -
(\partial^{\ell_{1}+1 } V  V)(y^{n}_{s}) \delta x_{su} 
\Big)
  ,  
 \Xi^{\ell_{2}}_{s} \otimes  \cdots \otimes  \Xi^{\ell_{\al_{i}}}_{s}  
 \right\rangle
 \\
 &
 \qquad
 + \sum_{ i \in  \ca_{L}} 
\sum_{r=2}^{\al_{i}} 
 c_{L,i}    
\left\langle
\partial^{\ell_{1}  } V(y^{n}_{s})     ,  
 \Xi_{s}^{\ell_{2}}   \otimes \cdots\otimes 
\Big(
\delta \Xi^{\ell_{r}}_{su}
-
 \cl^{\ell_{r}} V(y^{n}_{s}) \delta x_{su} -\cl^{
 \ell_{r}-1}V(y^{n}_{s}) \delta b_{su} 
\Big)\otimes
 \cdots  \otimes\Xi_{s}^{\ell_{\al_{i}}} \right\rangle . 
\end{align*}
We are now in a position to apply Lemma \ref{lem.ddV} and condition \eref{eqn.xi2bd} in order to get: 
\begin{align*}
| J^{1}_{su} +J^{3}_{su} +J^{4}_{su} | \leq
&
 \sum_{i\in \ca_{L}} c_{L, i}  P^{L}_{s}    C^{0}_{V} 
  (K^{0}_{1}K^{0}_{1} +K^{0}_{2} )\omega (s,u)^{2/p}  
  \\
  &
  +\sum_{i\in \ca_{L}}\sum_{r=2}^{\al_{i}}c_{L,i} C^{0}_{V}        K^{\ell_{r}}_{2}  P^{L}_{s}  \omega(s,u)^{2/p}.  
\end{align*}
Combining this estimate and the relation \eref{eqn.j2bd}  in \eref{eqn.dlnvr}, and taking into account the definition of the constant $C^{8}_{ V,L} $  we obtain the desired relation.  
\end{proof}

 \subsection{Upper-bound estimate of the derivatives}\label{section.bound}
 
 In this subsection, we derive a uniform upper-bound estimate for the  Malliavin derivatives of $y^{n}$. 
 For a given threshold $\al>0$, 
  our estimates consist of three parts, which are estimates of the derivative over the steps of  (1) small size  ($ <\!\!< \al$);
 (2) medium size ($\approx \al$); (3) large size ($>\!\!> \al$). 
 
 We now specify our threshold parameter $\al$. Towards this aim, recall that   $K^{L}_{1}$ and $K^{L}_{2}$ for $L=0,1,\dots, N$, are introduced in \eref{eqn.k12} and $K_{\mu}$ is defined in \eqref{eqn.kmu}.  We also define: 
\begin{align}\label{eqn.k34}
 K_{3}^{L} = K_{\mu} K_{4}^{L}  \vee 1, \quad \text{where}
 \quad
 K_{4}^{L} =  (  C^{8}_{V, L} + C^{8}_{V, L-1} +4 C^{6}_{V,L}+4C^{7}_{V,L})  \vee 1. 
\end{align}
Then we shall resort to a positive  constant $\al$ such that:   
 \begin{align}\label{eqn.al}
\al^{1/p}=&\min
\{
  1/2,   1/{K_{2}^{L}}  ,  1/{K_{3}^{L} },    L=0,1,\dots, N\}. 
\end{align}
 Eventually we introduce some second chaos processes which play a prominent role in the analysis of Euler schemes (see \cite{LT}). Namely for $[s,t] \in\ll0,T\rr$ we set 
 \begin{eqnarray}\label{eqn.q}
q_{st}^{ij}= \sum_{s\leq t_{k}<t} \lp   x_{t_{k}t_{k+1}}^{2,ij} -\frac12 \Delta^{2H} \mathbf{1}_{\{i=j\}} \rp \, ,
\quad\text{and}\quad
q^{b, ij}_{st} = \sum_{s\leq t_{k}<t} \lp   b_{t_{k}t_{k+1}}^{2,ij} -\frac12 \Delta^{2H} \mathbf{1}_{\{i=j\}} \rp   .
\end{eqnarray}
For convenience we also introduce a specific notation for the cross integrals between the independent fractional Brownian motions $x$ and $b$. Namely we first introduce a Gaussian process $\text{w}$ which encompasses the coordinates of both the driving nose $x$ and the extra noise~$b$. Specifically we define
\begin{eqnarray}\label{eqn.w.def}
\delta \text{w}_{st} := (\delta \text{w}_{st}^{1}, \dots, \delta \text{w}_{st}^{2d}    ):=  (\delta x_{st}^{1}, \dots,\delta x_{st}^{d}, \delta b_{st}^{1},   \dots, \delta b_{st}^{ d}    ) . 
\end{eqnarray}
 Then 
 writing $\text{w}^{2}$ for the iterated integral of $\text{w}$ (see Definition \ref{def:rough-path}) 
 we set 
\begin{align}
\tilde{\text{w}}_{st} =&
  \lp
 \text{w}^{2, ij}_{st}
 ,\, i=d+1, \dots, 2d ,\, j=1,\dots, d \rp   
 \label{eqn.wt}
\\
\tilde{q}_{st} =
&
 \lp
\sum_{s\leq t_{k}<t} \text{w}^{2, ij}_{t_{k}t_{k+1}}
, \, i=d+1, \dots, 2d ,\, j=1,\dots, d \rp   \, ,
\label{eqn.qt}
\end{align}
for $s,t\in \cs_{2}([0,T])$.  
With this notation in hand, we now state our bound on derivatives of the Euler scheme. 
 
 \begin{theorem}\label{thm.bd}
 Let $y$ and $y^{n}$ be the solution of the SDE \eqref{e.sde} and the corresponding Euler scheme~\eref{eqn.euler}, respectively. Let $\Xi$ be given in Definition \ref{def.xi}. 
 Suppose that $V\in C^{(L+2)\vee 3}_{b}$ for some integer  $L\geq 0$. Let $p>1/H$. Let  $\emph{\text{w}}=(x,b)$ be defined in \eqref{eqn.w.def} and let  $\bfw:=S_{2}(\emph{\text{w}})$ be the rough path lifted from $\emph{\text{w}}$. 
    We introduce a control $\omega$ by  
  \begin{align}\label{eqn.control}
  \omega (s,t) = \| \bfw\|_{p\text{-var}; \ll s,t\rr}^{p} + \|q\|_{p/2\text{-var};  \ll s,t\rr }^{p/2} + \|q^{b}\|_{p/2\text{-var};  \ll s,t\rr }^{p/2},  
    \qquad (s,t)\in  \cs_{2}(\ll 0, T\rr),  
\end{align}
where $q$ is defined in \eref{eqn.q}. 
  Denote $s_{0}=0$. Then
given $ s_{j}$, we define $s_{j+1}$ recursively as
\begin{equation}\label{eqn.sj}
s_{j+1}=
\begin{cases}
s_{j}+\Delta \, ,
&\textnormal{if } \omega(s_{j}, s_{j}+\Delta) > \al \\
\max  \{u \in \ll 0,T \rr :\, u>s_{j} \text{ and }   \omega(s_{j}, u)\leq \al\} \, ,
&\textnormal{if } \omega(s_{j}, s_{j}+\Delta) \le \al
\end{cases}
\end{equation}
Next we split the set of $s_{j}$'s as   
 \begin{align}
&S_{0} = \{s_{j}:  \al/2\leq \omega(s_{j}, s_{j+1})\leq \al\};
\quad
 S_{1}=  \{s_{j}:    \omega(s_{j}, s_{j+1}  )< \al/2\};
 \label{eqn.s01}
\\
&S_{2}=  \{s_{j}:    \omega(s_{j}, s_{j+1} )> \al\}. 
\label{eqn.cs2}
\end{align}
 Then we have:
 \begin{enumerate}[wide, labelwidth=!, labelindent=0pt, label= \emph{(\alph*)}]
\setlength\itemsep{.05in}
 \item 
 The following relation holds for all   $(s,t) \in \cs_{2}(\ll 0,T \rr)$:
\begin{align}\label{eqn.z.pbd}
 \|\Xi^{L}\|_{p\tvr, \ll s,t\rr} \leq K   \cdot \omega(s,t)^{1/p} |S_{0}\cup S_{1}\cup S_{2}| \cdot (\cm_{0}\cdot \cm_{1}\cdot \cm_{2})^{L}\,,
\end{align}
where we have set 
\begin{align}\label{eqn.ms}
&\cm_{0} =  
\prod_{s_{j}\in S_{0} } \lp K \omega(s_{j},s_{j+1})^{1/p}   +1\rp,
\qquad \cm_{1} =  
\prod_{s_{j}\in   S_{1}} \lp K \omega(s_{j},s_{j+1})^{1/p}   +1\rp,
\nonumber
 \\
& \cm_{2} =   
 \prod_{s_{j}\in S_{2} }
  \lp K  |  \delta{\emph{\text{w}}}_{s_{j}s_{j+1}}| +K\Delta^{2H} +1\rp 
 ,
\end{align}
and $K$ is a constant independent of $n$. 

\item
For $(s,t)\in \cs_{2}(\ll s_{j},s_{j+1} \rr)$ such that  $s_{j}\in S_{0}\cup S_{1}$ we have 
\begin{eqnarray*}
|\delta \Xi^{L}_{st} - \cl^{L} V(y^{n}_{s})    \delta x_{st}
 -
      \cl^{L-1} V(y^{n}_{s})   \delta b_{st}  | \leq K    \omega(s,t)^{2/p} \cdot P^{L}_{s}. 
\end{eqnarray*}
\end{enumerate}
 \end{theorem}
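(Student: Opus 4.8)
The plan is to run a double induction: an outer induction on the differentiation order $L=0,1,\dots$, and, for each fixed $L$, an inner ``block‑by‑block'' analysis along the greedy partition $\{s_{j}\}$ defined in \eqref{eqn.sj}. Assertions (a) and (b), together with the local bounds \eqref{eqn.cdxi} and \eqref{eqn.xi2bd}, are all established simultaneously at each order $L$; indeed (b) is exactly \eqref{eqn.xi2bd} restricted to the small/medium blocks. The base case $L=0$ reduces to $\Xi^{0}=y^{n}$ (Definition \ref{def.xi}, since $\cl^{-1}=\tilde{\cl}^{-1}=0$); there $\cl^{0}V=V$ and $\cl^{-1}V=0$, so (b) is the standard controlled‑process estimate $|\delta y^{n}_{st}-V(y^{n}_{s})\delta x_{st}|\le K\omega(s,t)^{2/p}$ and (a) is the $p$‑variation bound for the Euler scheme, both re‑derived as in \cite{LT} by the very same greedy partition. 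We then fix $L\ge 1$ and assume that (a), (b), \eqref{eqn.cdxi} and \eqref{eqn.xi2bd} hold at all orders $<L$; this makes the auxiliary Lemmas \ref{lem.dXi}, \ref{lem.dV}, \ref{lem.ddV}, \ref{lem.lbd}, \ref{eqn.dlpvv}, \ref{eqn.dlv} applicable at all orders $\le L$ once their running hypotheses are checked on the block at hand, and it is this checking that drives the proof. Since $H>1/3$ we may and do take $p<3$.

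\emph{Small/medium blocks: proof of (b) and of \eqref{eqn.cdxi}--\eqref{eqn.xi2bd}.} Fix a block $\ll s_{j},s_{j+1}\rr$ with $s_{j}\in S_{0}\cup S_{1}$, so $\omega(s_{j},s_{j+1})\le\al$; by \eqref{eqn.al} every $(s,u)\in\cs_{2}(\ll s_{j},s_{j+1}\rr)$ then satisfies $\omega(s,u)^{1/p}\le\min\{1/2,\,1/K_{2}^{L},\,1/K_{3}^{L}\}$, exactly the thresholds required by those lemmas. Summing \eqref{eqn.xin} over $\ll s,t\rr$ gives $\delta\Xi^{L}_{st}=\sum_{s\le t_{k}<t}\cl^{L}V(y^{n}_{t_{k}})\delta x_{t_{k}t_{k+1}}+\sum_{s\le t_{k}<t}\cl^{L-1}V(y^{n}_{t_{k}})\delta b_{t_{k}t_{k+1}}+\frac12\sum_{j}\sum_{s\le t_{k}<t}(\bar{\cl}^{L}+\tilde{\cl}^{L-1})(\partial V_{j}V_{j})(y^{n}_{t_{k}})\Delta^{2H}$. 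Replacing each coefficient $\cl^{L}V(y^{n}_{t_{k}})$, $\cl^{L-1}V(y^{n}_{t_{k}})$ by its controlled‑process expansion around $s$ (Lemma \ref{eqn.dlv}) and using the shuffle identity $\frac12\Delta^{2H}=x^{2,jj}_{t_{k}t_{k+1}}-q^{jj}_{t_{k}t_{k+1}}$ (cf. \eqref{eqn.q}) and its analogue for $b$, one rewrites $\delta\Xi^{L}_{st}$ as $\cl^{L}V(y^{n}_{s})\delta x_{st}+\cl^{L-1}V(y^{n}_{s})\delta b_{st}$ plus a collection of second‑order terms (coefficients frozen at $s$ against $\bfw^{2}$, $q$, $q^{b}$, and the cross quantities $\tilde{\text{w}},\tilde{q}$ of \eqref{eqn.wt}--\eqref{eqn.qt}) plus a remainder $\hat R^{L}_{st}$. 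The second‑order terms are each $\lesssim P^{L}_{s}\,\omega(s,t)^{2/p}$ by Lemma \ref{lem.lbd} together with $\|\bfw\|_{p\tvr;\ll s,t\rr}^{2},\,\|q\|_{p/2\tvr;\ll s,t\rr},\,\|q^{b}\|_{p/2\tvr;\ll s,t\rr}\le\omega(s,t)^{2/p}$ (the cross quantities being handled as in \cite{LT}); while $\hat R^{L}$ satisfies a vanishing one‑step bound and $|\delta\hat R^{L}_{sut}|\lesssim P^{L}_{s}\,\omega(s,t)^{3/p}$ (from the increment estimates of Lemma \ref{eqn.dlpvv} and Lemma \ref{lem.dXi} and super‑additivity of $\omega$), so the discrete sewing Lemma \ref{lem2.4}, \emph{legitimately} applied with exponent $\mu=3/p>1$, gives $|\hat R^{L}_{st}|\lesssim P^{L}_{s}\,\omega(s,t)^{3/p}\le P^{L}_{s}\,\omega(s,t)^{2/p}$ on the block. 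Collecting, $|\delta\Xi^{L}_{st}-\cl^{L}V(y^{n}_{s})\delta x_{st}-\cl^{L-1}V(y^{n}_{s})\delta b_{st}|\le K\,\omega(s,t)^{2/p}P^{L}_{s}$, which is assertion (b); tracking the constants through \eqref{def.cv}--\eqref{eqn.c5} and \eqref{eqn.k12}--\eqref{eqn.k34} yields \eqref{eqn.xi2bd} with $K=K_{2}^{L}$. Finally \eqref{eqn.cdxi} follows by adding back $|\cl^{L}V(y^{n}_{s})\delta x_{su}|+|\cl^{L-1}V(y^{n}_{s})\delta b_{su}|\le(C^{1}_{L,V}+C^{1}_{L-1,V})P^{L}_{s}\,\omega(s,u)^{1/p}$ (Lemma \ref{lem.lbd}) and absorbing the $\omega(s,u)^{2/p}$‑term via $\omega(s,u)^{1/p}\le 1/K_{2}^{L}$, which produces precisely $K_{1}^{L}$ of \eqref{eqn.k12}.

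\emph{Large blocks and concatenation: proof of (a).} If $s_{j}\in S_{2}$ then $s_{j+1}=s_{j}+\Delta$ is a single step, and \eqref{eqn.xin} with Lemma \ref{lem.lbd} gives directly $\|\Xi^{L}\|_{p\tvr;\ll s_{j},s_{j+1}\rr}=|\delta\Xi^{L}_{s_{j}s_{j+1}}|\le K\,P^{L}_{s_{j}}\bigl(|\delta\text{w}_{s_{j}s_{j+1}}|+\Delta^{2H}\bigr)$, which is the factor appearing in $\cm_{2}$. If $s_{j}\in S_{0}\cup S_{1}$, then \eqref{eqn.cdxi} and super‑additivity of $\omega$ give $\|\Xi^{L}\|_{p\tvr;\ll s_{j},s_{j+1}\rr}\le K\bigl(\max_{s\in\ll s_{j},s_{j+1}\rr}P^{L}_{s}\bigr)\omega(s_{j},s_{j+1})^{1/p}\le K\,P^{L}_{s_{j}}\,\omega(s_{j},s_{j+1})^{1/p}$, the factor appearing in $\cm_{0}$ and $\cm_{1}$. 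The process $P^{L}$ itself grows multiplicatively across blocks: by \eqref{eqn.xin} the process $\Xi^{L}$ solves a \emph{linear} difference equation in $\Xi^{L}$ — its coefficient $\cl^{L}V(y^{n}_{\cdot})$ contains the term $c_{L,i}\langle\partial V(y^{n}_{\cdot}),\Xi^{L}_{\cdot}\rangle$ coming from the branch $(1,2,\dots,2)\in\ca_{L}$ — forced by the lower‑order processes, so the one‑block estimates give $|\Xi^{\ell}_{s_{j+1}}|\le P^{\ell}_{s_{j}}\bigl(1+K(\text{block factor}_{j})\bigr)$ for every $\ell\le L$ and hence, by $P^{\ell'}P^{\ell''}\le P^{\ell'+\ell''}$ from Lemma \ref{lem.pn2}, $P^{L}_{s_{j+1}}\le P^{L}_{s_{j}}\bigl(1+K(\text{block factor}_{j})\bigr)^{L}$. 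Iterating from $s_{0}=0$ and grouping the block factors according to \eqref{eqn.s01}--\eqref{eqn.cs2} bounds $P^{L}_{s_{j}}$ by $(\cm_{0}\cm_{1}\cm_{2})^{L}$; summing $\|\Xi^{L}\|_{p\tvr;\ll s_{j},s_{j+1}\rr}\le K\,\omega(s,t)^{1/p}(\cm_{0}\cm_{1}\cm_{2})^{L}$ over the $|S_{0}\cup S_{1}\cup S_{2}|$ blocks between $s$ and $t$ yields \eqref{eqn.z.pbd}.

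\emph{Main obstacle.} The delicate point is not any single estimate but the internal consistency of the induction: one must verify on the fly that at order $L$ all running hypotheses of Lemmas \ref{lem.dXi}--\ref{eqn.dlv} hold on each block; handle the self‑referential, linear‑in‑$\Xi^{L}$ term in the coefficient $\cl^{L}V$, which forces the greedy‑partition/linear‑RDE machinery (as for the Jacobian, Proposition \ref{prop:moments-jacobian}) rather than a naive one‑shot bound and is exactly what produces the \emph{product} $\cm_{0}\cm_{1}\cm_{2}$ and the power $L$; propagate the multinomial tree constants $c_{L,i}=\ell_{2}!\cdots\ell_{\al_{i}}!/L!$ through the recursions \eqref{def.cv}--\eqref{eqn.c5} and \eqref{eqn.k12}--\eqref{eqn.k34}; and keep the threshold $\al$ of \eqref{eqn.al}, the number of blocks $|S_{0}\cup S_{1}\cup S_{2}|$, and the block factors explicit and, crucially, independent of $n$. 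The exponent bookkeeping — carrying out a full second‑order (Milstein‑type) expansion whose remainder is of order $\omega^{3/p}$ with $3/p>1$, while only a first‑order remainder $\omega^{2/p}$ is recorded in (b) — is precisely the place where the restriction $H>1/3$ enters.
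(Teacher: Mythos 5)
Your proposal is correct and follows essentially the same route as the paper: the frozen-coefficient (controlled-process) decomposition with a remainder killed on single steps and controlled via the discrete sewing Lemma~\ref{lem2.4} at exponent $3/p>1$, the greedy partition with threshold $\al$ from \eqref{eqn.al} splitting into small/medium blocks (where \eqref{eqn.indxi}--\eqref{eqn.indr} are bootstrapped) and large blocks (crude one-step bound), and the multiplicative iteration of $P^{L}$ across blocks producing $(\cm_{0}\cm_{1}\cm_{2})^{L}$. The only organizational difference is your outer induction on $L$ versus the paper's simultaneous treatment of all orders $L\le N$ inside an induction on the right endpoint of each block; because of the linear-in-$\Xi^{L}$ term you correctly identify, both versions reduce to the same bootstrap, so this is not a substantive deviation.
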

 
\begin{remark}\label{remark.hnorm}
The reader might argue that the right-hand side of~\eqref{eqn.z.pbd} still depends on $n$. However,
in our companion paper \cite{LLT1} we will show that this right-hand side is uniformly integrable in $n$. Thus 
$\Xi^{L}_{t}$ is also uniformly integrable in $n$. 
 \end{remark}

Before proving Theorem \ref{thm.bd}, let us state a corollary giving the actual bound on the Malliavin derivatives of $y^{n}$. Recall that $b$ is an independent copy of the fBm $x$ as given in Definition \ref{def.xi} and we  denote $\hat{D}$ the Malliavin derivative operator for $b$. We  denote 
$\hat{\mE}$ and $\hat{\DD}^{L,p}$ the   expectation and the Soblev space corresponding to $b$, respectively.

\begin{cor}\label{cor.dy.bd}
Under the same conditions as for Theorem \ref{thm.bd} and recalling our notation from Section \ref{subsection.d}, we have 
   \begin{eqnarray*}
\sup_{n\in\NN}\| {\bar{D}}^{L}  y^{n}_{t}\|_{\bar{\ch}^{\otimes L}} \leq K   \cdot \omega(0,T)^{1/p} |S_{0}\cup S_{1}\cup S_{2}| \cdot (\cm_{0}\cdot \cm_{1}\cdot \cm_{2})^{L}.
\end{eqnarray*}
\end{cor}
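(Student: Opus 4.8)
The plan is to deduce the corollary from the algebraic identity of Lemma~\ref{lem.dhd} together with the chaos representation of the Cameron--Martin norm underlying Inahama's approach, and then to insert the pathwise estimate of Theorem~\ref{thm.bd}(a). First I would invoke Lemma~\ref{lem.dhd} (with the constants $c_{L,i}=\tilde{c}_{L,i}=\ell^{i}_{2}!\cdots\ell^{i}_{\al_{i}}!/L!$), which gives, for every $\bar h\in\bar{\ch}^{d}$,
\[\bar{D}^{L}_{\bar h}y^{n}_{t}=\hat{D}^{L}_{\bar h}\Xi^{L}_{t},\]
$\Xi^{L}$ being the process of Definition~\ref{def.xi}. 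The two sides equal $\langle\bar{D}^{L}y^{n}_{t},\bar h^{\otimes L}\rangle$ and $\langle\hat{D}^{L}\Xi^{L}_{t},\bar h^{\otimes L}\rangle$ respectively, hence are homogeneous of degree $L$ in $\bar h$; since $\bar{D}^{L}y^{n}_{t}$ and $\hat{D}^{L}\Xi^{L}_{t}$ are symmetric tensors, a polarization argument shows that they coincide as elements of $\bar{\ch}^{\otimes L}$ once the Cameron--Martin space of $x$ is identified with the isometric Cameron--Martin space of $b$. In particular
\[\|\bar{D}^{L}y^{n}_{t}\|_{\bar{\ch}^{\otimes L}}=\|\hat{D}^{L}\Xi^{L}_{t}\|_{\bar{\ch}^{\otimes L}}.\]

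Next I would exploit the fact, established inside the proof of Lemma~\ref{lem.dhd}, that $\Xi^{L}_{t}$ lies (conditionally on $x$) in the $L$-th Wiener chaos $\ck^{b}_{L}$ of $b$. Writing such an $\RR^{m}$-valued variable as $G=I^{b}_{L}(g)$ with $g$ symmetric, one has $\hat{D}^{L}G=L!\,g$ and $\hat{\mE}[|G|^{2}]=L!\,\|g\|^{2}$, so that $\|\hat{D}^{L}G\|_{\bar{\ch}^{\otimes L}}\le K\,\hat{\mE}[|G|^{2}]^{1/2}$ with $K$ depending only on $L$ and $m$. Applying this to $G=\Xi^{L}_{t}$ gives, for $L\ge1$,
\[\|\bar{D}^{L}y^{n}_{t}\|_{\bar{\ch}^{\otimes L}}\le K\,\hat{\mE}\big[|\Xi^{L}_{t}|^{2}\big]^{1/2}\]
(the case $L=0$ being trivial). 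The integrability needed to make these steps rigorous --- namely $y^{n}_{t}\in\bar{\DD}^{L,2}$ and $\Xi^{L}_{t}\in L^{2}(\hat{\PP})$ --- follows from the explicit form~\eqref{eqn.euler2} of the scheme, all increments of $x$ and $b$ having moments of every order, together with the integrability estimates of the companion paper~\cite{LLT1}.

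It then remains to control $|\Xi^{L}_{t}|$. Since $\Xi^{L}_{0}=0$ for $L\ge1$ (the initial condition $y^{n}_{0}=a$ being deterministic), we have $|\Xi^{L}_{t}|=|\delta\Xi^{L}_{0t}|\le\|\Xi^{L}\|_{p\tvr;\ll0,T\rr}$, and Theorem~\ref{thm.bd}(a) applied with $(s,t)=(0,T)$ gives
\[|\Xi^{L}_{t}|\le K\,\omega(0,T)^{1/p}\,|S_{0}\cup S_{1}\cup S_{2}|\,(\cm_{0}\cm_{1}\cm_{2})^{L}.\]
Combining this with the previous display, absorbing $\sqrt{L!}$ and the remaining universal constants into $K$, and taking the $\hat{\mE}$-expectation (implicit in the statement of the corollary, since the left-hand side is $\sigma(x)$-measurable whereas $\omega$, the $S_{i}$'s and the $\cm_{i}$'s involve $(x,b)$; compare Remark~\ref{remark.hnorm}), we obtain the asserted bound; its uniformity in $n$ then follows from the uniform integrability in $n$ of this right-hand side, proved in~\cite{LLT1}. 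The only genuinely delicate point internal to this argument is the first step --- passing from the directional identity of Lemma~\ref{lem.dhd} to an identity between the full Malliavin tensors and matching $\|\bar{D}^{L}y^{n}_{t}\|_{\bar{\ch}^{\otimes L}}$ with the $L^{2}(\hat{\PP})$-norm of $\Xi^{L}_{t}$; the substantive analytic work has already been carried out in Theorem~\ref{thm.bd}.
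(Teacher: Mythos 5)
Your argument is essentially the paper's own proof: both rest on the identity $\bar D^L_{\bar h}y^n_t=\hat D^L_{\bar h}\Xi^L_t$ from Lemma~\ref{lem.dhd}, the fact that $\Xi^L_t$ lies in a finite Wiener chaos of $b$ (so that the $\bar{\ch}^{\otimes L}$-norm of its $L$-th derivative is controlled by an $L^p(\hat{\PP})$-norm of $\Xi^L_t$), and the pathwise bound of Theorem~\ref{thm.bd}(a). You simply spell out the polarization step, the explicit chaos computation, and the $b$-measurability of the right-hand side (cf.\ Remark~\ref{remark.hnorm}), all of which the paper's two-line proof leaves implicit.
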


\begin{proof}
Because $\Xi^{L}_{t}$ as a functional of $b$ is in a finite chaos, we have $\|\Xi^{L}_{t}\|_{\hat{\mathbb{D}}^{L,p}}\leq C (\hat{\mE}|\Xi^{L}_{t}|^{p})^{1/p} $. 
Our claim is thus  an easy consequence of \eqref{eqn.z.pbd} combined with Lemma   \ref{lem.dhd}. 
\end{proof}

 \begin{proof}[Proof of Theorem \ref{thm.bd}]
 The proof is divided into several steps.

\noindent \textit{Step 1. Representation of remainders.} 
  Recall the definition of $\Xi^{L}$ in \eqref{eqn.xin}, that is
   \begin{eqnarray}\label{eqn.dxil}
\delta    \Xi^{L}_{ t_{k}t_{k+1} } &=&  
   \cl^{L}   V(y^{n}_{t_{k}})    \delta x_{t_{k}t_{k+1}}
 +
    \cl^{L-1} V(y^{n}_{t_{k}})  \delta b_{t_{k}t_{k+1}} 
\\
&&+ 
\frac12  \sum_{j=1}^{d}  \bar{\cl}^{L}
\lp \partial V_{j}\cdot V_{j}\rp (y^{n}_{t_{k}}) 
  \Delta^{2H}
+ 
\frac 12  \sum_{j=1}^{d}  \tilde{\cl}^{L-1}
\lp \partial V_{j}\cdot V_{j}\rp (y^{n}_{t_{k}}) 
  \Delta^{2H} .
  \nonumber
\end{eqnarray}
Next  observe that owing to our definition \eqref{eqn.q}, \eqref{eqn.wt} and \eqref{eqn.qt}  we have
\begin{eqnarray*}
\frac12 \Delta^{2H} \id_{d}= x^{2}_{t_{k}t_{k+1}}-q_{t_{k}t_{k+1}}, 
\quad\quad
\frac12 \Delta^{2H} \id_{d} = b^{2}_{t_{k}t_{k+1}}-q^{b}_{t_{k}t_{k+1}} 
\quad\text{and}\quad
\tilde{\text{w}}^{2}_{t_{k}t_{k+1}} - \tilde{q}_{t_{k}t_{k+1}}=0.  
\end{eqnarray*}
 Hence recalling our notation \eqref{eqn.wt}-\eqref{eqn.qt}, one can recast \eqref{eqn.dxil} as
  \begin{eqnarray}\label{eqn.dxil.new}
\delta    \Xi^{L}_{ t_{k}t_{k+1} } &=&  
   \cl^{L} V(y^{n}_{t_{k}})    \delta x_{t_{k}t_{k+1}}
 +
      \cl^{L-1} V(y^{n}_{t_{k}})   \delta b_{t_{k}t_{k+1}} 
 + 
  \bar{\cl}^{L} \lp \partial V  V \rp (y^{n}_{t_{k}})    (x^{2}_{t_{k}t_{k+1}}-q_{t_{k}t_{k+1}})
     \nonumber
\\
&& + 
     \tilde{\cl}^{L-1} \lp \partial V  V \rp (y^{n}_{t_{k}})   ( b^{2}_{t_{k}t_{k+1}} -  {q}^{b}_{t_{k}t_{k+1}} )
+ 
   \tilde{\cl}^{L} \lp \partial V  V \rp (y^{n}_{t_{k}})    (\tilde{\text{w}}^{2}_{t_{k}t_{k+1}}-\tilde{q}_{t_{k}t_{k+1}})
     \nonumber
 \\
&&
  + 
     \bar{\cl}^{L-1} \lp \partial V  V \rp (y^{n}_{t_{k}})   ( \tilde{\text{w}}^{2}_{t_{k}t_{k+1}} - \tilde{q}_{t_{k}t_{k+1}} )^{T}
 . 
\end{eqnarray}
 This suggests to define  the following  remainder process for $L=0,1,\dots, N$ and $s,t\in \ll0,T\rr$:  
 \begin{align} \label{eqn.r}
 R^{L}_{st}
  =& -\delta\Xi^{L}_{ st } +    \cl^{L} V(y^{n}_{s})    \delta x_{st}
 +
      \cl^{L-1} V(y^{n}_{s})   \delta b_{st} 
      \nonumber
\\
&
\qquad+ 
  \bar{\cl}^{L} \lp \partial V  V \rp (y^{n}_{s})    (x^{2}_{st}-q_{st})
 + 
     \tilde{\cl}^{L-1} \lp \partial V  V \rp (y^{n}_{s})   ( b^{2}_{st} -  {q}^{b}_{st} )
     \nonumber
 \\
&
\qquad+ 
   \tilde{\cl}^{L} \lp \partial V  V \rp (y^{n}_{s})    (\tilde{\text{w}}^{2}_{st}-\tilde{q}_{st})
 + 
     \bar{\cl}^{L-1} \lp \partial V  V \rp (y^{n}_{s})   ( \tilde{\text{w}}^{2}_{st} - \tilde{q}_{st} )^{T}
 . 
\end{align}
Notice that as a straightforward consequence of \eqref{eqn.dxil.new} we have   $R_{t_{k}t_{k+1}}^{L}=0$ for all $k$. 

Our aim is now to prove that $R^{L}$ is indeed a small remainder by applying Lemma \ref{lem2.4}. Since $R_{t_{k}t_{k+1}}^{L}=0$ it remains to analyze the increment $\delta R$ as defined in \eqref{eq:def-delta}. Now starting from \eqref{eqn.r}, an elementary computation yields:  
\begin{align*}
\delta R^{L}_{sut} = E_{sut}^{1}+\cdots+E_{sut}^{5},
\end{align*}
where we define
\begin{align*}
E_{sut}^{1}= & -  
\delta \lp
 \cl^{L} V(y^{n}_{\cdot})
 \rp_{su}
    \delta x_{ut}
 \\
  E_{sut}^{2}=&
  -  
\delta \lp
 \cl^{L-1} V(y^{n}_{\cdot})
 \rp_{su}
    \delta b_{ut}
 \\
 E_{sut}^{3}=&
      \bar{\cl}^{L } \lp \partial V  V \rp (y^{n}_{s})    \delta x_{su}\otimes \delta x_{ut}
  +
    \tilde{\cl}^{L} \lp \partial V  V \rp (y^{n}_{s})    \delta b_{su}\otimes \delta x_{ut} \\
 E_{sut}^{4}
 = &
     \tilde{\cl}^{L-1 } \lp \partial V  V \rp (y^{n}_{s})    \delta b_{su}\otimes \delta b_{ut}
 + 
   \bar{\cl}^{L-1} \lp \partial V  V \rp (y^{n}_{s})   \delta x_{su}\otimes \delta b_{ut}
    \\ 
    E_{sut}^{5} 
    =& 
 - 
  \delta\lp
    \bar{\cl}^{L} \lp \partial V  V \rp (y^{n}_{\cdot})
    \rp_{su}
        (x^{2}_{ut}-q_{ut})
  - 
  \delta\lp
    \tilde{\cl}^{L-1} \lp \partial V  V \rp (y^{n}_{\cdot})
    \rp_{su}
        (b^{2}_{ut}-q^{b}_{ut})
 \\
 &
 -\delta\lp
  \tilde{\cl}^{L} \lp \partial V  V \rp (y^{n}_{\cdot})   
  \rp_{su}
   (\tilde{\text{w}}^{2}_{ut}-\tilde{q}_{ut})
  - 
  \delta \lp
     \bar{\cl}^{L-1} \lp \partial V  V \rp (y^{n}_{\cdot})   
     \rp_{su}
     ( \tilde{\text{w}}^{2}_{ut} - \tilde{q}_{ut} )^{T}
   . 
\end{align*}
In the following steps we estimate the terms $E^{1}$, \dots, $E^{5}$ differently on small and large steps.

\noindent \textit{Step 2. Estimate over small and medium size steps.}  
We consider the intervals $[s_{j}, s_{j+1}]$ such that $s_{j}\in S_{0}\cup S_{1}$, with $S_{0}$ and $S_{1}$ defined by \eqref{eqn.s01}. 
In the following, we show   by induction  that for $s,u, t\in [s_{j}, s_{j+1}]$  the following   inequalities for 
        $  L=0,1,\dots, N$ are satisfied:
\begin{align}
\label{eqn.indxi}
&|\delta \Xi^{L}_{st}| \leq K^{L}_{1}  P^{L}_{s} \omega(s,t)^{1/p}, 
\
|\delta \Xi^{L}_{st} - \cl^{L} V(y^{n}_{s})    \delta x_{st}
 -
      \cl^{L-1} V(y^{n}_{s})   \delta b_{st}  | \leq K^{L}_{2}  P^{L}_{s} \omega(s,t)^{2/p}, 
      \\ 
   \label{eqn.indr}   
&      | R_{st}^{L}| \leq  K^{L}_{3}  P^{L}_{s} \omega (s,t)^{3/p} \, ,
      \qquad\quad
        | \delta R_{sut}^{L}| \leq  K^{L}_{4} P^{L}_{s} \omega (s,t)^{3/p}
, 
\end{align}
where we recall that $K_{1}^{L}$, \dots,  $K_{4}^{L}$ are defined in \eqref{eqn.k12} and \eqref{eqn.k34}, and where $P^{L}_{s}$ is introduced in Definition \ref{def.p}. 
Namely, 
suppose that \eref{eqn.indxi}-\eref{eqn.indr} hold for $s,u,t \in [s_{j}, v]$. In the following, we are going to show that   \eref{eqn.indxi}-\eref{eqn.indr} also holds on $[s_{j}, v +\Delta]$.   

To this aim, it is enough to estimate $\delta R^{L}_{sut}$ for $s,u\in [s_{j}, v]$ and $t\in [v,v+\Delta]$. For such a tuple $(s,u,t)$, we apply   Lemma \ref{eqn.dlpvv} to $E_{5}$  and Lemma  \ref{eqn.dlv} to $(E_{1}+E_{3})$ and $(E_{2}+E_{4})$.  We obtain  
\begin{align}\label{eqn.drbd}
|\delta R^{L}_{sut}| \leq
&
 (C^{8}_{V,L} P^{L}_{s}+C^{8}_{V,L-1} P^{L-1}_{s})     
 \omega(s,t)^{3/p} 
 +
 4(C^{6}_{L,V}+C^{7}_{L,V}) P^{L}_{s}    
 \omega(s,t)^{3/p} 
\nonumber
 \\
 \leq  &
 K^{L}_{4} P^{L}_{s} \omega(s,t)^{3/p}
 .  
\end{align}
From \eqref{eqn.drbd}, one can thus 
  complete the proof of the second inequality  in \eref{eqn.indr} by induction. The first inequality  in \eref{eqn.indr} is then obtained from the second one by a direct application of   Lemma \ref{lem2.4}.   

We now turn our attention to the proof of \eqref{eqn.indxi}. 
By applying relation \eref{eqn.indr},  \eref{eqn.lbnbd} and~\eref{eqn.ltnbd} to \eref{eqn.r} and taking into account the condition $\omega (s,t)^{1/p}\leq 1/K^{L}_{3}$, we obtain 
\begin{multline}\label{eqn.dxbd}
|\delta \Xi^{L}_{st} - \cl^{L} V(y^{n}_{s})    \delta x_{st}
 -
      \cl^{L-1} V(y^{n}_{s})   \delta b_{st}  |  \\
       \leq 
     ( C^{2}_{L,V}+C^{2}_{L-1,V}+ C^{3}_{L,V}+C^{3}_{L-1,V} +  1 )
    P^{L}_{s} 
     \omega (s,t)^{2/p}
  .  
\end{multline}
This concludes the proof of the second relation in \eref{eqn.indxi}. In order to show the first relation we apply \eref{eqn.lnbd} to  \eref{eqn.dxbd} and take into account the assumption that $ \omega (s,t)^{1/p}\leq 1/K^{L}_{2} $. We get 
\begin{align*}
|\delta \Xi^{L}_{st}  
    |   \leq  
 (C^{1}_{L,V}+C^{1}_{L-1,V}+1) P^{L}_{s} 
     \omega (s,t)^{1/p}
    . 
\end{align*}
This completes the proof of \eref{eqn.indxi}-\eref{eqn.indr} for $s,u,t \in [s_{j}, v +\Delta]$, under the hypothesis $s_{j}\in \cs_{0}\cup \cs_{1}$. 
Our induction procedure is thus achieved.

\noindent \textit{Step 3. Estimate over large size steps.}\quad
For large size steps, we will use a cruder estimate.   
Namely, when  $s_{j}$ sits in the set $S_{2}$ defined by \eqref{eqn.cs2},  we have $s_{j+1}=s_{j}+\Delta$.  It   follows  from  equation~\eqref{eqn.dxil}   that
\begin{align}\label{eqn.dzdelta}
|\delta \Xi^{L}_{s_{j}  s_{j+1}}|
=|\delta \Xi^{L}_{s_{j}, \, s_{j}+\Delta}|
\leq ( C^{1}_{L,V}+C^{1}_{L-1,V}+C^{2}_{L,V}+C^{3}_{L-1,V} ) P_{s_{j}}^{L}\lp   |\text{w}^{1}_{s_{j}  s_{j+1}}|+  \Delta^{2H} \rp  ,
\end{align}
where $\text{w}^{1}_{st}=\delta \text{w}_{st}$, 
and thus with Definition \ref{def.p} in mind we simply get 
\begin{align}\label{eqn.zdelta}
|  \Xi^{L}_{  s_{j+1}}|\leq P^{L}_{s_{j}} \lp K  |\text{w}^{1}_{s_{j}  s_{j+1}}|+K  \Delta^{2H} +1\rp .
\end{align}

\noindent \textit{Step 4. Conclusion.}\quad We first  derive the uniform upper-bound for $ \Xi^{L}$. That is, for  $t\in \ll s_{j},s_{j+1}\rr$ we write
\begin{align}\label{eqn.zs0}
|\Xi^{L}_{t}| \leq |\delta \Xi^{L}_{s_{j}t}|+|\Xi^{L}_{s_{j}}| . 
\end{align}
 Hence in the case   $s_{j}\in S_{0}\cup S_{1}$,   applying \eref{eqn.indxi} to \eref{eqn.zs0} we have 
 \begin{align*}
|\Xi^{L}_{t}| \leq  P^{L}_{s_{j}} \cdot \lp 1+K^{L}_{1} \omega(s_{j}, t)^{1/p} \rp. 
\end{align*}
Moreover,  in the case that $s_{j}\in S_{2} $, relation \eref{eqn.zdelta} implies that
  \begin{align*}
|\Xi^{L}_{t}| \leq  P^{L}_{s_{j}}  ( K  |\text{w}^{1}_{s_{j}  s_{j+1}}|+K  \Delta^{2H} +1 ). 
\end{align*}
 Iterating the above two estimates, we end up with 
 \begin{align}\label{eqn.zbd}
|\Xi^{L}_{t}| \leq K\cdot (\cm_{0}\cdot \cm_{1}\cdot \cm_{2})^{L} \, ,
\qquad \text{for all }
t\in \ll 0,T\rr,  
\end{align}
which is our desired uniform bound.

 We turn to the estimate of the increments of $\Xi^{L}$. 
 We first write
 \begin{align*}
|\delta \Xi^{L}_{st}| \leq \sum_{s\leq s_{j}<t} |\delta \Xi^{L}_{s\vee s_{j}, t\wedge s_{j+1}}|. 
\end{align*}
We apply  the increment inequalities \eref{eqn.indxi} for small sized steps  and \eref{eqn.dzdelta} for large sized steps. We also take   into account the uniform  estimate \eref{eqn.zbd}.  We then obtain
\begin{align}\label{eqn.dxig}
|\delta \Xi^{L}_{st}| \leq 
K\lp  \sum_{s_{j}\in S_{0}\cup S_{1}} \omega(s_{j}, s_{j+1})^{1/p} +
K\sum_{s_{j}\in S_{2}} \lp
 |\text{w}^{1}_{s_{j}  s_{j+1}}|+K  \Delta^{2H}
 +1
 \rp
  \rp  (\cm_{0}\cdot \cm_{1}\cdot \cm_{2})^{L}.
\end{align}
In the right-hand side of \eqref{eqn.dxig}, bounding each $\omega(s_{i}, s_{i+1})$ by $\omega (s,t)$ for every $s_{j}  \in S_{0}\cup S_{1} $ we get 
\begin{align}\label{eqn.sumo}
\sum_{s_{j}\in S_{0}\cup S_{1}} \omega(s_{j}, s_{j+1})^{1/p}  \leq \omega (s,t)^{1/p} |S_{0}\cup S_{1}|. 
\end{align}
In addition, recall that the control $\omega$ is defined by \eqref{eqn.control}, which includes the term $\|\omega\|_{p\text{-var}}^{p}$. Hence if $s_{j}\in S_{2}$ (that is $\omega(s_{j},s_{j+1})>\al$) and $\Delta$ is small enough, we have 
 \begin{align}\label{eqn.sumo2}
K\sum_{s_{j}\in S_{2}}  |\text{w}^{1}_{s_{j}  s_{j+1}}|+K  \Delta^{2H}
 \leq K \omega(s,t)^{1/p}  |S_{2}|. 
\end{align} 
Plugging \eqref{eqn.sumo} and \eqref{eqn.sumo2} into \eqref{eqn.dxig}, we obtain 
\begin{eqnarray}\label{eqn.dxibound}
|\delta \Xi_{st}^{L}| \leq K   \cdot \omega(s,t)^{1/p} |S_{0}\cup S_{1}\cup S_{2}| \cdot (\cm_{0}\cdot \cm_{1}\cdot \cm_{2})^{L} \, ,
\end{eqnarray}
for all $(s,t)\in \cs_{2}(\ll0,T\rr)$. The upper-bound   \eref{eqn.z.pbd} for $| \delta \Xi^{L}_{st}|$ is exactly \eqref{eqn.dxibound}.    
\end{proof}

 \subsection{Point-wise upper-bound estimate}\label{section.bound2}
 
 Theorem \ref{thm.bd} and Corollary \ref{cor.dy.bd} provide estimates in $p$-variation for the Malliavin derivatives of the Euler scheme $y^{n}$.
 In this subsection we use   similar  arguments in order    to derive a  pointwise estimate for the Malliavin derivatives of the form $Dy_{t}$ (recall that those $\ch$-valued derivatives have been defined in Section~\ref{subsection.d}).  
Notice that we have stated and proved the theorem below for the first two derivatives of $y^{n}$. However, the extension of this result to higher order derivatives is just a matter of cumbersome notation.     
 
 \begin{theorem}\label{thm.xirr}
Let the notations  in Theorem \ref{thm.bd} prevail.  Suppose that $V\in C_{b}^{4}$. 
  Take $r,r'\geq 0$ and let $k_{0},k_{0}'\in \NN$ be such  that $r\in (t_{k_{0}}, t_{k_{0}+1}] $ and $r'\in (t_{k_{0}'}, t_{k_{0}'+1}] $.
We define a Malliavin derivative vector $\xi^{n} $ as
\begin{equation}\label{d1}
\xi^{n} _{t}= (D_{r}y^{n}_{t} , D_{r}D_{r'}y^{n}_{t} ) := (\xi_{t}^{n,1}, \xi_{t}^{n,2})
\end{equation}
 Then  $\xi^{n,L} $, $L=1,2$ satisfies the iterative equation \eqref{eqn.xin} with $c_{L,i}=1$ and $\tilde{c}_{L,i}=0$. Furthermore,  we have the estimate 
\begin{align}\label{eqn.z.pbd1}
\|\xi^{n,1}\|_{p\tvr, \ll s,t\rr} +\| \xi^{n,2}\|_{p\tvr, \ll s,t\rr} \leq K \cdot \omega(s,t)^{1/p} |S_{0}\cup S_{1}\cup S_{2}| \cdot (\cm_{0}\cdot \cm_{1}\cdot \cm_{2})^{L} \, ,
\end{align}
for all $(s,t) \in \cs_{2}\ll 0,T \rr$, 
where $S_{i}$,
 $\cm_{i}$ are respectively  defined for $i=0,1,2$ in \eqref{eqn.s01}-\eqref{eqn.cs2} and~\eqref{eqn.ms}. Moreover, for
  $(s,t)\in \cs_{2}(\ll s_{j},s_{j+1} \rr)$ such that  $s_{j}\in S_{0}\cup S_{1}$ and $L=1,2$ we have 
\begin{eqnarray}\label{eqn.dxiestimate2}
|\delta \xi^{n,L}_{st} - \cl^{L} V(y^{n}_{s})    \delta x_{st}
   | \leq K    \omega(s,t)^{2/p} \cdot P^{L}_{s}.
\end{eqnarray}
In both estimates   
\eqref{eqn.z.pbd1} and \eqref{eqn.dxiestimate2}, $K$ is a constant independent of $r$, $r'$, $j$ and $n$. 
 
 \end{theorem}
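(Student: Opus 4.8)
The proof follows the two-stage scheme already used for Theorem~\ref{thm.bd}. In the first stage we identify the difference equation obeyed by the pointwise Malliavin derivatives $\xi^{n,1}=D_{r}y^{n}$ and $\xi^{n,2}=D_{r}D_{r'}y^{n}$; in the second stage we observe that, once this equation is known, the bounds~\eqref{eqn.z.pbd1}--\eqref{eqn.dxiestimate2} are obtained by transcribing the argument in the proof of Theorem~\ref{thm.bd} word for word, since that argument only uses the structure of~\eqref{eqn.xin} together with the auxiliary Lemmas~\ref{lem.dXi}--\ref{eqn.dlv}, all of which are insensitive to the values of the constants $c_{L,i}$ and $\tilde{c}_{L,i}$.

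\smallskip
\noindent\emph{Stage 1: the recursion for $\xi^{n,L}$.}\quad The plan is to differentiate the discrete equation~\eqref{eqn.euler2} with respect to $D_{r}$, using Lemma~\ref{lem.dfg} in its function-valued form (see the remark following that lemma) to expand $D_{r}[V(y^{n}_{t_{k}})]$ and $D_{r}[(\partial V_{j}\cdot V_{j})(y^{n}_{t_{k}})]$ into the tree sums $\cl^{1}$ and $\bar{\cl}^{1}$ with $c_{1,i}=1$. Since $y^{n}_{t_{k}}$ is a functional of the increments $\{\delta x_{t_{l}t_{l+1}}:t_{l}<t_{k}\}$ only, and since $r\in(t_{k_{0}},t_{k_{0}+1}]$, one has $D_{r}(\delta x_{t_{l}t_{l+1}})=0$ for $l\neq k_{0}$, while $D_{r}(\delta x_{t_{k_{0}}t_{k_{0}+1}})$ is a deterministic vector of norm $1$. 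Hence $\xi^{n,1}_{t_{k}}=0$ for $t_{k}\leq t_{k_{0}}$, at time $t_{k_{0}+1}$ there is a deterministic injection $\xi^{n,1}_{t_{k_{0}+1}}$, which is a function of $y^{n}_{t_{k_{0}}}$ bounded by $\|V\|_{\infty}$, and for $t_{k}\geq t_{k_{0}+1}$ the increment $\delta\xi^{n,1}_{t_{k}t_{k+1}}$ coincides with the right-hand side of~\eqref{eqn.xin} at $L=1$ with $c_{1,i}=1$ and with the $\delta b$--term and the $\tilde{\cl}$--term removed (equivalently $\tilde{c}_{1,i}=0$); this reflects the absence of any auxiliary noise here, in contrast with Lemma~\ref{lem.dhd}, where the constants $c_{L,i}$ and the copy $b$ were artefacts of a chaos-decomposition device. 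Iterating the same computation — differentiating the equation for $\xi^{n,1}$ by $D_{r}$ with $r'$ frozen, again via Lemma~\ref{lem.dfg} — produces the $L=2$ instance of~\eqref{eqn.xin} with $c_{2,i}=1$ and $\tilde{c}_{2,i}=0$, whose forcing is again a deterministic injection, this time supported on the interval $t_{(k_{0}\vee k_{0}')+1}$ and bounded in terms of $\|V\|_{C^{2}_{b}}$.

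\smallskip
\noindent\emph{Stage 2: the estimates.}\quad With the recursion for $\xi^{n,L}$ at hand, I would reproduce Steps~1--4 of the proof of Theorem~\ref{thm.bd}: introduce the remainder $R^{L}$ as in~\eqref{eqn.r} (with the $b$--terms and the $\tilde{\cl}$--terms set to zero), decompose $\delta R^{L}_{sut}=E^{1}_{sut}+\cdots+E^{5}_{sut}$, run the induction~\eqref{eqn.indxi}--\eqref{eqn.indr} over the small and medium steps $s_{j}\in S_{0}\cup S_{1}$ by means of Lemmas~\ref{lem.dXi}--\ref{eqn.dlv} and the discrete sewing Lemma~\ref{lem2.4}, and use the crude bound of Step~3 over the large steps $s_{j}\in S_{2}$. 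The auxiliary constants $C^{1}_{L,V},\dots,C^{8}_{V,L}$ and the threshold $\al$ of~\eqref{eqn.al} are given by the very same formulas, now with $c_{L,i}=1$ and $\tilde{c}_{L,i}=0$, so that~\eqref{eqn.z.pbd1} follows exactly as~\eqref{eqn.z.pbd} and~\eqref{eqn.dxiestimate2} exactly as part~(b) of Theorem~\ref{thm.bd}. The uniformity in $r$, $r'$, $j$ and $n$ comes from Stage~1: these parameters enter only through the initial value of the recursion, which is a column of $V(y^{n}_{t_{k_{0}}})$ when $L=1$ (resp.\ a bounded combination of derivatives of $V$ at $y^{n}_{t_{(k_{0}\vee k_{0}')}}$ when $L=2$), hence controlled by $\|V\|_{C^{4}_{b}}$ alone, while all the constants appearing in Stage~2 depend only on $\|V\|_{C^{4}_{b}}$, $p$ and $T$.

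\smallskip
The delicate point is Stage~1: one has to justify the chain rule for the kernel-type derivatives $D_{r}y^{n}$ and $D_{r}D_{r'}y^{n}$, checking in particular that the structural constants are exactly $1$ and $0$ and keeping track of the interval in which the injection occurs. For $L=2$ and $r\neq r'$ the quadratic term of~\eqref{eqn.xin} is built from the tensor $D_{r}y^{n}\otimes D_{r'}y^{n}$ rather than from $\xi^{n,1}\otimes\xi^{n,1}$; since both $D_{r}y^{n}$ and $D_{r'}y^{n}$ satisfy the $L=1$ bound uniformly in the time point, this modification does not affect any of the estimates. Once Stage~1 is settled, Stage~2 is a routine repetition of the proof of Theorem~\ref{thm.bd}.
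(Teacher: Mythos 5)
Your proposal is correct and follows essentially the same route as the paper: derive the recursion for $\xi^{n,1}$ and $\xi^{n,2}$ by the case analysis on the location of $r$ (zero before $t_{k_0}$, a deterministic injection $\sum_j V_j(y^n_{t_{k_0}})$ at $t_{k_0+1}$, the linear difference equation afterwards), identify it as \eqref{eqn.xin} with $c_{L,i}=1$, $\tilde c_{L,i}=0$, and then invoke the machinery of Theorem \ref{thm.bd}. The only cosmetic difference is that the paper applies Theorem \ref{thm.bd} as a black box (Definition \ref{def.xi} already allows arbitrary constants $c,\tilde c$) rather than rerunning its Steps 1--4, and your explicit remark about the $L=2$ quadratic term being $D_r y^n\otimes D_{r'}y^n$ rather than $\xi^{n,1}\otimes\xi^{n,1}$ is a point the paper passes over silently.
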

 
 \begin{proof}
  Recall that $y^{n}$ is defined  in \eref{eqn.euler2}. Let us first  derive the iterative equation for the derivatives of $y^{n}$. 
Recall that $r\in (t_{k_{0}}, t_{k_{0}+1}]$.  
We can divide the differentiation of $\delta y^{n}_{t_{k}t_{k+1}}$ in three cases. 
\begin{enumerate}[wide, labelwidth=!, labelindent=0pt, label=\emph{(\roman*)}]
\setlength\itemsep{.1in}

\item
If $k_{0}>k$, then $r>t_{k+1}$. Therefore, since $y_{t_{k}} \in \cf_{t_{k}}$ we get
$D_{r} [ \delta y^{n}_{ t_{k} t_{k+1}}]=0$.

\item
If $k_{0}=k$,  
 then $t_{k}<r\leq t_{k+1}$. Moreover it is readily checked from equation~\eqref{e.df.finite} that $D_{r} [\delta x_{t_{k}t_{k+1}}]=\mathbf{1}_{[t_{k}, t_{k+1}]} (r)$. Hence we have $\mathbf{1}_{(t_{k},t_{k+1}]} (r)=1$ almost everywhere  and differentiating \eref{eqn.euler2} on both sides we obtain 
\begin{align*}
   D_{r} [\delta y^{n }_{t_{k}t_{k+1}}] =  \sum_{j=1}^{d}V_{j}(y^{n}_{t_{k}}) \mathbf{1}_{[t_{k}, t_{k+1}]} (r) =\sum_{j=1}^{d} V_{j}(y^{n}_{t_{k_{0}}}) \equiv a_{1}  .
\end{align*}

\item
If $k_{0}<k$, then we can  differentiate both sides of \eref{eqn.euler2}.  We obtain  the equation:
 \begin{eqnarray*}
\delta D_{r}   y^{n}_{t_{k}t_{k+1}}  =   \langle \partial V(y^{n}_{t_{k}}) , D_{r} y^{n}_{t_{k} } \rangle \delta x_{t_{k}t_{k+1}} + \frac12  \sum_{j=1}^{d}  \langle \partial (\partial V_{j} V_{j}) (y^{n}_{t_{k}}),   D_{r} y^{n}_{t_{k}} 
\rangle \Delta^{2H}  . 
\end{eqnarray*}
\end{enumerate}
Gathering item  (i), (ii) and (iii) above, and recalling that we have set $a_{1}=\sum_{j=1}^{d} V_{j}(y^{n}_{t_{k_{0}}})$, we get the following expression for $t\in \ll0,T\rr$, $r\leq t_{k}$ and $\xi^{n,1}_{t}=D_{r}y^{n}_{t}$:  
\begin{eqnarray}\label{eqn.xi1}
\delta\xi_{t_{k}t_{k+1}}^{n,1} = \langle \partial V(y^{n}_{t_{k}}) , \xi_{t_{k}}^{n,1} \rangle \delta x_{t_{k}t_{k+1}} + \frac12   \sum_{j=1}^{d}  \langle \partial (\partial V_{j} V_{j}) (y^{n}_{t_{k}}),    \xi_{t_{k}}^{n,1} 
\rangle \Delta^{2H} .  
\end{eqnarray}
Also notice that we have obtained   $ \xi_{t }^{n,1}=0$ if $r>t$.  In particular,  it is clear that $\xi^{n,1}$ satisfies the iteration equation in Definition \ref{def.xi} with   $c_{L,i}=1$ and $\tilde{c}_{L,i}=0$, with $L=1$ and initial time $t_{0}=t_{k_{0}}$. Precisely,    
 we have 
\begin{eqnarray}\label{eqn.xi1i}
\delta\xi^{n,1}_{t_{k}t_{k+1}} =   \cl^{1}_{\xi}V(y^{n}_{t_{k}}) \delta x_{t_{k}t_{k+1}} +  \frac12  \sum_{j=1}^{d}  \bar{\cl}^{1}_{\xi}  (\partial V_{j} V_{j}) (y^{n}_{t_{k}})  \Delta^{2H}  . 
\end{eqnarray}
Therefore, a direct application of   Theorem \ref{thm.bd} yields   the estimate \eqref{eqn.z.pbd1}   for $\xi^{n,1}$. 

 In a similar way we can show      that  $\xi^{n,2}_{t} = D_{r}D_{r'}y^{n}_{t}$ satisfies the iterative equation in Definition~\ref{def.xi}  with   $c_{L,i}=1$ and $\tilde{c}_{L,i}=0$, with $L=2$ and initial time $t_{0}=t_{k_{0}}\vee t_{k_{0}'}$. Indeed, a straightforward computation shows that  
 \begin{eqnarray*}
D_{r'}\xi^{n,1}_{t} = \xi^{n,2}_{t} ,
\qquad
D_{r'}\cl^{1}_{\xi}V(y^{n}_{t_{k}})=\cl^{2}_{\xi}V(y^{n}_{t_{k}}), 
\qquad
D_{r'} \bar{\cl}^{1}_{\xi}  (\partial V_{j} V_{j}) (y^{n}_{t_{k}})= \bar{\cl}^{2}_{\xi}  (\partial V_{j} V_{j}) (y^{n}_{t_{k}}). 
\end{eqnarray*}
Let   $r\in [t_{k_{0}}, t_{k_{0}+1})$ and $r'\in [t_{k_{0}'}, t_{k_{0}'+1})$. Then, by differentiating both sides of \eqref{eqn.xi1i} by $D_{r'}$ and taking into acount the above three relations  we get for all $t\geq t_{0}$ 
 \begin{eqnarray*}
\xi^{n,2}_{t} = a_{2} + \sum_{r\vee r'\leq t_{k}<t} \cl^{2}_{\xi}V(y^{n}_{t_{k}}) \delta x_{t_{k}t_{k+1}} +  \frac12 \sum_{r\vee r'\leq t_{k} <t} \sum_{j=1}^{d}  \bar{\cl}^{2}_{\xi}  (\partial V_{j} V_{j}) (y^{n}_{t_{k}})  \Delta^{2H}     ,
\end{eqnarray*}
where   $a_{2}$  is the initial value of the iterative  equation defined as follows 
 $$a_{2}=\mathbf{1}_{\{ k_{0}\geq k_{0}' \}}\cdot \sum_{j=1}^{d} \langle \partial V_{j}(y^{n}_{t_{k_{0}}}) , D_{r'}y^{n}_{t_{k_{0}}} \rangle  +\mathbf{1}_{\{ k_{0}'\geq k_{0}  \}}\cdot\sum_{j=1}^{d} \langle \partial 
V_{j}(y^{n}_{t_{k_{0}'}}) , D_{r}y^{n}_{t_{k_{0}'}} \rangle . $$ 
We conclude that  the estimate  \eqref{eqn.z.pbd1} also holds   for $\xi^{n,2}$. The proof is complete. 
\end{proof}

   \section*{Acknowledgments}
Yanghui Liu is supported by the PSC-CUNY Award 66385-00 54.


\end{document}